\numberwithin{equation}{section}
\newtheorem{theorem}{Theorem}[section]
\newtheorem{corollary}[theorem]{Corollary}
\newtheorem{lemma}[theorem]{Lemma}
\newtheorem{proposition}[theorem]{Proposition}
\newtheorem{definition}[theorem]{Definition}
\newtheorem{example}[theorem]{Example}
\newtheorem{remark}[theorem]{Remark}
\begin{document}
% %----------------------------------------------------------------------
%%% ----------------------------------------------------------------------
%%% ----------------------------------------------------------------------

\title[Quadratic and Symplectic structures on 3-(Hom)-$\rho$-Lie algebras]
 {Quadratic and Symplectic structures on 3-(Hom)-$\rho$-Lie algebras}

%%% ----------------------------------------------------------------------
%%% ----------------------------------------------------------------------
%%% ----------------------------------------------------------------------
\bibliographystyle{amsplain}
%%% ----------------------------------------------------------------------
%%% ----------------------------------------------------------------------
%%% ----------------------------------------------------------------------

\author[]{Zahra Bagheri ${}^{1}$ and Esmaeil Peyghan ${}^{2, *}$\\
${}^{1, 2}$ Department of Mathematics, Faculty of Science, Arak University,\\
	Arak, 38156-8-8349, Iran\\
${}^{*} \lowercase{Correspondence: e-peyghan@araku.ac.ir}$
}

\keywords{3-$\rho$-Lie algebra, 3-pre-$\rho$-Lie algebra, quadratic structure, representation, symplectic structure.}

\subjclass[2010]{15A63, 17B10, 16W25, 17B40, 17B70, 17B75, 37J10.}

%%% ----------------------------------------------------------------------
%%% ----------------------------------------------------------------------
%%% ----------------------------------------------------------------------

\begin{abstract}
Our purpose in this paper is the generalization of the notions of quadratic and symplectic structures to the case of 3-(Hom)-$\rho$-Lie algebras. We describe some properties of them by expressing the related lemmas and theorems. Also, we introduce the concept of 3-pre-(Hom)-$\rho$-Lie algebras and define their representation.
\end{abstract}

%%% ----------------------------------------------------------------------
%%% ----------------------------------------------------------------------
%%% ----------------------------------------------------------------------
\maketitle
%%% ----------------------------------------------------------------------
%%% ----------------------------------------------------------------------
%%% ----------------------------------------------------------------------

%%%%%%%%%%%%%%%%%%%%%%%%%%%%%%%%%%%%%%%%%%%%%%%%%%%%%%%%%%%%%%%%%%%%%%%%%%%%%%%%%%%%%%

%%%%%%%%%%%%%%%%%%%%%%%%%%%%%%%%%%%%%%%%%%%%%%%%%%%%%%%%%%%%%%%%%%%%%%%%%%%%%%%%%%%%%%

%%% ----------------------------------------------------------------------
%%% ----------------------------------------------------------------------
%%% ----------------------------------------------------------------------
%%% ----------------------------------------------------------------------
%%% ----------------------------------------------------------------------

%\newpage

%%%%%%%%%%%%%%%%%%%%%%%%%%%%%%%%%%%%%%%%%%%%%%%%%%%%%%%%%%%%

%%%%%%%%%%%%%%%%%%%%%%%%%%%%%%%%%%%%%5
%%%%%%%%%%%%%%%%%%%%%%%%%%%%%%%%%%%%%

\section{Introduction} 
At the first, Hartwig, Larsson and Silvestrov in \cite{J D S} generalized Lie algebras and constructed a new class of Lie algebras in the name of Hom-Lie algebras. These algebras very soon were attracted considerable attentions and many algebraic and geometric structures were introduced on them.  In 1994, the other concept was emerged as "$\rho$-Lie algebra" or "Lie color algebra" by P.J. Bongaarts \cite{BP1} and then in 1998, Scheunert and Zhang introduced a new notion which is called a cohomology theory of Lie color algebras \cite{SZ}. Also, the notion of Hom-Lie color algebras, as an extension of Hom-Lie superalgebras to $G$-graded algebras, was introduced by Yuan \cite{LY}. These Lie algebras were applied by the researchers and in 2015, Abdaoui, Ammar and Makhlouf defined representations and a cohomology of Hom-Lie color algebras in \cite{AAM}. After two years, in 2017, B. Sun et al. studied some concepts such as $T^*$-extension and abelian extension on Hom-Lie color algebras \cite{BLY}.

The notion of pre-Lie algebra was given by Gerstenhaber in \cite{MG} in the
study of deformations and the cohomology theory of associative algebras. These algebras in the other research articles appear with titles such as left-symmetric algebras or right-symmetric algebras. Also, the notion of  Hom-pre-Lie algebra was introduced in \cite{A S}. This kind of algebras in the construction of Hom-Lie 2-algebras has an important role (see \cite{Y C}). Representation of a Hom-pre-Lie algebra was constructed in \cite{SL}, in the study of Hom-pre-Lie bialgebras. 
The geometrization and the bialgebra theory of these algebras was
studied in \cite{SL, ZY}. 
For the commutator bracket $[.,.]_c$, every Hom-pre-Lie algebra $(A,\cdot,\alpha)$ gives rise to a Hom-Lie algebra $(A, [.,.]_c, \alpha)$. 
This Lie algebra is called the subadjacent Hom-Lie algebra and denoted by $A^c$.
Actually, we can say the Hom-pre-Lie algebras have a interconnected with the Hom-Lie algebras.

The concept of $n$-Lie algebra was introduced by Filippov in 1985. These Lie algebras are known by different names such as Filippov algebras, Nambu-Lie algebras, Lie $n$-algebras. The special case of these algebras are 3-Lie algebras and in the Hom case 3-Hom-Lie algebras, which extracted from 3-Lie algebras base on Hom-Lie algebras. These algebras were defined and checked by H. Ataguema et al. in \cite{HAS}. Y. Liu et al. took into consideration these algebras and  studied the representations and module-extensions on them in \cite{LCM}. Later, the new notions such as 3-Lie color algebra, 3-Hom-Lie color algebra and 3-pre-Lie algebra were constructed and studied in \cite{ CLY,PZCG,T}. In 1996, Y. Daletskii and V. Kushnirevich introduced the notion of $n$-Lie superalgebra, as a natural generalization of $n$-Lie algebra (\cite{DK}). When $n=3$, the 3-Lie superalgebras are a particular case of 3-$\rho$-Lie algebras when the abelian group is $\mathbb{Z}_2$ (see for more  details \cite{VA, VA1, GCS}).

Ternary Lie algebras have vast applications in various realms of modern physics and mechanics such as Nambu, field and  Hamiltonian theories. For example,  in Nambu consideration for a case that the algebra of functions in a manifold $M$  is ternary, one can construct a bracket defined by  a trivector on $M$ that is famous as  Nambu bracket in literature. In this way, the dynamics of $M$ is controlled by two Hamiltonians in return of classical case. Indeed, $3$-ary bracket in ternary Lie algebra is a suitable alternative for Poisson alternative in Hamiltonian mechanics. To study how Ternary Lie algebras can model this frameworks, see \cite{Gautheron0, Nambu0, TL}. Nahm equations describing some facts in string and superstring theories can be formulized using ternary Lie algebras also. In \cite{BH} it is described the benefits of replacing ternary Lie algebras instead of using simple Lie algebras in lifted Nahm equations and in \cite{JN} the authors for the lifted Nahm equations replaced the Lie algebra in the Nahm equation by a 3-Lie algebra. In the framework of Bagger-Lambert Gustavsson model of multiple $M2$ brains, it is possible to revisit the super-symmetric alternative of world-volume theory of $M$-theory membrane in special cases such $N = 2$.  See \cite{JN, BL1, HH, PP0} for more details in the way.

In 2006, quadratic Lie algebras were discussed by I. Bajo et al. in \cite{BBM}. A quadratic Lie algebra comes equipped with a non-degenerate, symmetric and invariant bilinear form. Quadratic Lie algebras are important  in Mathematics and Physics specially in conformal field theory \cite{MS}. S. Benayadi and A. Makhlouf, in 2010 studied the quadratic Hom-Lie algebras \cite{BM} as a generalization of the classical case. This process continued until a generalization to the case of quadratic
(even quadratic) color Lie algebras was obtained in \cite{WZ}. Also, quadratic color Hom-Lie algebras were introduced and studied by F. Ammar et al. in \cite{FIS}. Also, the quadratic-algebraic structures have been studied on the class of $\Gamma$-graded Lie algebras specially in the case where $\Gamma=\mathbb{Z}_2$. These algebras are called homogeneous (even or odd) quadratic Lie superalgebras (see \cite{ABB, ABBB, HS, SZ}).

In this paper, we discuss the quadratic and symplectic structures on 3-(Hom)-$\rho$-Lie algebras and study the representations of them. The other aim is to introduce the notion of 3-pre-(Hom)-$\rho$-Lie algebras and check their phase spaces. Also, we study the relationship between the 3-pre-(Hom)-$\rho$-Lie algebras and 3-(Hom)-$\rho$-Lie algebras.

This paper is arranged as follows: In Section 2, we give a brief introduction to 3-$\rho$-Lie algebras and study the quadratic and symplectic structures on 3-$\rho$-Lie algebras. Also, we study the metrics and derivations on 3-$\rho$-Lie algebras and we explore the relationship between the symplectic structure and the invertible derivation which is antisymmetric with respect to a metric. Section 3 is devoted to study of representation theory on 3-$\rho$-Lie algebras and some classiflcation results will be given. In Section 4, we introduce the notion of 3-pre-$\rho$-Lie algebras and discuss some basic properties of 3-pre-$\rho$-Lie algebras including representation theory and some classiflcation results.
Also, we define $\rho$-$\mathcal{O}$-operator associated to the representation $(V,\mu)$ of a 3-$\rho$-Lie algebra  and construct a 3-$\rho$-pre-Lie algebra structure on the representation $V$ by a $\rho$-$\mathcal{O}$-operator and bilinear map $\mu$. This section also contains the notion of a phase space of a 3-$\rho$-Lie algebra and it will be shown that a 3-$\rho$-Lie algebra has a phase space if and only if it is sub-adjacent to a 3-$\rho$-pre-Lie algebra.
In Section 5, we study the constructions of Section 2, such as quadratic, symplectic structures, 3-associative Lie algebras in the case of 3-Hom-$\rho$-lie algebras. The representation theory of 3-Hom-$\rho$-lie algebras is also included in this section. Section 6 is devoted to the 3-pre-Hom-$\rho$-Lie algebras, which are studied similar to the classical case in Section 4. Actually, we give the similar results to Section 4 in the case of 3-pre-Hom-$\rho$-Lie algebras. 
%%%%%%%%%%%%%%%%%%%%%%%%%%%%%%%%%%%%%%%%%%%%%%%%%%%%%%%%%%%%%%%%%%%%%%%%%%%%%%%%%%%%%%

%%% -----------------------------------------------------------------------
%%% ----------------------------------------------------------------------
%%% ----------------------------------------------------------------------
%%% ----------------------------------------------------------------------
%%% ----------------------------------------------------------------------
%%% ----------------------------------------------------------------------

%\newpage
\section{quadratic 3-$\rho$-lie algebras}
We begin this section by describing the class of $\rho$-Lie algebras, which includes the special multiplication that is known as bracket and a two-cycle. This Lie algebras is called 3-$\rho$-Lie algebra. We summarize some definitions and introduce the notions such as quadratic structure, symplectic structure $\omega$, metric $\varphi$, derivation $D$ and $\varphi$-antisymmetric derivation $D$ on 3-$\rho$-Lie algebras and show that a symplectic structure $\omega$ may be defined on a 3-$\rho$-Lie algebra if and only if there exists a $\varphi$-antisymmetric invertible derivation $D$. Also, We give a brief exposition of 3-associative algebras and give the constructions of direct product 3-$\rho$-Lie algebra structures from 3-associative algebras.\\

Let us consider $(G, +)$ as an abelian group over a field $\mathbb{K}$ ($\mathbb{K} = \mathbb{R}$ or $\mathbb{K} = \mathbb{C}$). A map
$\rho:G\times G\longrightarrow \mathbb{K}^{\star}$ with the properties
\begin{align}
&\rho(a,b) =\rho(b,a)^{-1},\quad a,b\in G,\\
&\rho(a+b,c) =\rho(a,c)\rho(b,c),\quad a,b,c\in G,
\end{align}
is called a two-cycle. From two above properties we can deduce $\rho(a,b)\neq 0$, $\rho(0,b)=1$ and $\rho(c,c)=\pm 1$ for all $a,b,c\in G$ , $c\neq 0$. Note that if $\mathcal{B}$ is a $G$-graded vector space ($\mathcal{B}=\oplus_{a\in G}\mathcal{B}_a$), an element $f\in \mathcal{B}_a$ is called homogeneous of $G$-degree $a$. We denote by $Hg(\mathcal{B} )$ the set of homogeneous elements in $\mathcal{B}$ and by $|f|$ the $G$-degree of $f\in Hg(\mathcal{B} )$

 A $\rho$-Lie algebra is a triple $(\mathcal{B}, [.,.]_{\mathcal{B}},\rho)$, where 
 \begin{enumerate}
 \item[i.]
 $\mathcal{B} $ is a $G$-graded vector space,
 \item[ii.]
 $\rho$ is a two-cycle,
 \item[iii.]
$[.,.]_{\mathcal{B} }$ is a bilinear map on $\mathcal{B}$ such that
\begin{align*}
&\bullet |[f,g]_{\rho}|= | f|+| g|,\\
&\bullet [f, g]_{\rho} = -\rho(|f|,|g|)[g, f]_{\rho},\\
	&\bullet \rho(|h|,|f|)[f, [g, h]_{\rho}]_{\rho}+\rho(|g|,|h|)[h, [f, g]_{\rho}]_{\rho}+\rho(|f|,|g|)[g, [h, f]_{\rho}]_{\rho} = 0.
	\end{align*} 
\end{enumerate}
	The third condition is equivalent to 
	$$[f, [g, h]_{\rho}]_{\rho}=[[f, g]_{\rho},h]_{\rho}+\rho(|f|,|g|)[g, [f, h]_{\rho}]_{\rho}.$$
In the next, for simplicity we use $\rho(f,g)$ instead of $\rho(|f|,|g|)$.
\begin{definition}
A triple $ (\mathcal{B}, [.,.,.]_{\mathcal{B} }, \rho)$ is called a 3-$\rho$-Lie algebra if $\mathcal{B}$ is a $G$-graded vector space, 
$[.,.,.]_{\mathcal{B} }: \mathcal{B} \times \mathcal{B}\times \mathcal{B} \longrightarrow \mathcal{B}$ is a 3-linear map,
$\rho$ is a two-cycle and moreover the following conditions are satisfied
\begin{align*} 
&(1)~~|[f_1,f_2,f_3]_{\mathcal{B} }|=|f_1|+ |f_2| +|f_3|,\\
&(2)~~[f_1,f_2,[g_1,g_2,g_3]_{\mathcal{B} }]_{\mathcal{B} }=[[f_1,f_2,g_1]_{\mathcal{B} }, g_2, g_3]_{\mathcal{B} }+\rho(f_1+f_2,g_1)[g_1,[f_1,f_2,g_2]_{\mathcal{B} },g_3]_{\mathcal{B} }\\
&\ \ \ \ \ \ \ \ \ \ \ \ \ \ \ \ \ \ \ \ \ \ \ \ \ \ \ \ \ \ \ +\rho(f_1+f_2, g_1+g_2)[g_1,g_2,[f_1,f_2,g_3]_{\mathcal{B} }]_{\mathcal{B} },~~ \text{($\rho$-fundamental identity)}\\
& (3)~~ [f_1,f_2,f_3]_{\mathcal{B}}=-\rho(f_1,f_2)[f_2,f_1,f_3]_{\mathcal{B}}=-\rho(f_2,f_3)[f_1,f_3,f_2]_{\mathcal{B}}.~~ (\text{$\rho$-skew-symmetry property})
\end{align*}
We denote the 3-$\rho$-Lie algebra $ (\mathcal{B}, [.,.,.]_{\mathcal{B} }, \rho)$ by $\mathcal{B}_{\rho}$.\\
\end{definition}
\begin{example}
One of the most important case of 3-$\rho$-Lie algebras are 3-ary Lie superalgebras when abelian group $G$ is $\mathbb{Z}_2$.
\end{example}
In the following, we extend to 3-$\rho$-Lie algebras the study of quadratic color Hom-Lie algebras introduced in \cite{FIS} and in the next of this paper we investigate Hom version. We apply a non-degenerate bilinear form on 3-$\rho$-Lie algebra $\mathcal{B}_{\rho}$ to define a quadratic structure.
\begin{definition}
Let $\chi:\mathcal{B}_{\rho}\times \mathcal{B}_{\rho}\longrightarrow \mathbb{K}$ be a non-degenerate bilinear form on $\mathcal{B}_{\rho}$. Then $\chi$ is called a quadratic structure of $\mathcal{B}_{\rho}$ if, for any $f_1,f_2,f_3,f_4\in Hg(\mathcal{B}_{\rho})$, the following conditions are satisfied
\begin{enumerate}
\item[i.]
$\chi(f_1, f_2) = \rho(f_1,f_2) \chi(f_2,f_1)\quad$ ( $\rho$-symmetric),
\item[ii.]
$\chi([f_1,f_2,f_3]_\mathcal{B},f_4) = \chi(f_1, [f_2,f_3,f_4]_\mathcal{B})\quad$ (invariant).
\end{enumerate}
\end{definition}
Also $(\mathcal{B},[.,.,.]_\mathcal{B},\rho, \chi)$ is said to be a quadratic 3-$\rho$-Lie algebra if $\chi$ is a quadratic structure of $\mathcal{B}$. We use the symbol $\mathcal{B}_{\rho,\chi}$ to represent any quadratic 3-$\rho$-Lie algebra $(\mathcal{B},[.,.,.]_\mathcal{B},\rho, \chi)$.
\begin{definition}
Let ${\rm End(\mathcal{B}_{\rho})}$ be the endomorphism algebra of $\mathcal{B}_{\rho}$.	A vector subspace ${\rm Cent(\mathcal{B}_{\rho})}$ of ${\rm End(\mathcal{B}_{\rho})}$ is defined by
	\begin{align}
	{\rm Cent(\mathcal{B}_{\rho})} &= \{\psi\in {\rm End(\mathcal{B}_{\rho})} : \psi([f, g, h]_\mathcal{B}) = [\psi(f), g, h]_\mathcal{B}= \rho(\psi,f)[f, \psi(g), h]_\mathcal{B}=\rho(\psi, f+g) [f, g, \psi(h)]_\mathcal{B}\}.\label{159}
	\end{align}
	${\rm Cent(\mathcal{B}_{\rho})}$ is a subalgebra of ${\rm End(\mathcal{B}_{\rho})}$ and it is called the centroid of $\mathcal{B}_{\rho}$.	
\end{definition}
In the following proposition, we construct 3-$\rho$-Lie algebras from a 3-$\rho$-Lie algebra and an even element in its centroid.
\begin{proposition}\label{as}
	Let $\psi \in {\rm Cent(\mathcal{B}_{\rho})}$ be an even element. If we define two multiplications $[\cdot,\cdot,\cdot]^{\psi}$ and $[\cdot,\cdot,\cdot]^{\psi}_{\psi}$ on $\mathcal{B}$ by 
	$$[f,g,h]^{\psi}=[\psi(f), g, h]_\mathcal{B},\quad [f,g,h]^{\psi}_{\psi}=[\psi(f), \psi(g), \psi(h)]_\mathcal{B},\quad \forall f, g, h \in \mathcal{B}_{\rho},$$
	then we have two 3-$\rho$-Lie algebras
	$(\mathcal{B}, [\cdot,\cdot,\cdot]^{\psi},\rho)$ and $(\mathcal{B}, [\cdot,\cdot,\cdot]_{\psi}^
	{\psi}, \rho)$.
\end{proposition}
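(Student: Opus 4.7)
The plan is to verify the three axioms from the definition of a 3-$\rho$-Lie algebra (grading compatibility, $\rho$-skew-symmetry, and the $\rho$-fundamental identity) for each of the two new brackets. The evenness assumption on $\psi$ is the key lubricant: since $|\psi|=0$, every factor $\rho(\psi,\cdot)$ collapses to $1$, and the centroid condition (\ref{159}) simplifies to
\begin{equation*}
\psi([f,g,h]_{\mathcal{B}})=[\psi(f),g,h]_{\mathcal{B}}=[f,\psi(g),h]_{\mathcal{B}}=[f,g,\psi(h)]_{\mathcal{B}}.
\end{equation*}
In particular, for $f,g,h\in Hg(\mathcal{B}_{\rho})$, the new brackets can be rewritten as
\begin{equation*}
[f,g,h]^{\psi}=\psi([f,g,h]_{\mathcal{B}}),\qquad [f,g,h]^{\psi}_{\psi}=\psi^{3}([f,g,h]_{\mathcal{B}}),
\end{equation*}
which will be the workhorse identity throughout.

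First I would dispatch the grading axiom: since $\psi$ is even it preserves degrees, so $|[f,g,h]^{\psi}|=|\psi([f,g,h]_{\mathcal{B}})|=|f|+|g|+|h|$, and likewise for the $\psi\psi$-bracket. Next I would check $\rho$-skew-symmetry. For the bracket $[\cdot,\cdot,\cdot]^{\psi}$, starting from $[\psi(f),g,h]_{\mathcal{B}}$ and using the $\rho$-skew-symmetry of the original bracket (noting $|\psi(f)|=|f|$), then shifting $\psi$ back onto the first slot by the centroid property, one obtains $-\rho(f,g)[\psi(g),f,h]_{\mathcal{B}}=-\rho(f,g)[g,f,h]^{\psi}$; the other skew identity is analogous. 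For $[\cdot,\cdot,\cdot]^{\psi}_{\psi}$, the three arguments all carry a $\psi$, so $\rho$-skew-symmetry is inherited directly from $[\cdot,\cdot,\cdot]_{\mathcal{B}}$.

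The $\rho$-fundamental identity is the most substantial step, but the rewriting above reduces it to the original fundamental identity. For the bracket $[\cdot,\cdot,\cdot]^{\psi}$, the nested expression $[f_1,f_2,[g_1,g_2,g_3]^{\psi}]^{\psi}$ equals $\psi\bigl([f_1,f_2,\psi([g_1,g_2,g_3]_{\mathcal{B}})]_{\mathcal{B}}\bigr)$; pulling $\psi$ out of the inner slot via the centroid property yields $\psi^{2}\bigl([f_1,f_2,[g_1,g_2,g_3]_{\mathcal{B}}]_{\mathcal{B}}\bigr)$. Applying the fundamental identity of $\mathcal{B}_{\rho}$ inside $\psi^{2}(\cdot)$ and then reassembling each summand in $[\cdot,\cdot,\cdot]^{\psi}$-form produces the right-hand side term by term; the scalars $\rho(f_1+f_2,g_1)$ and $\rho(f_1+f_2,g_1+g_2)$ are unaffected because $\psi$ is $\mathbb{K}$-linear and even. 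The same strategy works for $[\cdot,\cdot,\cdot]^{\psi}_{\psi}$: each side of the candidate identity, after the rewriting, becomes $\psi^{6}$ applied to the corresponding term of the original fundamental identity.

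The only real obstacle is bookkeeping — keeping track of which slot carries which copy of $\psi$ while moving it through the brackets using the centroid property, and making sure the $\rho$-scalars survive unchanged. Because $|\psi|=0$, however, none of the shifts produce extra $\rho(\psi,\cdot)$ factors, so the verification is essentially a direct pass of $\psi^{2}$ (respectively $\psi^{6}$) through the known fundamental identity of $\mathcal{B}_{\rho}$.
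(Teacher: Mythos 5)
Your proposal is correct and follows essentially the same route as the paper: both arguments exploit the evenness of $\psi$ to rewrite each term of the $\rho$-fundamental identity for the new bracket as $\psi^{2}$ (respectively a power of $\psi$) applied to the corresponding term of the original identity, so that the axioms are inherited from $\mathcal{B}_{\rho}$. Your write-up is in fact slightly more complete, since you also spell out the grading and $\rho$-skew-symmetry checks that the paper leaves implicit.
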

\begin{proof}
	The definition of the bracket $[.,.,.]^{\psi}$ and this fact that $\psi$ is an even element in ${\rm Cent(\mathcal{B}_{\rho})}$, lead to
	\begin{align*}
	&(a)~~[f_1, f_2, [g_1, g_2, g_3]^{\psi}]^{\psi}=\psi^2[f_1, f_2, [g_1, g_2, g_3]_\mathcal{B}]_\mathcal{B},\\
	&(b)~~[[f_1, f_2, g_1]^{\psi}, g_2, g_3]^{\psi}=\psi^2[[f_1, f_2, g_1]_\mathcal{B}, g_2, g_3]_\mathcal{B},\\
	&(c)~~\rho(f_1+f_2,g_1)[g_1,[f_1, f_2, g_2]^{\psi}, g_3]^{\psi}=\rho(f_1+f_2,g_1)\psi^2[g_1,[f_1, f_2, g_2]_\mathcal{B}, g_3]_\mathcal{B},\\
	&(d)~~\rho(f_1+f_2, g_1+g_2)[g_1, g_2, [f_1, f_2, g_3]^{\psi}]^{\psi}=\rho(f_1+f_2, g_1+g_2)\psi^2[g_1, g_2, [f_1, f_2, g_3]_\mathcal{B}]_\mathcal{B}.
	\end{align*}
	According to the above equalities and this fact that $\mathcal{B}_{\rho}$ is a 3-$\rho$-Lie algebra, it is easy to show that $[\cdot,\cdot,\cdot]^{\psi}$ is a 3-$\rho$-Lie algebra structure. Also, by the similar way, we can show that $(\mathcal{B}, [.,.,.]_{\psi}^{\psi}, \rho)$ is a 3-$\rho$-Lie algebra.
\end{proof}
\begin{proposition}\label{gh}
	Consider $\mathcal{B}_{\rho,\chi}$ and an even invertible $\chi$-symmetric element $\psi\in{\rm Cent(\mathcal{B}_{\rho})}$. If we define the even bilinear form $\chi_{\psi}$ by $\chi_{\psi}(f,g) = \chi(\psi(f), g)$, then we have two quadratic 3-$\rho$-Lie algebras $(\mathcal{B}, [.,.,.]^{\psi},\rho,\chi_{\psi})$ and $(\mathcal{B}, [.,.,.]_{\psi}^{\psi}, \rho, \chi_{\psi})$.
\end{proposition}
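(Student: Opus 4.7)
The plan is to observe that Proposition \ref{as} already provides the 3-$\rho$-Lie algebra structure for both brackets $[.,.,.]^\psi$ and $[.,.,.]^\psi_\psi$, so the task reduces to verifying the three defining properties of a quadratic structure for $\chi_\psi$: non-degeneracy, $\rho$-symmetry, and invariance with respect to each of the two brackets.

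For non-degeneracy, I would argue directly: if $\chi_\psi(f,g)=0$ for all $g$, then $\chi(\psi(f),g)=0$ for all $g$, so non-degeneracy of $\chi$ forces $\psi(f)=0$, and then invertibility of $\psi$ gives $f=0$. For $\rho$-symmetry, since $\psi$ is even we have $|\psi(f)|=|f|$, so $\rho(\psi(f),g)=\rho(f,g)$; combining this with the $\rho$-symmetry of $\chi$ and the assumption that $\psi$ is $\chi$-symmetric (i.e.\ $\chi(\psi(f),g)=\chi(f,\psi(g))$) gives
\[
\chi_\psi(f,g)=\chi(\psi(f),g)=\rho(f,g)\chi(g,\psi(f))=\rho(f,g)\chi(\psi(g),f)=\rho(f,g)\chi_\psi(g,f).
\]

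The main computational step is invariance. The key observation I will exploit is that, because $\psi$ is even, the defining relation \eqref{159} for the centroid collapses to
\[
\psi([f,g,h]_{\mathcal{B}})=[\psi(f),g,h]_{\mathcal{B}}=[f,\psi(g),h]_{\mathcal{B}}=[f,g,\psi(h)]_{\mathcal{B}},
\]
so $\psi$ can be moved freely into any slot of the bracket. For $[.,.,.]^\psi$ I would compute
\[
\chi_\psi([f_1,f_2,f_3]^\psi,f_4)=\chi(\psi([\psi(f_1),f_2,f_3]_{\mathcal{B}}),f_4)=\chi([\psi^2(f_1),f_2,f_3]_{\mathcal{B}},f_4),
\]
then use invariance of $\chi$ to pass $[\psi(f_2),f_3,f_4]_{\mathcal{B}}=\psi([f_2,f_3,f_4]_{\mathcal{B}})$ back via the $\chi$-symmetry of $\psi$, landing on $\chi(\psi(f_1),[\psi(f_2),f_3,f_4]_{\mathcal{B}})=\chi_\psi(f_1,[f_2,f_3,f_4]^\psi)$.

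The case of $[.,.,.]^\psi_\psi$ is handled by essentially the same manipulation, only iterated: starting from $\chi_\psi([f_1,f_2,f_3]^\psi_\psi,f_4)=\chi([\psi^2(f_1),\psi(f_2),\psi(f_3)]_{\mathcal{B}},f_4)$, I apply invariance of $\chi$ and the identity $[\psi(f_2),\psi(f_3),\psi(f_4)]_{\mathcal{B}}=\psi([\psi(f_2),\psi(f_3),f_4]_{\mathcal{B}})$ together with $\chi$-symmetry of $\psi$ to arrive at $\chi_\psi(f_1,[f_2,f_3,f_4]^\psi_\psi)$. I do not expect any genuine obstacle here; the only subtle point is bookkeeping — making sure one uses ``$\psi$ is even'' every time one moves $\psi$ through a bracket slot (to kill the $\rho(\psi,\cdot)$ factor in the centroid identity), and applies $\chi$-symmetry of $\psi$ at precisely the right moment to balance the number of copies of $\psi$ on each side of $\chi$.
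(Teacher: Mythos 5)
Your proposal is correct and follows essentially the same route as the paper's proof: both reduce everything to the base bracket and then shuffle copies of $\psi$ using the (even) centroid identity, the $\chi$-symmetry of $\psi$, and the invariance of $\chi$, differing only in the order in which these three moves are applied. You are somewhat more careful than the paper in noting explicitly that the $\rho$-symmetry of $\chi_\psi$ also uses the $\chi$-symmetry of $\psi$, and in writing out the $[\cdot,\cdot,\cdot]^{\psi}_{\psi}$ case that the paper leaves to the reader, but these are refinements rather than a different argument.
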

\begin{proof}
	$\chi_{\psi}$ is a non-degenerate and $\rho$-symmetric bilinear form, since $\psi$ is invertible and $\chi$ is a non-degenerate, $\rho$-symmetric bilinear form on $\mathcal{B}$. Then, it is enough to show that $\chi_{\psi}$ is invariant. For this, we have
	\begin{align*}
	\chi_{\psi}([f_1,f_2,f_3]^{\psi}, f_4)&=\chi(\psi[\psi(f_1),f_2,f_3]_A, f_4)=\chi([\psi(f_1),f_2,f_3], \psi(f_4))\\
	&=\chi(\psi(f_1),[f_2, f_3, \psi(f_4)]_A)=\chi_{\psi}(f_1,[f_2, f_3, f_4]^{\psi}).
	\end{align*}
	Also, we can show that $\chi_{\psi}([f_1,f_2,f_3]^{\psi}_{\psi}, f_4)=\chi_{\psi}(f_1,[f_2, f_3, f_4]^{\psi}_{\psi})$, where $f_1,f_2,f_3,f_4$ are homogeneous elements of $\mathcal{B}$.
\end{proof}
\begin{definition}
A 3-associative algebra is a pair $(g, \mu)$ consisting of a $G$-graded vector
space $g$ and an even 3-linear map $\mu : g\times g\times g\longrightarrow g$ (i.e. $\mu(g_a,g_b,g_c)\subset g_{a+b+c}$), such that for all $a,b,c,d,e\in g$
$$\mu(a,b, \mu(c,d,e)) = \mu(\mu(a,b,c), d,e).$$
If $\rho$ is a two-cycle on $G$ and the linear map $\mu$ is the $\rho$-symmetry with respect to the displacement of every two elements, the 3-associative algebra $(g, \mu)$ is called commutative and denoted by $g_{\rho,C}$.
\end{definition}
\begin{definition}
A quadratic 3-associative algebra is a 3-associative algebra $(g, \mu)$ equipped with a $\rho$-symmetric, invariant and non-degenerate bilinear form $\chi_g$. We denote a quadratic 3-associative algebra  $(g, \mu,\chi_g)$ and a commutative quadratic 3-associative algebra respectively by $g_{\chi_g}$ and  $g_{\rho,C,\chi_g}$.
\end{definition}
Note that if $A$ and $A^{\prime}$ are two $G$-graded vector spaces, then $A\otimes A^{\prime}$ is also a $G$-graded vector space such that $$A\otimes A^{\prime}=\oplus_{\gamma\in G}(A\otimes A^{\prime})_{\gamma},$$
where $(A\otimes A^{\prime})_{\gamma}=A_a\otimes A^{\prime}_{a^{\prime}}$ and so $f\otimes g\in(A\otimes A^{\prime})_{\gamma}$ has the degree $|f\otimes g|=\gamma=a+a^{\prime}$.\\

According to the above point, we have the following theorem, which gives us a quadratic 3-$\rho$-Lie algebra structure starting from a quadratic 3-$\rho$-Lie algebra and a quadratic commutative 3-associative algebra.
\begin{theorem}\label{a1}
Consider $\mathcal{B}_{\rho,\chi}$ and $g_{\rho,C,\chi_g}$. The tensor product $(E=\mathcal{B}\otimes g, [\cdot,\cdot,\cdot]_E, \rho, \chi_E)$ is a quadratic 3-$\rho$-Lie algebra, such that $[\cdot,\cdot,\cdot]_E$ and $\chi_E$ are given by
\begin{align}
[f\otimes a, g\otimes b, h\otimes c]_E &= \rho(a,g+h)\rho(b,h)[f,g,h]_\mathcal{B}\otimes\mu(a, b,c),\label{aaa}\\
\chi_E(f\otimes a, g\otimes b) &=\rho(a,g)\chi(f,g)\chi_g(a,b),\label{ccc}
\end{align}
for all $f,g,h\in Hg(\mathcal{B_{\rho,\chi}})$ and $ a, b,c \in Hg(g_{\rho,\chi})$.
\end{theorem}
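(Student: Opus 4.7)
The plan is to verify each of the defining conditions of a quadratic 3-$\rho$-Lie algebra on $E=\mathcal{B}\otimes g$ in turn, exploiting the fact that the $\rho$-cocycle factors packed into the definitions of $[\cdot,\cdot,\cdot]_E$ and $\chi_E$ are designed precisely to convert statements about $\mathcal{B}$ and $g$ into statements about $E$. First I would note that $E$ is a $G$-graded vector space via $|f\otimes a|=|f|+|a|$, so that $[\cdot,\cdot,\cdot]_E$ and $\chi_E$ extend uniquely by multilinearity to the full tensor product, and it suffices to verify every identity on homogeneous elements.

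Next I would check the three conditions in the definition of a 3-$\rho$-Lie algebra. The degree condition is immediate since $[f,g,h]_\mathcal{B}$ has degree $|f|+|g|+|h|$ and $\mu(a,b,c)$ has degree $|a|+|b|+|c|$ (because $\mu$ is even). For $\rho$-skew-symmetry one substitutes the definition of $[\cdot,\cdot,\cdot]_E$ on both sides, transports the swap from $[\cdot,\cdot,\cdot]_E$ to the $\mathcal{B}$-bracket by its own $\rho$-skew-symmetry, and to $\mu$ by the $\rho$-commutativity of $g_{\rho,C,\chi_g}$; the resulting collection of $\rho$-factors collapses using the two-cocycle identities $\rho(x,y)\rho(y,x)=1$ and $\rho(x+y,z)=\rho(x,z)\rho(y,z)$ to the desired single factor $\rho(f\otimes a,g\otimes b)=\rho(f+a,g+b)$. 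The second equality of $\rho$-skew-symmetry is handled identically.

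The main obstacle is the $\rho$-fundamental identity for $[\cdot,\cdot,\cdot]_E$. My plan is to expand each of the four terms
$$[f_1\otimes a_1,f_2\otimes a_2,[g_1\otimes b_1,g_2\otimes b_2,g_3\otimes b_3]_E]_E,\quad [[\cdots]_E,\cdots,\cdots]_E,\ \ldots$$
using \eqref{aaa} twice, so that each term becomes a product of several $\rho$-factors times $[f_1,f_2,[g_1,g_2,g_3]_\mathcal{B}]_\mathcal{B}\otimes\mu(a_1,a_2,\mu(b_1,b_2,b_3))$ (or the analogous reassociations). In the $g$-factor, because $(g,\mu)$ is associative and commutative with respect to $\rho$, all four $\mu$-expressions collapse to a single common term $\mu(a_1,a_2,b_1,b_2,b_3)$ up to $\rho$-factors that can be moved into the scalar coefficient; in the $\mathcal{B}$-factor one invokes the $\rho$-fundamental identity for $\mathcal{B}_{\rho,\chi}$. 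The delicate step is to check that the scalar $\rho$-coefficients in front of the three right-hand terms match exactly those coming from the prefactor of the left-hand term times $\rho(f_1+f_2,g_1)$ and $\rho(f_1+f_2,g_1+g_2)$ as prescribed in the identity; this is a careful but purely mechanical simplification using $\rho(x+y,z)=\rho(x,z)\rho(y,z)$ and $\rho(x,y)\rho(y,x)=1$.

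Finally I would verify the three properties of the quadratic structure $\chi_E$. Non-degeneracy follows from non-degeneracy of $\chi$ and $\chi_g$ by a standard basis argument on $\mathcal{B}\otimes g$. The $\rho$-symmetry reduces via \eqref{ccc} to the $\rho$-symmetries of $\chi$ and $\chi_g$ after absorbing the prefactor $\rho(a,g)$ using the cocycle relations. Invariance $\chi_E([f_1\otimes a_1,f_2\otimes a_2,f_3\otimes a_3]_E,f_4\otimes a_4)=\chi_E(f_1\otimes a_1,[f_2\otimes a_2,f_3\otimes a_3,f_4\otimes a_4]_E)$ is obtained by expanding both sides, using invariance of $\chi$ in $\mathcal{B}$ and of $\chi_g$ in $g$, and again collecting $\rho$-factors; this is in the same spirit as the skew-symmetry check but shorter, and provides no further difficulty once the bookkeeping of the fundamental identity has been carried out.
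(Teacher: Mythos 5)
Your proposal is correct and follows essentially the same route as the paper's proof: expand each term of the $\rho$-fundamental identity and of the invariance condition via \eqref{aaa} and \eqref{ccc}, collapse the $\rho$-factors with the two-cocycle identities, and reduce to the corresponding identities in $\mathcal{B}_{\rho,\chi}$ and $g_{\rho,C,\chi_g}$. You are in fact slightly more complete than the paper, which leaves the $\rho$-skew-symmetry of $[\cdot,\cdot,\cdot]_E$ and the non-degeneracy of $\chi_E$ implicit.
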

\begin{proof}
Using \eqref{aaa} and the commutative property of product $\mu$, we get
\begin{align*}
&[f\otimes a, g\otimes b, [h_1\otimes c, h_2\otimes d, h_3\otimes e]_E]_E=\rho(a+b, h_1+h_2+h_3)\rho(c, h_2+h_3)\rho(d, h_3)\rho(a,g)\\
&\qquad \qquad \qquad\qquad \qquad \qquad \qquad\ \ \ \  \ \ \ \ \ \ \ \ \ \ [f,g,[h_1, h_2, h_3]_\mathcal{B}]_\mathcal{B}\otimes\mu(a,b,\mu(c,d,e)),\\
&[[f\otimes a, g\otimes b, h_1\otimes c]_E, h_2\otimes d, h_3\otimes e]_E=\rho(a+b, h_1+h_2+h_3)\rho(c, h_2+h_3)\rho(d, h_3)\rho(a,g)\\
&\qquad \qquad \qquad\qquad \qquad \qquad \qquad\ \ \ \  \ \ \ \ \ \ \ \ \ \ [[f,g,h_1]_\mathcal{B}, h_2, h_3]_\mathcal{B}\otimes\mu(a,b,\mu(c,d,e)),\\
&\rho(f+a+g+b, h_1+c)[ h_1\otimes c, [f\otimes a,g\otimes b, h_2\otimes d]_E, h_3\otimes e]_E=\rho(a+b, h_1+h_2+h_3)\rho(c, h_2+h_3)\\
&\qquad \qquad \qquad\qquad \qquad \qquad \qquad\ \ \ \  \ \ \ \ \ \rho(d, h_3)\rho(a,g)\rho(f+g, h_1)[h_1,[f,g, h_2]_\mathcal{B}, h_3]_\mathcal{B}\otimes\mu(a,b,\mu(c,d,e)),\\
&\rho(f+a+g+b, h_1+c+h_2+d)[h_1\otimes c,h_2\otimes d,[f\otimes a,g\otimes b, h_3\otimes e]_E]_E=\rho(a+b, h_1+h_2+h_3)\\
&\qquad \qquad \qquad\qquad \qquad\rho(c, h_2+h_3)\rho(d, h_3)\rho(a,g)\rho(f+g, h_1+h_2)[h_1,h_2,[f,g, h_3]_\mathcal{B}]_\mathcal{B}\otimes\mu(a,b,\mu(c,d,e)),\\
\end{align*}
where $f,g,h_1, h_2, h_3\in Hg(\mathcal{B}_{\rho,\chi})$ and $a,b,c, d, e\in Hg(g_{\rho,\chi,C})$. So $[.,.,.]_E$ is a 3-$\rho$-Lie algebra structure on $E$. Also, we have
$$\chi_E(f\otimes a, g\otimes b)=\rho(f+g,g+b)\chi_E(g\otimes b,f\otimes a),$$
and
\begin{align*}
\chi_E([f\otimes a, g\otimes b, h\otimes c]_E,h_1\otimes d) &= \rho(a,g+h)\rho(b,h)([f,g,h]_A\otimes\mu(a, b,c),h_1\otimes d)\\
&=\rho(a,g+h)\rho(b,h)\rho(a+b+c,h_2)\chi([f,g,h]_A,h_1)\chi_g(\mu(a,b,c), d)\\
&=\rho(a,g+h)\rho(b,h)\rho(a+b+c,h_2)\chi(f,[g,h_\mathcal{B},h_1]_\mathcal{B})\chi_g(a,\mu(b,c, d))\\
&=\rho(b,h)\rho(b+c,h_1)\chi_E(f\otimes a,[g,h,h_1]\otimes\mu(b,c,d))\\
&=\chi_E(f\otimes a, [g\otimes b, h\otimes c, h_1\otimes d]_E),
\end{align*}
which imply $\chi_E$ is a quadratic structure.
\end{proof}
\subsection{Ideal of 3-$\rho$-Lie algebras} In this part, we introduce the
definitions of subalgebra and ideal of $\mathcal{B}_{\rho}$ and give some properties related to ideals of $\mathcal{B}_{\rho}$.
\begin{definition}
A subalgebra of $\mathcal{B}_{\rho}$ is a sub-vector space $I\subseteq \mathcal{B}_{\rho}$ such that $[I,I,I]_\mathcal{B}\subseteq I$. $I$ is also called an ideal of $\mathcal{B}_{\rho}$ if $[I,\mathcal{B}_{\rho},\mathcal{B}_{\rho}]_\mathcal{B}\subseteq I$.
\end{definition}
\begin{definition}
Let $I$ be an ideal of $\mathcal{B}_{\rho,\chi}$.\\
(i)$~~$ $I$ is said to be non-degenerate if $\chi|_{I\times I}$ is non-degenerate.\\
(ii)$~~$  The orthogonal $I^{\perp}$ of $I$, with respect to $\chi$, is defined by
\begin{equation}
I^{\perp}=\{f\in \mathcal{B}_{\rho},~~ \chi(f,g)=0~~\forall g\in I\}.
\end{equation}
\end{definition}
\begin{lemma}
	Let $Z(\mathcal{B}_{\rho})$ be the center of $\mathcal{B}_{\rho}$ and defined by
	\begin{equation}
	Z(\mathcal{B}_{\rho})=\{f\in \mathcal{B}_{\rho}|~~[f,g,h]_{\mathcal{B}}=0~~\forall g,h\in \mathcal{B}_{\rho}\}.
	\end{equation}
	Then $Z(\mathcal{B}_{\rho})$ is an ideal of $\mathcal{B}_{\rho}$.
\end{lemma}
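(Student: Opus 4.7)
The proof has two parts: verifying that $Z(\mathcal{B}_{\rho})$ is a (graded) subspace, and then verifying the ideal inclusion $[Z(\mathcal{B}_{\rho}),\mathcal{B}_{\rho},\mathcal{B}_{\rho}]_{\mathcal{B}}\subseteq Z(\mathcal{B}_{\rho})$. The plan is to dispatch both with the trilinearity and the $\rho$-skew-symmetry of the bracket; no deeper structural input such as the $\rho$-fundamental identity is needed.

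First I would check that $Z(\mathcal{B}_{\rho})$ is closed under linear combinations: for $f_1,f_2\in Z(\mathcal{B}_{\rho})$ and scalars $\alpha,\beta\in\mathbb{K}$, trilinearity of $[\cdot,\cdot,\cdot]_{\mathcal{B}}$ in the first slot gives $[\alpha f_1+\beta f_2,g,h]_{\mathcal{B}}=\alpha[f_1,g,h]_{\mathcal{B}}+\beta[f_2,g,h]_{\mathcal{B}}=0$ for every $g,h\in\mathcal{B}_{\rho}$. To see that $Z(\mathcal{B}_{\rho})$ is a graded subspace, I would decompose an arbitrary $f\in Z(\mathcal{B}_{\rho})$ as $f=\sum_{a\in G}f_a$ with $f_a\in\mathcal{B}_a$; since $[f,g,h]_{\mathcal{B}}=\sum_a[f_a,g,h]_{\mathcal{B}}$ splits into distinct $G$-homogeneous pieces (because $[f_a,g,h]_{\mathcal{B}}$ has degree $a+|g|+|h|$ by condition (1) of Definition 2.1), each homogeneous component must vanish, so every $f_a$ lies in $Z(\mathcal{B}_{\rho})$.

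For the ideal condition, let $f\in Z(\mathcal{B}_{\rho})$ (which we may assume homogeneous by the previous paragraph) and $g_1,g_2\in \mathcal{B}_{\rho}$. I need $[f,g_1,g_2]_{\mathcal{B}}\in Z(\mathcal{B}_{\rho})$, but in fact $[f,g_1,g_2]_{\mathcal{B}}=0$ by the very definition of $Z(\mathcal{B}_{\rho})$, and $0\in Z(\mathcal{B}_{\rho})$ trivially. A small observation worth recording is that because of $\rho$-skew-symmetry, an element $f$ lies in $Z(\mathcal{B}_{\rho})$ iff all brackets containing $f$ in any of the three slots vanish: for example, $[g,f,h]_{\mathcal{B}}=-\rho(g,f)[f,g,h]_{\mathcal{B}}=0$, and similarly for the third slot. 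This makes the argument position-independent and shows that other natural variants of the inclusion (e.g.\ $[\mathcal{B}_{\rho},Z(\mathcal{B}_{\rho}),\mathcal{B}_{\rho}]_{\mathcal{B}}\subseteq Z(\mathcal{B}_{\rho})$) come for free.

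There is no real obstacle here: the statement reduces to trilinearity plus the definition of the center. The only conceptual point is to make sure $Z(\mathcal{B}_{\rho})$ is a graded subspace so that it is a genuine ideal in the $G$-graded sense, and that is handled by the homogeneous-component argument above.
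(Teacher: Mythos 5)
Your proof is correct and its core is exactly the paper's argument: the paper's entire proof is the one-line observation that $[Z(\mathcal{B}_{\rho}),\mathcal{B}_{\rho},\mathcal{B}_{\rho}]_{\mathcal{B}}=\{0\}\subseteq Z(\mathcal{B}_{\rho})$, which is your key step. The additional verifications you supply (closure under linear combinations, gradedness of $Z(\mathcal{B}_{\rho})$, and position-independence via $\rho$-skew-symmetry) are sound but are details the paper simply omits as clear.
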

\begin{proof}
	It is clear that $[Z(\mathcal{B}_{\rho}), \mathcal{B}_{\rho}, \mathcal{B}_{\rho}]=\{0\}\subseteq Z(\mathcal{B}_{\rho})$.
\end{proof}
\begin{lemma}
Let $I$ be an ideal of $\mathcal{B}_{\rho,\chi}$. Then $I^{\perp}$ with respect to $\chi$ is an ideal of $\mathcal{B}_{\rho,\chi}$.
\end{lemma}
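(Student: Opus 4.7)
The plan is to show that for any homogeneous $f\in I^{\perp}$ and homogeneous $g,h\in\mathcal{B}_{\rho}$, the bracket $[f,g,h]_{\mathcal{B}}$ pairs trivially under $\chi$ with every element of $I$. By bilinearity and the decomposition of $\mathcal{B}$ into homogeneous components, it suffices to work with homogeneous elements throughout and then extend by linearity; this is why it is enough to verify $\chi([f,g,h]_{\mathcal{B}},k)=0$ for homogeneous $k\in I$.

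First I would invoke the invariance property of $\chi$ to transfer the bracket to the other argument:
\[
\chi([f,g,h]_{\mathcal{B}},k)=\chi(f,[g,h,k]_{\mathcal{B}}).
\]
The right-hand side involves $k$ in the third slot, whereas the ideal condition $[I,\mathcal{B}_{\rho},\mathcal{B}_{\rho}]_{\mathcal{B}}\subseteq I$ places the ideal element in the first slot. The next step is therefore to use $\rho$-skew-symmetry to move $k$ into the first position: applying the two identities in condition (3) of the $3$-$\rho$-Lie algebra definition successively gives
\[
[g,h,k]_{\mathcal{B}}=-\rho(h,k)[g,k,h]_{\mathcal{B}}=\rho(h,k)\rho(g,k)[k,g,h]_{\mathcal{B}}=\rho(g+h,k)[k,g,h]_{\mathcal{B}},
\]
using the multiplicative property of $\rho$ in the second variable.

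Since $k\in I$ and $I$ is an ideal, $[k,g,h]_{\mathcal{B}}\in I$, so $[g,h,k]_{\mathcal{B}}\in I$ as well (it is a nonzero scalar multiple of an element of $I$). Because $f\in I^{\perp}$, this forces $\chi(f,[g,h,k]_{\mathcal{B}})=0$, and hence $\chi([f,g,h]_{\mathcal{B}},k)=0$ for every $k\in I$. As $k$ was arbitrary in $I$, we conclude $[f,g,h]_{\mathcal{B}}\in I^{\perp}$, so $[I^{\perp},\mathcal{B}_{\rho},\mathcal{B}_{\rho}]_{\mathcal{B}}\subseteq I^{\perp}$, and $I^{\perp}$ is an ideal.

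I expect no serious obstacle here; the one point that requires care is tracking the $\rho$-scalars when permuting the arguments of the bracket, since they are only nonzero (not equal to $1$), so one must be sure to use the fact that $\rho$ takes values in $\mathbb{K}^{\star}$ to preserve membership in $I$. The reduction to one position of $I^{\perp}$ is automatic from the $\rho$-skew-symmetry, which shows that being ``left'' and ``right'' ideal are equivalent notions here.
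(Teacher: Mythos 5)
Your proof is correct and follows essentially the same route as the paper: apply invariance of $\chi$ to get $\chi([f,g,h]_{\mathcal{B}},k)=\chi(f,[g,h,k]_{\mathcal{B}})$, then use $\rho$-skew-symmetry to rewrite $[g,h,k]_{\mathcal{B}}=\rho(g+h,k)[k,g,h]_{\mathcal{B}}\in I$ and conclude from $f\in I^{\perp}$. Your explicit remark that the $\rho$-scalar is nonzero (so scalar multiples stay in $I$) is a small point of care the paper leaves implicit, but the argument is the same.
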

\begin{proof}
	At first, we show that $[I^{\perp}, \mathcal{B}_{\rho,\chi}, \mathcal{B}_{\rho,\chi}]\subseteq I^{\perp}$. For $f\in I^{\perp}$, $g,h\in Hg(\mathcal{B}_{\rho})$ and $s\in I$ we have
	$$\chi([f,g,h]_\mathcal{B}, s)=\chi(f,[g,h,s]_\mathcal{B})=\rho(g+h,s)\chi(f,[s,g,h])=0,$$
	since $f\in I^{\perp}$ and $I$ is an ideal of $\mathcal{B}_{\rho,\chi}$. 
\end{proof}
\textbf{Symplectic Structure:} In the following, we introduce the notions like symplectic structure $\omega$, metric $\varphi$, derivation $D$ and $\varphi$-antisymmetric derivation $D$ on $\mathcal{B}_{\rho}$ and show that a symplectic structure $\omega$ may be defined on $\mathcal{B}_{\rho}$ if and only if there exists a $\varphi$-antisymmetric invertible derivation $D$ of $\mathcal{B}_{\rho}$.
\begin{definition}\label{a15}
	A non-degenerate and $\rho$-skew-symmetric bilinear form $\omega\in\wedge^2\mathcal{B}^*$ is called a symplectic structure on $\mathcal{B}_{\rho}$ if 
	\begin{align*}
	&\omega([f_1, f_2, f_3]_\mathcal{B}, f_4)-\rho(f_1, f_2+f_3+f_4)\omega([f_2, f_3, f_4]_\mathcal{B}, f_1)\\
	&\ \ \ +\rho(f_1+f_2,f_3+f_4)\omega([f_3, f_4, f_1]_\mathcal{B}, f_2)-\rho(f_1+f_2+f_3, f_4)\omega([f_4, f_1, f_2]_\mathcal{B},f_3)=0.
	\end{align*}
\end{definition}
We will say that $(\mathcal{B},[.,.,.]_\mathcal{B},\rho, \chi, \omega)$ is a quadratic symplectic 3-$\rho$-Lie algebra if $(\mathcal{B},\chi)$ is quadratic 
and $(\mathcal{B},\omega)$ is symplectic.\\
In the next example, we are going to check the $4$-dimensional 3-$\rho$-Lie algebra, which it's classical case is given in \cite{FF}.
\begin{example}\label{Ex.123}
	Let us consider $\mathcal{B}$ as a $G=\mathbb{Z}_2^3$-graded vector space with the basis $\{l_1, l_2, l_3, l_4\}$. Then $\mathcal{B}=\mathcal{B}_{(1,0,0)}\oplus\mathcal{B}_{(0,1,0)}\oplus\mathcal{B}_{(0,0,1)}\oplus\mathcal{B}_{(1,1,1)}$ with
	$$\quad l_1\in\mathcal{B}_{(1,0,0)},\quad l_2\in\mathcal{B}_{(0,1,0)},\quad l_3\in\mathcal{B}_{(0,0,1)},\quad l_4\in\mathcal{B}_{(1,1,1)}.$$
	Also, $\rho:G\times G\longrightarrow \mathbb{C}^*$ is defined by the matrix
	$$
	[\rho(a,b)]=
	\begin{bmatrix}
	1	& -1 & -1 & 1 \\ 
	-1	& 1 &  -1&  1\\ 
	-1	& -1 & 1 &  1\\ 
	1	& 1 &  1& 1
	\end{bmatrix},
	$$
	where $(a,b)\in \{(1,0,0), (0,1,0), (0,0,1), (1,1,1)\}\times  \{(1,0,0), (0,1,0), (0,0,1), (1,1,1)\}$. The 3-$\rho$-Lie algebra bracket associate to the basis $\{l_1, l_2, l_3, l_4\}$ is given by
	\begin{align*}
	&[l_1, l_2, l_3]_\mathcal{B}=l_4,\quad [l_1, l_2, l_4]_\mathcal{B}=l_3,\\
	&[l_2, l_3, l_4]_\mathcal{B}=l_1,\quad [l_1, l_3,l_4]_\mathcal{B}=l_2.
	\end{align*}
	It is clear that any non-degenerate $\rho$-skew symmetric bilinear form is a symplectic structure on $\mathcal{B}$. For example
	\begin{align*}
	&\omega_1 = l^*_3\wedge l^*_1+ l^*_4\wedge l^*_2,\quad \omega_2 = l^*_2\wedge l^*_1+ l^*_4\wedge l^*_3,\quad \omega_3 = l^*_2\wedge l^*_1+ l^*_3\wedge l^*_4,\\
	&\omega_4 = l^*_1\wedge l^*_2+ l^*_4\wedge l^*_3,\quad
	\omega_5 = l^*_1\wedge l^*_2+ l^*_3\wedge l^*_4,\quad 
	\omega_6 = l^*_1\wedge l^*_3+ l^*_2\wedge l^*_4,
	\end{align*}
	where $\{l_1^*,l_2^*,l_3^*,l_4^*\}$ is the dual basis of  $\{l_1, l_2, l_3, l_4\}$.
\end{example}
\begin{definition}
A derivation on $\mathcal{B}_{\rho}$ is a linear map $D: \mathcal{B}_{\rho}\longrightarrow \mathcal{B}_{\rho}$ such that
\begin{align*}
&D[f,g,h]_\mathcal{B}= [D(f), g, h] +\rho(D, f) [f, D(g), h] + \rho(D, f+g)[f,g,D(h)],
\end{align*}
for any $f,g,h\in Hg(\mathcal{B}_{\rho})$. Let us denote by $Der(\mathcal{B}_{\rho})$ the space of all derivations of $\mathcal{B}_{\rho}$.
\end{definition}
\begin{definition}
	A metric on $\rho$-Lie algebra $\mathcal{B}$ is a non-degenerate $\rho$-symmetric bilinear form $\varphi:\mathcal{B}\times \mathcal{B}\rightarrow \mathbb{K}$ such that 
	$$\varphi([f,g],h)=-\rho(f,g)\varphi(g,[f,h]).$$
	Also, non-degenerate $\rho$-symmetric bilinear form $\varphi$ is said to be a metric on $\mathcal{B}_{\rho}$ if
	\begin{equation}
	\varphi([f,g,h],z) +\rho(f+g,h) \varphi(h, [f,g,z]) = 0,\quad \forall f,g,h,z\in Hg(\mathcal{B}_{\rho}).
	\end{equation}
	In this case, $(\mathcal{B}_{\rho},\varphi)$ is called a metric 3-$\rho$-Lie algebra and we denote it by $\mathcal{B}_{\rho,\varphi}$.
	\end{definition}
\begin{definition}
Consider $\mathcal{B}_{\rho,\chi}$ ($(\mathcal{B}_{\rho,\varphi})$ ) and let $D$ be an endomorphism or a derivation of $\mathcal{B}_{\rho}$. $D$ is called $\chi$-antisymmetric ($\varphi$-antisymmetric) if $$\chi(D(f),g)=-\rho(D,f)\chi(f,D(g))\quad (\varphi(D(f),g)=-\rho(D,f)\varphi(f,D(g))),$$
for all $f,g\in Hg(\mathcal{B}_{\rho})$. Denote by ${\rm End_{\chi}(\mathcal{B}_{\rho})}$ (${\rm End_{\varphi}(\mathcal{B}_{\rho})}$) or ${\rm Der_{\chi}(\mathcal{B}_{\rho})}$ (${\rm Der_{\varphi}(\mathcal{B}_{\rho})}$) the space of $\chi$-antisymmetric ($\varphi$-antisymmetric) endomorphism or derivation of $\mathcal{B}_{\rho}$.
\end{definition}
\begin{proposition}
Consider the 3-$\rho$-Lie algebra $\mathcal{B}_{\rho}$. The following assertions are equivalent:
\begin{enumerate}
\item[i.]
 The existence of a symplectic structure on $\mathcal{B}_{\rho}$,
\item[ii.]
The existence of an invertible derivation $D\in {\rm Der_{\varphi}(\mathcal{B}_{\rho})}$.
\end{enumerate}
\end{proposition}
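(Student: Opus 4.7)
The plan is to establish the equivalence via the bijective correspondence
\[
\omega(f,g) \; := \; \varphi(D(f), g)
\]
between bilinear forms $\omega$ on $\mathcal{B}_\rho$ and endomorphisms $D$ of $\mathcal{B}_\rho$, which the non-degenerate metric $\varphi$ provides. The proof then amounts to showing that, under this correspondence, the three properties demanded of $\omega$ in Definition \ref{a15} (non-degeneracy, $\rho$-skew-symmetry, and the cyclic symplectic identity) translate term-by-term into the three properties demanded of $D$ (invertibility, $\varphi$-antisymmetry, and the derivation rule).

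\textbf{Forward direction (ii) $\Rightarrow$ (i).} Given an invertible $D \in {\rm Der_{\varphi}(\mathcal{B}_{\rho})}$, set $\omega(f,g) := \varphi(D(f), g)$. Non-degeneracy of $\omega$ is immediate: if $\omega(f, \cdot) \equiv 0$, then $D(f) = 0$ by non-degeneracy of $\varphi$, and hence $f = 0$ by invertibility of $D$. The $\rho$-skew-symmetry of $\omega$ follows by chaining the $\rho$-symmetry of $\varphi$ with the $\varphi$-antisymmetry of $D$, using $\rho(|D|, |f|)\,\rho(|f|, |D|) = 1$ to cancel the degree contribution of $D$. For the symplectic identity, I would expand each of the four terms $\omega([f_i,f_j,f_k]_\mathcal{B}, f_\ell) = \varphi(D[f_i,f_j,f_k]_\mathcal{B}, f_\ell)$ using the derivation rule on $D$, then move $f_\ell$ across the bracket via the metric property, producing twelve terms of the shape $\varphi([\cdots]_\mathcal{B}, [\cdots]_\mathcal{B})$ whose $\rho$-weighted sum collapses to zero by the $\rho$-fundamental identity.

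\textbf{Backward direction (i) $\Rightarrow$ (ii).} Given a symplectic structure $\omega$, define $D \colon \mathcal{B}_\rho \to \mathcal{B}_\rho$ by the equation $\varphi(D(f), g) = \omega(f, g)$ for every homogeneous $g$; the non-degeneracy of $\varphi$ guarantees that $D(f)$ is uniquely determined, yielding a linear map $D$, and the non-degeneracy of $\omega$ forces $D$ to be invertible. Reading the $\rho$-skew-symmetry of $\omega$ back through $\varphi$ immediately yields the $\varphi$-antisymmetry of $D$, by exactly the computation used in the forward direction, run in reverse.

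\textbf{Derivation property (the main obstacle).} The hard step is verifying that the $D$ built in the backward direction is actually a derivation. For this I would show that for all homogeneous $f_1,f_2,f_3,f_4$,
\begin{align*}
&\varphi(D[f_1, f_2, f_3]_\mathcal{B} - [D(f_1), f_2, f_3]_\mathcal{B}\\
&\quad - \rho(D, f_1)[f_1, D(f_2), f_3]_\mathcal{B} - \rho(D, f_1+f_2)[f_1, f_2, D(f_3)]_\mathcal{B},\, f_4) = 0,
\end{align*}
and then invoke non-degeneracy of $\varphi$ to conclude. Rewriting each summand on the left through $\varphi(D(x), y) = \omega(x, y)$ and using the metric property to reposition $f_4$ into the interior of the brackets, the left-hand side becomes exactly the four-term combination appearing in the symplectic identity of Definition \ref{a15} on the arguments $(f_1,f_2,f_3,f_4)$, with the prescribed $\rho$-coefficients. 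The real work is the bookkeeping of $\rho$-factors as the derivation rule, the $\varphi$-invariance, and the $\rho$-symmetry are applied in turn; once this is completed, the derivation identity and the symplectic identity are revealed as the same identity, read through the correspondence $\omega \leftrightarrow D$.
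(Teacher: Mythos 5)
Your overall strategy is the same as the paper's: the paper also proves (ii) $\Rightarrow$ (i) by setting $\omega(f,g)=\varphi(D(f),g)$ and dismisses the converse with ``conversely is similar,'' so your detailed treatment of the backward direction is a welcome addition rather than a departure. Moreover, the identity you isolate in your ``main obstacle'' paragraph --- that pairing the derivation defect $D[f_1,f_2,f_3]_\mathcal{B}-[D(f_1),f_2,f_3]_\mathcal{B}-\rho(D,f_1)[f_1,D(f_2),f_3]_\mathcal{B}-\rho(D,f_1+f_2)[f_1,f_2,D(f_3)]_\mathcal{B}$ against $f_4$ via $\varphi$ reproduces exactly the four-term symplectic sum of Definition \ref{a15} --- is precisely the content hidden in the paper's terse display ``$=\varphi(D[f_1,f_2,f_3]_\mathcal{B},f_4)-\varphi(D[f_1,f_2,f_3]_\mathcal{B},f_4)=0$,'' and it correctly yields both implications at once: if $D$ is a derivation the left side vanishes, and if $\omega$ is symplectic the right side vanishes for all $f_4$ and non-degeneracy of $\varphi$ forces the derivation rule.

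The one genuine flaw is in your description of the forward direction: the verification does \emph{not} go through ``twelve terms of the shape $\varphi([\cdots]_\mathcal{B},[\cdots]_\mathcal{B})$'' and the $\rho$-fundamental identity plays no role. The symplectic identity contains only single (unnested) brackets, and none of the tools available here (derivation rule, metric invariance $\varphi([f,g,h],z)=-\rho(f+g,h)\varphi(h,[f,g,z])$, $\rho$-skew-symmetry of the bracket, $\varphi$-antisymmetry of $D$) ever produces the nested brackets to which the fundamental identity applies; terms of the form $\varphi(\mathrm{bracket},\mathrm{bracket})$ simply do not arise. What actually happens is that each of the terms $-\rho(\cdots)\omega([f_2,f_3,f_4],f_1)$, $+\rho(\cdots)\omega([f_3,f_4,f_1],f_2)$, $-\rho(\cdots)\omega([f_4,f_1,f_2],f_3)$ is converted, using the metric property together with $\varphi$-antisymmetry of $D$ and cyclicity of the bracket, into $-\varphi([D(f_1),f_2,f_3]_\mathcal{B},f_4)$, $-\rho(D,f_1)\varphi([f_1,D(f_2),f_3]_\mathcal{B},f_4)$, $-\rho(D,f_1+f_2)\varphi([f_1,f_2,D(f_3)]_\mathcal{B},f_4)$ respectively, and the derivation rule then collapses the sum against $+\varphi(D[f_1,f_2,f_3]_\mathcal{B},f_4)$. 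In other words, the forward direction is your own key identity read in the opposite direction; you should delete the appeal to the fundamental identity and run that single computation both ways. With that repair the proposal is correct, and it supplies the converse that the paper leaves to the reader.
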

\begin{proof}
	At first, we assume that there is an invertible derivation $D$ on $\mathcal{B}_{\rho}$ such that 
	$\varphi(D(f), g)=-\rho(D,f)\varphi(f,D(g))$. For $f,g\in \mathcal{B}_{\rho}$, define $\omega(f,g)=\varphi(D(f),g)$. Thus $\omega$ is a non-degenerate and $\rho$-skew symmetric bilinear form. Now, we show that it is a symplectic structure. For homogeneous elements $f,g,h,z\in \mathcal{B}_{\rho}$, we deduce 
	\begin{align*}
	&\omega([f_1,f_2,f_3]_\mathcal{B},f_4)-\rho(f_1, f_2+f_3+f_4)\omega([f_2,f_3,f_4]_\mathcal{B},f_1)+\rho(f_1+f_2,f_3+f_4)\omega([f_3,f_4,f_1]_\mathcal{B},f_2)\\
	&\ \ \ -\rho(f_1+f_2+f_3,f_4)\omega([f_4,f_1,f_2]_\mathcal{B},f_3)\\
	&=\varphi(D[f_1,f_2,f_3]_\mathcal{B},f_4)-\rho(f_1,f_2+f_3+f_4)\varphi(D[f_2,f_3,f_4]_\mathcal{B},f_1)\\
	&\ \ \ +\rho(f_1+f_2,f_3+f_4)\varphi(D[f_3,f_4,f_1]_\mathcal{B},f_2)-\rho(f_1+f_2+f_3,f_4)\varphi(D[f_4,f_1,f_2]_\mathcal{B},f_3)\\
	&=\varphi(D[f_1,f_2,f_3]_\mathcal{B},f_4)-\varphi(D[f_1,f_2,f_3]_\mathcal{B},f_4)=0.
	\end{align*}
	Conversely is similar.
\end{proof}
Consider the quadratic 3-$\rho$-Lie algebra $\mathcal{B}_{\rho,\chi}$. For any symmetric bilinear form $\chi^{\prime}$ on $\mathcal{B}_{\rho}$, there is an associated map $D:\mathcal{B}_{\rho}\longrightarrow \mathcal{B}_{\rho}$ satisfying
\begin{equation}\label{ssss}
\chi^{\prime}(f,g) = \chi(D(f),g),\quad \forall f,g\in \mathcal{B}_{\rho}.
\end{equation}
Since $\chi$ and $\chi^{\prime}$ are symmetric, then $D$ is symmetric with respect to $\chi$ , i.e., $$\chi(D(f),g) =\rho(D,f)\chi(f,D(g)),\quad \forall f,g\in Hg(\mathcal{B}_{\rho}).$$
\begin{lemma}\label{lem 25}
Let $\chi^{\prime}$ be defined by \eqref{ssss} on $\mathcal{B}_{\rho}$. Then
\begin{itemize}
\item[i.]
$\chi^{\prime}$ is invariant if and only if $D$ satisfies
\begin{equation}\label{eq.12}
D([f,g,h]_\mathcal{B}) = [D(f),g,h]_\mathcal{B} = \rho(D,f)[f,D(g),h]_\mathcal{B}=\rho(D,f+g)[f,g,D(h)]_\mathcal{B}.
\end{equation}
\item[ii.]
$\chi^{\prime}$ is non-degenerate if and only if $D$ is invertible.
\end{itemize}
\end{lemma}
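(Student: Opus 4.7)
The plan is to handle parts (i) and (ii) separately, each as an iff, and to lean on the non-degeneracy and invariance of $\chi$ together with the $\rho$-skew-symmetry of the bracket.

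For the forward direction of (i), I would start from the invariance hypothesis $\chi'([f,g,h]_\mathcal{B},z)=\chi'(f,[g,h,z]_\mathcal{B})$, unfold the definition $\chi'(\cdot,\cdot)=\chi(D(\cdot),\cdot)$, and use invariance of $\chi$ on the right-hand side to rewrite $\chi(D(f),[g,h,z]_\mathcal{B})$ as $\chi([D(f),g,h]_\mathcal{B},z)$. The resulting equality
\[
\chi\bigl(D([f,g,h]_\mathcal{B})-[D(f),g,h]_\mathcal{B},\,z\bigr)=0
\]
holds for every homogeneous $z$, so non-degeneracy of $\chi$ yields the first identity $D([f,g,h]_\mathcal{B})=[D(f),g,h]_\mathcal{B}$. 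The other two identities I would then deduce algebraically by using $\rho$-skew-symmetry to cycle $D$ through the slots. Concretely, from $[f,g,h]_\mathcal{B}=-\rho(f,g)[g,f,h]_\mathcal{B}$ one gets $D([f,g,h]_\mathcal{B})=-\rho(f,g)[D(g),f,h]_\mathcal{B}$, and then $\rho$-skew-symmetry once more produces the factor $-\rho(D+g,f)$; these combine as $\rho(f,g)\rho(D+g,f)=\rho(D,f)$ because $\rho(f,g)\rho(g,f)=1$, giving $D([f,g,h]_\mathcal{B})=\rho(D,f)[f,D(g),h]_\mathcal{B}$. An analogous swap of the last two arguments handles the third identity.

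For the reverse direction of (i), assuming (\ref{eq.12}), I would simply compute
\[
\chi'([f,g,h]_\mathcal{B},z)=\chi(D([f,g,h]_\mathcal{B}),z)=\chi([D(f),g,h]_\mathcal{B},z)=\chi(D(f),[g,h,z]_\mathcal{B})=\chi'(f,[g,h,z]_\mathcal{B}),
\]
using the first equality of (\ref{eq.12}) and invariance of $\chi$.

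For (ii), this is the standard observation that composing a non-degenerate form with an operator preserves non-degeneracy precisely when the operator is invertible. If $D$ is invertible and $\chi'(f,g)=0$ for every $g$, then $\chi(D(f),g)=0$ for all $g$, hence $D(f)=0$ by non-degeneracy of $\chi$, and so $f=0$. Conversely, if $D(f_0)=0$ for some nonzero $f_0$, then $\chi'(f_0,g)=\chi(0,g)=0$ for every $g$, so $\chi'$ is degenerate. Since $\chi'$ inherits $\rho$-symmetry from $\chi$ together with the $\chi$-symmetry of $D$ noted just above (\ref{ssss}), left- and right-non-degeneracy coincide, so this one-sided argument suffices.

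The only delicate point, and the place I expect to spend the most care, is the $\rho$-bookkeeping when propagating $D$ through the second and third arguments of the bracket in part (i); once the cancellation $\rho(f,g)\rho(g,f)=1$ is invoked, everything collapses cleanly. Parts (i) reverse and (ii) are essentially formal.
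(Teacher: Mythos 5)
Your proposal is correct and follows essentially the same route as the paper: unfold $\chi'$ via $D$, use invariance and non-degeneracy of $\chi$ to get the first identity in (\ref{eq.12}), derive the other two from $\rho$-skew-symmetry of the bracket, and argue (ii) by the standard kernel argument. The only difference is that you spell out the $\rho$-bookkeeping (the cancellation $\rho(f,g)\rho(g,f)=1$) that the paper leaves implicit, which is a welcome addition rather than a deviation.
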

\begin{proof}
(i) Assuming $f,g,h,s\in \mathcal{B}_{\rho}$, we have $$\chi^{\prime}([f,g,h]_\mathcal{B},s) = \chi(D([f,g,h]_\mathcal{B}),s)~~\text{
 and}~~ \chi^{\prime}(f,[g,h,s]_\mathcal{B}) = \chi(D(f),[g,h,s]_\mathcal{B}).$$ 
Since $\chi$ is invariant, so
 $\chi^{\prime}(f,[g,h,s]_\mathcal{B})= \chi([D(f),g,h]_\mathcal{B},s)$. On the other hand, since $\chi$ is non-degenerate, therefore $\chi^{\prime}$ is invariant if and only if $D([f,g,h]_\mathcal{B}) = [D(f),g,h]_\mathcal{B}$. 
Also, since 3-Lie algebra structure $[.,.,.]_\mathcal{B}$ is anticommutative with respect to displacement of every two elements, then $D([f,g,h]_\mathcal{B}) = \rho(D,f)[f,D(g),h]_\mathcal{B}=\rho(D,f+g)[f,g,D(h)]_\mathcal{B}$.\\
(ii) We assume that $\chi^{\prime}$ is non-degenerate and $f\in \mathcal{B}_{\rho}$ such that $D(f) = 0$. Then
$\chi(D(f),\mathcal{B}_{\rho}) = 0$ and so $\chi^{\prime}(f,\mathcal{B}_{\rho}) = 0$. Therefore $f=0$ and $D$ is
invertible. Conversely, assume that $D$ is invertible and $\chi^{\prime}(f,\mathcal{B}_{\rho}) = 0$. So $\chi(D(f),\mathcal{B}_{\rho}) = 0$. Since $\chi$ is non-degenerate, thus $D(f) = 0$. $D$ is invertible, therefore $f = 0$ and $\chi^{\prime}$ is non-degenerate.
\end{proof}
\begin{definition}
A $\rho$-symmetric map $D:\mathcal{B}_{\rho}\longrightarrow \mathcal{B}_{\rho}$ satisfying \eqref{eq.12} is called a centromorphism of $\mathcal{B}_{\rho}$. The space of centromorphisms of $\mathcal{B}_{\rho}$ is denote by $\mathscr{C}(\mathcal{B}_{\rho})$.
\end{definition}
\begin{proposition}\label{pro 26}
Let $\delta\in {\rm Der_{\chi}(\mathcal{B}_{\rho})}$ and $D\in\mathscr{C}(\mathcal{B}_{\rho})$ such that $D\circ \delta=\delta\circ D$. Then $D\circ \delta$ is also a $\chi$-antisymmetric derivation of $\mathcal{B}_{\rho}$.
\end{proposition}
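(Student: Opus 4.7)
The plan is to decompose the statement into two independent checks: (a) that $D\circ\delta$ still satisfies the $\rho$-graded Leibniz rule for the triple bracket, and (b) that $D\circ\delta$ is still $\chi$-antisymmetric, each of degree $|D|+|\delta|$. Both parts amount to careful bookkeeping with the two-cocycle identity $\rho(a,c)\rho(b,c)=\rho(a+b,c)$, together with the three hypotheses on $\delta$, $D$, and their commutation.

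For part (a), I would evaluate $D\circ\delta$ on $[f,g,h]_{\mathcal{B}}$ in two stages. First, since $\delta$ is a derivation,
$$\delta[f,g,h]_{\mathcal{B}} = [\delta(f),g,h]_{\mathcal{B}}+\rho(\delta,f)[f,\delta(g),h]_{\mathcal{B}}+\rho(\delta,f+g)[f,g,\delta(h)]_{\mathcal{B}}.$$
Then I would apply $D$ term by term, using each of the three centromorphism identities from \eqref{eq.12} to push $D$ into the appropriate slot of each bracket; this picks up extra factors $1$, $\rho(D,f)$, and $\rho(D,f+g)$ respectively. Finally, the cocycle identity collapses $\rho(\delta,x)\rho(D,x)$ into $\rho(D+\delta,x)=\rho(D\circ\delta,x)$ in each slot, yielding precisely the derivation identity for $D\circ\delta$ of degree $|D|+|\delta|$.

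For part (b), I would start from $\chi(D\delta(f),g)$ and slide the two operators across $\chi$ in sequence. Since $D$ is $\rho$-symmetric with respect to $\chi$, one obtains
$$\chi(D(\delta(f)),g)=\rho(D,|\delta(f)|)\,\chi(\delta(f),D(g)),$$
and expanding $|\delta(f)|=|\delta|+|f|$ via the cocycle property splits the coefficient into $\rho(D,\delta)\rho(D,f)$. Then the $\chi$-antisymmetry of $\delta$ converts $\chi(\delta(f),D(g))$ into $-\rho(\delta,f)\chi(f,\delta D(g))$, after which the hypothesis $\delta\circ D=D\circ\delta$ rewrites the right-hand slot as $D\delta(g)$. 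Collecting the $\rho$-factors and reapplying the cocycle identity $\rho(D,f)\rho(\delta,f)=\rho(D+\delta,f)=\rho(D\circ\delta,f)$ gives the required relation $\chi(D\delta(f),g)=-\rho(D\circ\delta,f)\chi(f,D\delta(g))$.

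The main obstacle is purely a matter of sign/cocycle bookkeeping: at each step the degree of $D\circ\delta$ has to equal the sum $|D|+|\delta|$ not just additively but also cocyclically, and the commutation hypothesis $D\circ\delta=\delta\circ D$ must be invoked at exactly the right moment to line the brackets up with $\chi$-slot exchanges. Apart from that, both parts are a direct unwinding of the definitions of $\mathrm{Der}_{\chi}(\mathcal{B}_{\rho})$ and $\mathscr{C}(\mathcal{B}_{\rho})$ using \eqref{eq.12} and the invariance/symmetry of $\chi$.
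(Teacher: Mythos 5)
Your proposal is correct and follows essentially the same route as the paper: first expand $\delta[f,g,h]_{\mathcal{B}}$ by the derivation rule and push $D$ into each slot via the centromorphism identity \eqref{eq.12}, collapsing $\rho(\delta,\cdot)\rho(D,\cdot)$ into $\rho(D+\delta,\cdot)$; then verify $\chi$-antisymmetry by sliding $D$ and $\delta$ across $\chi$ in turn and invoking $D\circ\delta=\delta\circ D$. The only point to watch is the stray factor $\rho(D,\delta)$ you correctly note in part (b), which the paper silently drops (it disappears when $D$ is taken even, as is implicit for a centromorphism arising from \eqref{ssss}); otherwise the two arguments coincide.
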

\begin{proof}
We have 
\begin{align*}
D\circ \delta[f,g,h]&= D[\delta(f),g,h]_{\mathcal{B}}+\rho(\delta,f)D[f,\delta(g),h]_{\mathcal{B}} +\rho(\delta,f+g) [f,g,\delta(h)]_{\mathcal{B}}\\&=[D\circ\delta(f),g,h]_{\mathcal{B}}+\rho(D+\delta,f)D[f,D\circ\delta(g),h]_{\mathcal{B}}+\rho(\delta,f+g) [f,g,D\circ\delta(h)]_{\mathcal{B}},
\end{align*}
and
$B(D\circ\delta(f),g)=\rho(D+\delta,f)B(f,\delta\circ D(g)) = -\rho(D+\delta,f)B(f,D\circ \delta(g))$ for all $f,g\in Hg(\mathcal{B}_{\rho})$. So we deduce that $D\circ\delta \in {\rm Der_{\chi}(\mathcal{B}_{\rho})}$.
\end{proof}
\section{Representation of 3-$\rho$-Lie algebras}
In this section we extend the representation theory of 3-Lie algebras introduced in \cite{CLY} and \cite{SK} to color Lie version. We introduce a representation of 3-$\rho$-Lie algebras and discuss the cases of adjoint and coadjoint representations for 3-$\rho$-Lie algebras. \\

Consider $\mathcal{L}=\wedge^2\mathcal{B}_{\rho}$ and call it the fundamental set. Defining 
\begin{equation}\label{123}
[(f_1,f_2), (g_1,g_2)]_{\mathcal{L}}=([f_1, f_2, g_1],g_2)+\rho(f_1+f_2,g_1)(g_1,[f_1,f_2,g_2]),
\end{equation}
we construct a $\rho$-Lie algebra structure on $\mathcal{L}$.
\begin{example}
Let $(\mathcal{B},[.,.,.]_\mathcal{B},\rho,\varphi)$ be a metric 3-$\rho$-Lie algebra. Then $(\mathcal{L},[.,.]_{\mathcal{L}},\varphi_{\mathcal{L}})$ is a metric $\rho$-Lie algebra, where 
$$\varphi_{\mathcal{L}}((f_1,f_2), (g_1,g_2))=\varphi([f_1,f_2,g_1]_\mathcal{B},g_2).$$
\end{example}
\begin{definition}\label{a5}
The pair $(V,\mu)$ is called a representation of $\mathcal{B}_{\rho}$ if the following conditions are satisfied	
	\begin{align}
	\mu[(f_1,f_2),(g_1,g_2)]_{_\mathcal{L}}&=\mu(f_1,f_2)\mu(g_1,g_2)-\rho(f_1+f_2,g_1+g_2)\mu(g_1,g_2)\mu(f_1,f_2),\label{f}\\
	\mu([g_1,g_2,g_3]_\mathcal{B},f)&=\mu(g_1,g_2)\mu(g_3,f)+\rho(g_1,g_2+g_3)\mu(g_2,g_3)\mu(g_1,f)\label{g}\\
	&\ \ \  +\rho(g_1+g_2,g_3)\mu(g_3,g_1)\mu(g_2,f),\nonumber\\
	\mu(g,[f_1,f_2,f_3]_\mathcal{B})&=\rho(g,f_1+f_2)\mu(f_1,f_2)\mu(g,f_3)+\rho(g,f_2+f_3)\rho(f_1,f_2+f_3)\mu(f_2,f_3)\mu(g,f_1)\label{h}\\
	&\ \ \  +\rho(g,f_1+f_3)\rho(f_1+f_2,f_3)\mu(f_3,f_1)\mu(g,f_2),\nonumber
	\end{align}
where $V$ is a $G$-graded vector space and $\mu$ is a linear map from $\mathcal{L}=\wedge^2\mathcal{B}_{\rho}$ to ${\rm gl(V)}$.
\end{definition}
One of the important examples of the 3-$\rho$-Lie algebra representation is the adjoint representation, which is a bilinear map $ad:\mathcal{B}_{\rho}\times \mathcal{B}_{\rho}\longrightarrow {\rm gl(\mathcal{B}_{\rho})}$ defined by $ad(f_1,f_2)(f_3)=[f_1,f_2,f_3]_\mathcal{B}$, that is a representation of $\mathcal{B}_{\rho}$ on itself. 
\begin{lemma}\label{aa5}
For the representation $(V,\mu)$ of $\mathcal{B}_{\rho}$, we have
	\begin{align*}
	0&=\rho(f_1+f_2,g_1)\mu(g_1, [f_1,f_2,g_2]_\mathcal{B}) +\mu([f_1,f_2,g_1]_\mathcal{B}, g_2)\\
	&\quad +\rho(f_1+f_2,g_1+g_2)\rho(g_1+g_2,f_1)\mu(f_1, [g_1, g_2, f_2]_\mathcal{B})+\rho(f_1+f_2, g_1+g_2)\mu([g_1, g_2, f_1]_\mathcal{B}, f_2).
	\end{align*}
\end{lemma}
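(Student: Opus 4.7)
The plan is to read the identity as the statement that $\mu$ respects the $\rho$-antisymmetry of the bracket $[\,\cdot\,,\,\cdot\,]_{\mathcal{L}}$ on the fundamental set $\mathcal{L}=\wedge^{2}\mathcal{B}_{\rho}$. Concretely, I would write $X=(f_{1},f_{2})$ and $Y=(g_{1},g_{2})$ so that $|X|=|f_{1}|+|f_{2}|$, $|Y|=|g_{1}|+|g_{2}|$, and aim to establish
\[
\mu\bigl([X,Y]_{\mathcal{L}}\bigr)+\rho(X,Y)\,\mu\bigl([Y,X]_{\mathcal{L}}\bigr)=0.
\]
Once this is in hand, plugging in the definition \eqref{123} of the bracket on $\mathcal{L}$ and simplifying the four resulting terms is purely mechanical and yields exactly the displayed identity.

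To obtain the boxed identity above, first I would apply \eqref{f} directly to $(X,Y)$, getting
\[
\mu\bigl([X,Y]_{\mathcal{L}}\bigr)=\mu(X)\mu(Y)-\rho(X,Y)\,\mu(Y)\mu(X),
\]
and then apply \eqref{f} once more after interchanging the roles of $X$ and $Y$:
\[
\mu\bigl([Y,X]_{\mathcal{L}}\bigr)=\mu(Y)\mu(X)-\rho(Y,X)\,\mu(X)\mu(Y).
\]
Multiplying the second relation by $\rho(X,Y)$ and adding to the first, the cross-terms collapse using $\rho(X,Y)\rho(Y,X)=1$ (a consequence of the two-cycle axioms), and the right-hand side becomes zero.

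Finally, expanding $[X,Y]_{\mathcal{L}}$ and $[Y,X]_{\mathcal{L}}$ by \eqref{123} produces four $\mu$-terms whose prefactors are $1$, $\rho(f_{1}+f_{2},g_{1})$, $\rho(f_{1}+f_{2},g_{1}+g_{2})$, and $\rho(f_{1}+f_{2},g_{1}+g_{2})\rho(g_{1}+g_{2},f_{1})$ respectively, matching the statement of the lemma term by term.

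The computation is essentially routine; the only point requiring mild care is the bookkeeping of two-cycle factors when combining the two instances of \eqref{f}, in particular verifying that $\rho(X,Y)\rho(Y,X)=1$ so that the quadratic-in-$\mu$ terms cancel exactly. No use of axioms \eqref{g} or \eqref{h} is needed.
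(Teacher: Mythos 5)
Your proposal is correct and follows essentially the same route as the paper: both proofs group the four terms into $\mu\bigl([X,Y]_{\mathcal{L}}\bigr)$ and $\rho(X,Y)\,\mu\bigl([Y,X]_{\mathcal{L}}\bigr)$, apply the representation axiom \eqref{f} to each, and cancel using $\rho(X,Y)\rho(Y,X)=1$. Your framing via antisymmetry of the $\mathcal{L}$-bracket is just a slightly more structured packaging of the identical computation.
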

\begin{proof}
	Using Definition \ref{a5}, we obtain 
	\begin{align*}
	&\rho(f_1+f_2,g_1)\mu(g_1, [f_1,f_2,g_2]_\mathcal{B}) +\mu([f_1,f_2,g_1]_\mathcal{B}, g_2)\\
	&\quad +\rho(f_1+f_2, g_1+g_2)\mu([g_1, g_2, f_1]_\mathcal{B}, f_2)\\
	&\quad +\rho(f_1+f_2,g_1+g_2)\rho(g_1+g_2,f_1)\mu(f_1, [g_1, g_2, f_2]_\mathcal{B})\\
	&=\mu(f_1, f_2)\mu(g_1, g_2)-\rho(f_1+f_2, g_1+g_2)\mu(g_1, g_2)\mu(f_1, f_2)\\
	&\quad +\rho(f_1+f_2, g_1+g_2)\mu(g_1, g_2)\mu(f_1, f_2)-\mu(f_1, f_2)\mu(g_1, g_2)=0.
	\end{align*}
\end{proof}
Consider the pair $(\mathcal{B}_{\rho}, V)$, where $V$ is a $G$-graded vector space. The direct sum 
$\mathcal{B}\oplus V$ is also graded and $(\mathcal{B}\oplus V)_a=\mathcal{B}_a\oplus V_a$. Furthermore, a homogeneous element of $\mathcal{B}\oplus V$ has the form $f+v$ such that $f\in \mathcal{B}_{\rho}$ and $v\in V$ and $|f+v|=|f|=|v|$.\\

In the following, we discuss some properties of 3-$\rho$-Lie algebras representations. Actually, the following theorem examines the relationship between the 3-$\rho$-Lie algebra representation and the existence of 3-$\rho$-Lie algebra structure on the graded vector space $\mathcal{B}\oplus V$:
\begin{proposition}\label{zahra}
The pair $(V,\mu)$ is a representation of $\mathcal{B}_{\rho}$  if and only if  $(\mathcal{B}\oplus V, [.,.,.]_{\mathcal{B}\oplus V})$ is a 3-$\rho$-Lie algebra, where the structure $[.,.,.]_{\mathcal{B}\oplus V}$ is given by
	$$[f_1+v_1, f_2+v_2, f_3+v_3]^{\mu}_{\mathcal{B}\oplus V} = [f_1, f_2, f_3]_\mathcal{B} +\mu(f_1, f_2)v_3 +\rho(f_1, f_2+f_3)\mu(f_2,f_3)v_1+\rho(f_1+f_2,f_3)\mu(f_3,f_1)v_2,$$
	for homogeneous elements $f_1, f_2, f_3\in \mathcal{B}_{\rho}$ and $v_1, v_2, v_3\in V$.
\end{proposition}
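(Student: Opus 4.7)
The plan is to verify the three axioms of a 3-$\rho$-Lie algebra on $\mathcal{B}\oplus V$ when $(V,\mu)$ is a representation, and conversely to read off the representation conditions \eqref{f}, \eqref{g}, \eqref{h} by specialization. The grading condition is immediate: each of the four summands defining $[f_1+v_1, f_2+v_2, f_3+v_3]^\mu_{\mathcal{B}\oplus V}$ is homogeneous of $G$-degree $|f_1|+|f_2|+|f_3|$, since $\mu(f_a,f_b)$ acts on $V$ as a homogeneous endomorphism of degree $|f_a|+|f_b|$. The $\rho$-skew-symmetry of $[\cdot,\cdot,\cdot]^\mu_{\mathcal{B}\oplus V}$ reduces on the $\mathcal{B}$-component to the $\rho$-skew-symmetry of $[\cdot,\cdot,\cdot]_\mathcal{B}$, and on the $V$-component to the relation $\mu(f,g) = -\rho(f,g)\mu(g,f)$ built into the definition of $\mu$ on $\wedge^2\mathcal{B}_\rho$; a direct comparison of $\rho$-factors completes this check.

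The heart of the proof is the $\rho$-fundamental identity. By trilinearity in each of the five arguments $f_1+v_1, f_2+v_2, g_1+w_1, g_2+w_2, g_3+w_3$, it suffices to verify the identity for each atomic assignment of each input to its $\mathcal{B}$-part or its $V$-part. A decisive simplification comes from the structure of the bracket: every summand of $[x_1,x_2,x_3]^\mu_{\mathcal{B}\oplus V}$ carrying a $V$-component of one input pairs it with the $\mathcal{B}$-components of the other two inputs, so if two or more of $x_1, x_2, x_3$ are purely in $V$, the bracket vanishes. A short case check then confirms that both sides of the fundamental identity vanish for every atomic sub-identity with two or more of the five arguments assigned to $V$, so all such cases are trivial.

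The remaining sub-identities split into the single $k=0$ case (all five inputs in $\mathcal{B}$), which is exactly the fundamental identity for $\mathcal{B}_\rho$, and the five $k=1$ cases obtained by letting exactly one of $v_1, v_2, w_1, w_2, w_3$ be nonzero. Expanding both sides for each $k=1$ assignment, the $\mathcal{B}$-component again reduces to the $\mathcal{B}_\rho$ fundamental identity and drops out; the $V$-component, after factoring out the common $\rho$-prefactor multiplying the single nonzero $v_i$ or $w_j$, yields exactly one of the representation conditions: the three assignments with $w_i$ active each give \eqref{f} (up to relabeling among $g_1, g_2, g_3$), the $v_1$-assignment gives \eqref{h}, and the $v_2$-assignment gives \eqref{g}. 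Both directions then follow simultaneously: if $(V,\mu)$ is a representation then every atomic sub-identity holds, so $[\cdot,\cdot,\cdot]^\mu_{\mathcal{B}\oplus V}$ satisfies the fundamental identity; conversely, the five $k=1$ specializations read off \eqref{f}, \eqref{g}, \eqref{h} directly. The main obstacle is bookkeeping the long products of $\rho$-factors arising in the $k=1$ expansions, which must collapse, using only $\rho(a+b,c) = \rho(a,c)\rho(b,c)$ and $\rho(a,b)\rho(b,a) = 1$, to the prefactors appearing in the representation conditions.
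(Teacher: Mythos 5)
Your proposal is correct and is precisely the ``direct computation'' that the paper's proof invokes without writing out: the reduction by multilinearity to atomic cases, the observation that the bracket (and hence every term of the $\rho$-fundamental identity) vanishes when two or more slots lie in $V$, and the identification of the five single-$V$ specializations with \eqref{f}, \eqref{g}, \eqref{h} is exactly the intended argument, and it yields both implications at once as you say. No substantive difference from the paper's approach, only a welcome increase in explicitness.
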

\begin{proof}
	The proof of this proposition follows by a direct computation and the definition of the representation.
\end{proof}
In the following, we describe the dual representations and coadjoint representations of 3-$\rho$-Lie algebras such as the adjoint representations. At first, we recall the dual space of a graded vector space and explore the graded space of direct sum of a graded vector space and it's dual.\\

Let $\mathcal{B}=\oplus_{a\in G}\mathcal{B}_a$ be a $G$-graded space. Then $\mathcal{B}^{\star}$, the dual space of $\mathcal{B}$, is also $G$-graded space and $$\mathcal{B}^{\star}_a=\{\alpha\in \mathcal{B}^{\star}| \alpha(f)=0,~~\forall~~f:~~|f|\neq -a\}.$$ Furthermore, the direct sum 
$\mathcal{B}\oplus \mathcal{B}^{\star}$ is $G$-graded, where
$$\mathcal{B}\oplus \mathcal{B}^{\star}=\oplus_{a\in G}(\mathcal{B}\oplus \mathcal{B}^{\star})_a=\oplus_{a\in G}(\mathcal{B}_a\oplus \mathcal{B}^{\star}_a),$$
A homogeneous element of $\mathcal{B}\oplus \mathcal{B}^{\star}$ has the form $f+\alpha$ such that $f\in \mathcal{B}$ and $\alpha\in \mathcal{B}^{\star}$ and $|f+\alpha|=|f|=|\alpha|$.
\begin{proposition}\label{a13}
Let $(V,\mu)$ be a $\rho$-skew-symmetric representation of $\mathcal{B}_{\rho}$ and let $V^{\star}$ be the dual vector space of $V$. Defining the linear map $\mu^*:\mathcal{B}_{\rho}\times \mathcal{B}_{\rho}\longrightarrow {\rm gl(V^{\star})}$ by $\mu^*(f_1,f_2)(\varrho)=-\rho(f_1+f_2,\varrho)\varrho\circ\mu(f_1,f_2)$, where $f_1,f_2\in \mathcal{B},~~\varrho\in V^{\star}$,  $(V^{\star},\mu^*)$ defines a representation of $\mathcal{B}_{\rho}$, which is called the dual representation. 
\end{proposition}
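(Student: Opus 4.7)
The plan is to verify the three defining identities \eqref{f}, \eqref{g}, \eqref{h} of Definition~\ref{a5} for the pair $(V^*,\mu^*)$ by direct computation, dualizing each from the corresponding identity for $(V,\mu)$. The starting observation is that for a homogeneous $\varrho\in V^*$ of degree $|\varrho|$, the element $\mu^*(f_1,f_2)(\varrho)$ is homogeneous of degree $|f_1|+|f_2|+|\varrho|$, so compositions of $\mu^*$-operators are well-defined on homogeneous elements and their degrees can be tracked at each stage. Moreover, a quick check shows that $\mu^*$ inherits $\rho$-skew-symmetry from $\mu$: combining $\mu^*(f_1,f_2)(\varrho)=-\rho(f_1+f_2,\varrho)\,\varrho\circ\mu(f_1,f_2)$ with $\mu(f_1,f_2)=-\rho(f_1,f_2)\mu(f_2,f_1)$ yields $\mu^*(f_1,f_2)=-\rho(f_1,f_2)\mu^*(f_2,f_1)$.

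For identity \eqref{f}, I would apply both sides to a homogeneous $\varrho$. Using the definition of $\mu^*$, the left-hand side reads
\begin{equation*}
\mu^*[(f_1,f_2),(g_1,g_2)]_{\mathcal{L}}(\varrho)=-\rho(f_1+f_2+g_1+g_2,\varrho)\,\varrho\circ\mu[(f_1,f_2),(g_1,g_2)]_{\mathcal{L}},
\end{equation*}
which, by \eqref{f} for $\mu$, equals $-\rho(f_1+f_2+g_1+g_2,\varrho)\,\varrho\circ\bigl(\mu(f_1,f_2)\mu(g_1,g_2)-\rho(f_1+f_2,g_1+g_2)\mu(g_1,g_2)\mu(f_1,f_2)\bigr)$. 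For the right-hand side, I expand $\mu^*(f_1,f_2)\mu^*(g_1,g_2)(\varrho)$ and $\mu^*(g_1,g_2)\mu^*(f_1,f_2)(\varrho)$ by two successive applications of the definition of $\mu^*$; the accumulated $\rho$-factors collapse via the cocycle identities $\rho(a+b,c)=\rho(a,c)\rho(b,c)$ and $\rho(a,b)\rho(b,a)=1$ to a common prefactor $\rho(f_1+f_2+g_1+g_2,\varrho)$, yielding expressions that match the left-hand side term by term.

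Identities \eqref{g} and \eqref{h} follow by the same dualization technique: apply both sides to $\varrho$, expand via the definition of $\mu^*$, and collapse accumulated $\rho$-factors to a common prefactor. The key subtlety here is that, since $\mu^*$ acts on $\varrho$ by right-composition, a composition $\mu^*(A)\mu^*(B)(\varrho)$ produces, up to a $\rho$-factor, $\varrho\circ\mu(B)\mu(A)$ with the order of the $\mu$-factors reversed. Thus the right-hand side of \eqref{g} for $\mu^*$ naturally produces terms $\varrho\circ\mu(g_3,f)\mu(g_1,g_2)$, etc., rather than the order $\mu(g_1,g_2)\mu(g_3,f)$ appearing in \eqref{g} for $\mu$. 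The bridge is the $\rho$-skew-symmetry of $\mu$: a short computation using $\mu(f,g)=-\rho(f,g)\mu(g,f)$ and the $\rho$-skew-symmetry of the 3-bracket shows that \eqref{g} and \eqref{h} for $\mu$ are equivalent modulo skew-symmetry, and the order swap produced by dualization exchanges them. Consequently, \eqref{g} for $\mu^*$ reduces to \eqref{h} for $\mu$ (and vice versa), up to carefully collected $\rho$-prefactors.

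The principal obstacle is the meticulous bookkeeping of $\rho$-factors. A disciplined strategy — pulling every $\rho$-factor outside each term, reducing all factors to a single common prefactor via the cocycle axioms, and then matching the resulting bracketed $\varrho\circ(\cdots)$ expressions term by term against the corresponding identity for $\mu$ — renders the verification straightforward though lengthy. Beyond the three defining identities of $(V,\mu)$, the $\rho$-skew-symmetry of $\mu$, and the two-cycle axioms of $\rho$, no additional machinery is needed.
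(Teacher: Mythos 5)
Your verification of identity \eqref{f} for $\mu^*$ is sound: the commutator structure of \eqref{f} survives the order reversal produced by dualization, and the $\rho$-factors collapse exactly as you describe. The gap is in your treatment of \eqref{g} and \eqref{h}. You correctly observe that $\mu^*(A)\mu^*(B)(\varrho)$ is proportional to $\varrho\circ\mu(B)\mu(A)$, so the dualized right-hand side of \eqref{g} consists of terms of the form $\mu(g_k,f)\mu(g_i,g_j)$, with the ``pair'' operator $\mu(g_i,g_j)$ acting \emph{first} (rightmost). But your proposed bridge --- that \eqref{g} for $\mu^*$ reduces to \eqref{h} for $\mu$ --- does not work: for a $\rho$-skew-symmetric representation, \eqref{g} and \eqref{h} are in fact \emph{equivalent} to each other (apply $\mu(g,f_k)=-\rho(g,f_k)\mu(f_k,g)$ to the rightmost factor of each term of \eqref{h} and $\rho$-skew-symmetry to its left-hand side, and \eqref{h} becomes \eqref{g} verbatim), and in \emph{both} identities the pair operator $\mu(f_i,f_j)$ sits on the \emph{left} of each composition. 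Skew-symmetry of $\mu$ only permutes the two arguments inside a single $\mu(\cdot,\cdot)$; it cannot reverse the order of a composition of two operators. Hence the ``order swap exchanges \eqref{g} and \eqref{h}'' step fails, and what you actually need is a genuinely new reversed-order identity, e.g.
\begin{equation*}
-\mu([g_1,g_2,g_3]_\mathcal{B},f)=\rho(g_1+g_2,g_3+f)\mu(g_3,f)\mu(g_1,g_2)+\rho(g_2+g_3,f)\mu(g_1,f)\mu(g_2,g_3)+\rho(g_1+g_2,g_3)\rho(g_3+g_1,g_2+f)\mu(g_2,f)\mu(g_3,g_1),
\end{equation*}
which is not an instance of \eqref{g} or \eqref{h} under relabeling.

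To establish such an identity you must commute each product $\mu(g_i,g_j)\mu(g_k,f)$ past the other factor using \eqref{f}; this generates correction terms of the form $\mu([g_i,g_j,g_k]_\mathcal{B},f)$ and $\mu(g_k,[g_i,g_j,f]_\mathcal{B})$, and the substance of the proof is showing that these corrections cancel against the left-hand side. That cancellation is precisely what Lemma~\ref{aa5} packages, and it is why the paper's proof invokes Lemma~\ref{aa5} alongside Definition~\ref{a5}; your argument never uses it and offers no substitute for it. So the proposal as written establishes \eqref{f} for $\mu^*$ but leaves \eqref{g} and \eqref{h} unproved.
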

\begin{proof}
	By invoking Definition \ref{a5} and Lemma \ref{aa5}, we can prove this proposition by direct computations.
\end{proof}
\begin{remark}
An example of the dual representation is the linear map $ad^{\star}:\mathcal{B}_{\rho}\times \mathcal{B}_{\rho}\longrightarrow {\rm gl(\mathcal{B}^{\star}_{\rho})}$ defined by $$ad^{\star}(f_1,f_2)(\varrho)(f_3)=-\rho(f_1+f_2,\varrho)\varrho[f_1,f_2,f_3]_\mathcal{B}=-\rho(f_1+f_2,\varrho)\varrho(ad(f_1,f_2)(f_3),$$ for $f_1,f_2,f_3\in Hg(\mathcal{B}_{\rho})$ and $\varrho\in \mathcal{B}^{\star}_{\rho}$, which is a representation of $\mathcal{B}$ on $\mathcal{B}^*_{\rho}$ ($ad^{\star}$ is called coadjoint representation). Since $ad^*$ is a representation of $\mathcal{B}_{\rho}$, then by Proposition \ref{zahra}, $\mathcal{B}\oplus \mathcal{B}^*$ is a 3-$\rho$-Lie algebra by the following structure
\begin{align}\label{dd}
[f_1+\alpha_1, f_2+\alpha_2, f_3+\alpha_3]^{ad^*}_{\mathcal{B}\oplus \mathcal{B}^*}&=[f_1,f_2,f_3]_\mathcal{B}+ad^*(f_1,f_2)\alpha_3\\
&\ \ \ +\rho(f_1, f_2+f_3)ad^*(f_2,f_3)\alpha_1+\rho(f_1+f_2,f_3)ad^*(f_3,f_1)\alpha_2.\nonumber
\end{align}
Considering $\varphi:\mathcal{B}\oplus \mathcal{B}^*\times \mathcal{B}\oplus \mathcal{B}^*\longrightarrow\mathbb{K}$ by $\varphi(f+\alpha,g+\beta)=\alpha(g)+\rho(f,g)\beta(f)$, we have a metric 3-$\rho$-Lie algebra $(\mathcal{B}\oplus \mathcal{B}^*, [.,.,.]^{ad^*}_{\mathcal{B}\oplus \mathcal{B}^*},\rho,\varphi)$.
\end{remark}
\section{3-pre-$\rho$-Lie algebra}
In \cite{CLY} the notion of 3-pre-Lie algebras was introduced as a generalization of pre-Lie algebras. In this section, we introduce the notion of 3-pre-$\rho$-Lie algebras and investigate their properties. Also, the 3-pre-$\rho$-Lie algebras representations and the phase spaces of 3-pre-$\rho$-Lie algebras will be examined.
\begin{definition}\label{a6}
	A triple $(\mathcal{B},\{.,.,.\},\rho)$ is called a 3-pre-$\rho$-Lie algebra if the following equalities hold
	\begin{align}
	|\{f_1, f_2, f_3\}|&=|f_1| +|f_2| +|f_3|,\nonumber\\
	\{f_1, f_2, f_3\}&=-\rho(f_1,f_2)\{f_2, f_1, f_3\},\\
	\{f_1,f_2, \{g_1, g_2,g_3\}\}&=\{[f_1,f_2,g_1]_c, g_2, g_3\} +\rho(f_1+f_2,g_1)\{g_1, [f_1,f_2,g_2]_c, g_3\}\label{a2}\\
	&\ \ \  +\rho(f_1+f_2,g_1+g_2)\{g_1, g_2, \{f_1,f_2,g_3\}\},\nonumber\\
	\{[f_1,f_2,f_3]_c, g_1, g_2\}&=\{f_1, f_2, \{f_3, g_1, g_2\}\} +\rho(f_1,f_2+f_3)\{f_2, f_3, \{f_1, g_1,g_2\}\}\label{a3}\\
	&\ \ \  +\rho(f_1+f_2,f_3)\{f_3, f_1, \{f_2,g_1,g_2\}\},\nonumber
	\end{align}
	where $\mathcal{B}$ is a $G$-graded vector space, $\{.,.,.\}:\otimes^3 \mathcal{B}\longrightarrow \mathcal{B}$ is a trilinear map, $\rho$ is a two-cycle and 
	\begin{align*}
	[f_1, f_2, f_3]_c=\{f_1, f_2, f_3\}+\rho(f_1, f_2+f_3)\{f_2, f_3, f_1\}+\rho(f_1+f_2,f_3)\{f_3, f_1, f_2\}.
	\end{align*}
	To simplify notation, we write $\mathcal{B}_{\{\}}$ instead of $(\mathcal{B}, \{.,.,.\}, \rho)$.
\end{definition}
\begin{proposition}\label{a9}
$([.,.,.]_c,\rho)$ give a 3-$\rho$-Lie algebra structure, when $(\mathcal{B}, \{.,.,.\}, \rho)$ is a 3-pre-$\rho$-Lie algebra. $(\mathcal{B},[.,.,.]_c,\rho)$ is called the sub-adjacent 3-$\rho$-Lie algebra of $\mathcal{B}$ and is denoted by $\mathcal{B}^c_{\rho}$. 
\end{proposition}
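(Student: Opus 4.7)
The plan is to verify the three defining properties of a 3-$\rho$-Lie algebra for $(\mathcal{B},[.,.,.]_c,\rho)$ in turn. The degree condition $|[f_1,f_2,f_3]_c|=|f_1|+|f_2|+|f_3|$ is immediate from the definition of $[.,.,.]_c$ as a $\rho$-weighted cyclic sum of $\{.,.,.\}$, since $\{.,.,.\}$ is trilinear and preserves $G$-degree.

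For $\rho$-skew-symmetry, I would first check $[f_1,f_2,f_3]_c=-\rho(f_1,f_2)[f_2,f_1,f_3]_c$ by expanding both sides, applying the partial skew-symmetry $\{h_1,h_2,h_3\}=-\rho(h_1,h_2)\{h_2,h_1,h_3\}$ to each of the three terms of $[f_2,f_1,f_3]_c$, and simplifying the resulting scalar factors using the cocycle relations $\rho(a,b)\rho(b,a)=1$ and $\rho(a+b,c)=\rho(a,c)\rho(b,c)$. The second skew-symmetry $[f_1,f_2,f_3]_c=-\rho(f_2,f_3)[f_1,f_3,f_2]_c$ is obtained by the same kind of expansion, again using only the partial skew-symmetry of $\{.,.,.\}$ and the cocycle relations; no other axioms intervene at this stage.

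The heart of the argument is the $\rho$-fundamental identity. My plan is to expand both sides of
\[
[f_1,f_2,[g_1,g_2,g_3]_c]_c=[[f_1,f_2,g_1]_c,g_2,g_3]_c+\rho(f_1+f_2,g_1)[g_1,[f_1,f_2,g_2]_c,g_3]_c+\rho(f_1+f_2,g_1+g_2)[g_1,g_2,[f_1,f_2,g_3]_c]_c
\]
by repeatedly substituting the definition of $[.,.,.]_c$ until every summand has the form $\{a,b,\{c,d,e\}\}$ with $a,b,c,d,e$ taken from $\{f_1,f_2,g_1,g_2,g_3\}$. On the left-hand side, each of the nine resulting terms (three cyclic positions on the outside, three positions of the $g_i$'s on the inside) is rewritten using axiom \eqref{a2}, which trades the outermost $\{f_1,f_2,-\}$ for three new terms governed by $[f_1,f_2,g_i]_c$. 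On the right-hand side, the outer brackets $[[f_1,f_2,g_1]_c,g_2,g_3]_c$, and so on, are expanded using both the definition of $[.,.,.]_c$ and axiom \eqref{a3}, the latter turning expressions $\{[f_1,f_2,g_i]_c,\cdot,\cdot\}$ into sums of nested expressions $\{\cdot,\cdot,\{\cdot,\cdot,\cdot\}\}$. After aligning the slots of the $\{.,.,.\}$ using the partial skew-symmetry, every term on both sides takes the shape $\{g_{\sigma(1)},g_{\sigma(2)},\{f_1,f_2,g_{\sigma(3)}\}\}$ for some permutation $\sigma$, up to a $\rho$-scalar, and matching coefficients finishes the proof.

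The main obstacle is purely bookkeeping: tracking the many $\rho$-factors that arise when one permutes arguments of $\{.,.,.\}$ and commutes scalars past terms whose degrees involve $f_1+f_2$ and the various $g_i$'s. I do not expect conceptual difficulty, since axioms \eqref{a2} and \eqref{a3} are tailored precisely so that the two expansions agree term by term; but organizing the $\rho(\cdot,\cdot)$ coefficients systematically, for instance by recording how the ``moving'' pair $(f_1,f_2)$ travels past each $g_i$ via repeated use of $\rho(a+b,c)=\rho(a,c)\rho(b,c)$, will be essential to keep the computation tractable.
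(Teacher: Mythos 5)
Your proposal is correct and follows essentially the same route as the paper: the paper's proof likewise verifies the $\rho$-fundamental identity for $[.,.,.]_c$ by expanding via the definition of $[.,.,.]_c$ together with axioms \eqref{a2} and \eqref{a3} (and the skew-symmetry and degree conditions are the same routine checks you describe). If anything, your plan is more explicit than the paper's one-line computation, and your observation that the $\rho$-skew-symmetry of $[.,.,.]_c$ in both adjacent pairs follows from only the partial skew-symmetry of $\{.,.,.\}$ and the cocycle relations is a correct detail the paper leaves implicit.
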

\begin{proof}
	Using the definition of $[.,.,.]_c$, \eqref{a2} and \eqref{a3}, we have
	\begin{align*}
	&[f_1, f_2, [f_3, f_4, f_5]_c]_c-[[f_1, f_2, f_3]_c, f_4, f_5]_c-\rho(f_1+f_2,f_3)[f_3, [f_1, f_2, f_4]_c,f_5]_c\\
	&\quad -\rho(f_1+f_2,g_1+g_2)[f_3, f_4, [f_1, f_2, f_5]_c]_c=0.
	\end{align*}
\end{proof}
Note that, $\mathcal{B}_{\{\}}$ is called the compatible 3-$\rho$-pre-Lie algebra structure on the 3-$\rho$-Lie algebra $\mathcal{B}^c_{\rho}$.\\
The following lemma gives a representation of $\mathcal{B}^c_{\rho}$ by the left multiplication:
\begin{lemma}\label{111}
Defining the left multiplication $L:\wedge^2\mathcal{B}_{\{\}}\longrightarrow gl(\mathcal{B}_{\{\}})$ by $L(f,g)h=\{f,g,h\}$, $(\mathcal{B},L)$ is a representation of $\mathcal{B}^c_{\rho}$.
\end{lemma}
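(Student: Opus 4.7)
The plan is to verify the three defining identities \eqref{f}, \eqref{g} and \eqref{h} of Definition~\ref{a5} for the map $L$; each will be read off from the axioms of the 3-pre-$\rho$-Lie algebra $\mathcal{B}_{\{\}}$ together with the $\rho$-skew-symmetry of $\{.,.,.\}$ in the first two slots. First, I would verify \eqref{f} by evaluating both sides on an arbitrary homogeneous $h$ and unfolding the bracket of the fundamental set via \eqref{123}; the identity reduces to
\[ \{f_1,f_2,\{g_1,g_2,h\}\}-\rho(f_1+f_2,g_1+g_2)\{g_1,g_2,\{f_1,f_2,h\}\}=\{[f_1,f_2,g_1]_c,g_2,h\}+\rho(f_1+f_2,g_1)\{g_1,[f_1,f_2,g_2]_c,h\}, \]
which is exactly the $\rho$-fundamental identity \eqref{a2} (with $g_3=h$) after moving the last term on its right-hand side to the left.

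Next, condition \eqref{g} evaluated on $h$ amounts to
\[ \{[g_1,g_2,g_3]_c,f,h\}=\{g_1,g_2,\{g_3,f,h\}\}+\rho(g_1,g_2+g_3)\{g_2,g_3,\{g_1,f,h\}\}+\rho(g_1+g_2,g_3)\{g_3,g_1,\{g_2,f,h\}\}, \]
which is identity \eqref{a3} applied with the outer two arguments being $(f,h)$. So far no real work is needed.

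The main obstacle is \eqref{h}, because $[f_1,f_2,f_3]_c$ appears in the \emph{second} slot of $\{g,[f_1,f_2,f_3]_c,h\}$, not the first, so \eqref{a3} does not apply directly. My plan here is to first use $\rho$-skew-symmetry in the first two slots to obtain $\{g,[f_1,f_2,f_3]_c,h\}=-\rho(g,f_1+f_2+f_3)\{[f_1,f_2,f_3]_c,g,h\}$, then expand the right-hand side by \eqref{a3}, and finally restore $g$ to the first slot of each inner triple product $\{f_i,g,h\}$ by applying skew-symmetry once more, each time producing a factor $-\rho(f_i,g)$. The two minus signs cancel, and the resulting prefactor of each term $\{f_j,f_k,\{g,f_i,h\}\}$ becomes $\rho(g,f_1+f_2+f_3)\rho(f_i,g)$ times the coefficient coming from \eqref{a3}. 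Using the two-cycle relations $\rho(g,f_i)\rho(f_i,g)=1$ and $\rho(g,f_1+f_2+f_3)=\rho(g,f_i)\rho(g,f_1+f_2+f_3-f_i)$, this collapses to $\rho(g,f_1+f_2+f_3-f_i)$ times the coefficient from \eqref{a3}. A direct inspection then shows that the three matching coefficients are precisely $\rho(g,f_1+f_2)$, $\rho(g,f_2+f_3)\rho(f_1,f_2+f_3)$ and $\rho(g,f_1+f_3)\rho(f_1+f_2,f_3)$, reproducing \eqref{h} term-by-term; this coefficient-bookkeeping, though routine, is the only nontrivial step of the proof.
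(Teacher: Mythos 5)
Your proposal is correct and follows essentially the same route as the paper: the paper verifies \eqref{f} by exactly your reduction to \eqref{a2} with $g_3=h$ and then dismisses the remaining identities with ``the other ones prove in a similar way.'' Your treatment of \eqref{h} --- swapping $[f_1,f_2,f_3]_c$ into the first slot, applying \eqref{a3}, swapping back, and checking that $\rho(g,f_1+f_2+f_3)\rho(f_i,g)$ collapses to $\rho(g,f_1+f_2+f_3-f_i)$ --- is exactly the omitted bookkeeping, and your coefficients match \eqref{h} term-by-term.
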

\begin{proof}
	By Definition \ref{a5}, we have
	\begin{align*}
	L[(f_1,f_2), (g_1,g_2)]_{\mathcal{L}}(h)&=L([f_1,f_2,g_1]_c,g_2)(h)+\rho(f_1+f_2,g_1)L(g_1,[f_1,f_2,g_2]_c)(h)\\
	&=\{[f_1,f_2,g_1]_c,g_2,h\}+\rho(f_1+f_2,g_1)\{g_1,[f_1,f_2,g_2]_c,h\}\\
	&=\{f_1,f_2,[g_1,g_2,h]_c\}-\rho(f_1+f_2,g_1+g_2)\{g_1,g_2,\{f_1,f_2,h\}\}\\
	&=L(f_1,f_2)L(g_1,g_2)(h)-\rho(f_1+f_2,g_1+g_2)L(g_1,g_2)L(f_1,f_2)(h).
	\end{align*}
	The other ones prove in a similar way.
\end{proof}
\textbf{Representation of 3-$\rho$-pre-Lie algebras:}
Here, we are going to represent the representation theory of 3-$\rho$-pre-Lie algebras. Also, the dual representation will be given.
\begin{definition}\label{a7}
 A pair $(\mu, \tilde{\mu})$ is called a representation of  $\mathcal{B}_{\{\}}$ on $G$-graded vector space $V$ if for all homogeneous elements $f_1, f_2, f_3, f_4\in Hg(\mathcal{B})$, the following equalities hold
	\begin{align*}
	\tilde{\mu}(f_1, \{f_2, f_3, f_4\})&=\rho(f_1, f_2+f_3)\mu(f_2, f_3)\tilde{\mu}(f_1, f_4)+\rho(f_1, f_3+f_4)\rho(f_2, f_3+f_4)\tilde{\mu}(f_3, f_4)\tilde{\mu}(f_1, f_2)\\
	&\ \ \ -\rho(f_1, f_2+f_4)\rho(f_3, f_4)\tilde{\mu}(f_2, f_4)\tilde{\mu}(f_1, f_3)+\rho(f_1+f_2,f_3+f_4)\tilde{\mu}(f_3, f_4)\mu(f_1, f_2)\\
	&\ \ \ +\rho(f_1, f_2+f_3+f_4)\rho(f_3, f_4)\tilde{\mu}(f_2, f_4)\tilde{\mu}(f_3, f_1)\\
	&\ \ \ -\rho(f_1, f_2+f_4)\rho(f_3, f_4)\tilde{\mu}(f_2, f_4)\mu(f_1, f_3)\\
	&\ \ \ -\rho(f_2, f_3+f_4)\rho(f_1, f_2+f_3+f_4)\tilde{\mu}(f_3, f_4)\tilde{\mu}(f_2,f_1),\\
	&\\
	\mu(f_1, f_2)\tilde{\mu}(f_3, f_4)&=\rho(f_1+f_2, f_3+f_4)\tilde{\mu}(f_3, f_4)\mu(f_1, f_2)-\rho(f_1+f_2, f_3+f_4)\rho(f_1, f_2)\tilde{\mu}(f_3, f_4)\tilde{\mu}(f_2, f_1)\\
	&\ \ \ +\rho(f_1+f_2, f_3+f_4)\tilde{\mu}(f_3, f_4)\tilde{\mu}(f_1, f_2)+\tilde{\mu}([f_1, f_2, f_3]_c, f_4)\\
	&\ \ \ +\rho(f_1+f_2, f_3)\tilde{\mu}(f_3, \{f_1, f_2, f_4\}),\\
	&\\
	\tilde{\mu}([f_1, f_2, f_3]_c, f_4)&=\mu(f_1, f_2)\tilde{\mu}(f_3, f_4)+\rho(f_1, f_2+f_3)\mu(f_2, f_3)\tilde{\mu}(f_1, f_4)+\rho(f_1+f_2,f_3)\mu(f_3, f_1)\tilde{\mu}(f_2, f_4),\\
	&\\
	\tilde{\mu}(f_3, f_4)\mu(f_1, f_2)&=\rho(f_1, f_2)\tilde{\mu}(f_3, f_4)\tilde{\mu}(f_2, f_1)-\tilde{\mu}(f_3, f_4)\tilde{\mu}(f_1, f_2)+\rho(f_3+f_4, f_1+f_2)\mu(f_1, f_2)\tilde{\mu}(f_3, f_4)\\
	&\ \ \ -\rho(f_3+f_4, f_1+f_2)\rho(f_1, f_2)\tilde{\mu}(f_2, \{f_1, f_3, f_4\})+\rho(f_3+f_4, f_1+f_2)\tilde{\mu}(f_1, \{f_2, f_3, f_4\}),
	\end{align*}
where $\mu:\wedge^2\mathcal{B}^c_{\rho}\longrightarrow gl(V)$ is a representation of the 3-$\rho$-Lie algebra $\mathcal{B}^c_{\rho}$ and $\tilde{\mu}:\otimes^2\mathcal{B}\longrightarrow gl(V)$ is a bilinear map.
\end{definition}
In the following lemma, we construct a representation of $\mathcal{B}_{\{\}}$ by the left and right multiplication. The left and right multiplication are two bilinear maps of the form $L,R:\otimes^2\mathcal{B}_{\{\}}\longrightarrow gl(\mathcal{B}_{\{\}})$ which are defined by $L(f,g)h=\{f,g,h\}$ and $R(f,g)h=\rho(f+g,h)\{h,f,g\}$, respectively.
\begin{lemma}
	Let $\mathcal{B}_{\{\}}$ be a 3-pre-$\rho$-Lie algebra. Then the pair $(L, R)$ is a representation of $\mathcal{B}_{\{\}}$ on itself (this representation is called the regular representation).
\end{lemma}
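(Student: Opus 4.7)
The plan is to unpack Definition~\ref{a7} with $\mu=L$ and $\tilde{\mu}=R$ and reduce each of the four identities, after applying both sides to a test element $h\in Hg(\mathcal{B}_{\{\}})$, to one of the defining axioms of a 3-pre-$\rho$-Lie algebra (equations \eqref{a2} and \eqref{a3}) together with the $\rho$-skew-symmetry in the first two slots of $\{.,.,.\}$. The piece requiring $\mu$ to be a representation of the sub-adjacent 3-$\rho$-Lie algebra $\mathcal{B}^c_{\rho}$ is already available: this is exactly the content of Lemma~\ref{111}, so nothing new needs to be checked there.

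For the four mixed identities, the reductions are as follows. The identity of the form $\tilde{\mu}([f_1,f_2,f_3]_c,f_4)=\sum \mu(\cdot,\cdot)\tilde{\mu}(\cdot,\cdot)$ is the most instructive: applied to $h$, its left side equals $\rho(f_1+f_2+f_3+f_4,h)\{h,[f_1,f_2,f_3]_c,f_4\}$, which by $\rho$-skew-symmetry in the first two slots becomes $-\rho(f_4,h)\{[f_1,f_2,f_3]_c,h,f_4\}$. Applying axiom \eqref{a3} to $\{[f_1,f_2,f_3]_c,h,f_4\}$ and then using $\rho$-skew-symmetry in each of the three resulting inner triple products converts the expression term by term into the right side. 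The identity $\tilde{\mu}(f_1,\{f_2,f_3,f_4\})=\cdots$ is handled by the same template, except the starting point is the axiom \eqref{a2} applied after swapping $h$ into the first slot; here the bookkeeping yields additional cancellations between paired $\tilde{\mu}\tilde{\mu}$ terms that differ only in the order of two entries inside $\{.,.,.\}$, and these cancellations express precisely the $\rho$-skew-symmetry of the inner bracket.

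The remaining two identities, which involve a product $\mu(f_1,f_2)\tilde{\mu}(f_3,f_4)$ (or $\tilde{\mu}(f_3,f_4)\mu(f_1,f_2)$) rearranged up to a $[.,.,.]_c$-bracket, follow by the same mechanism: evaluate on $h$, pull the $\rho$-factor coming from the definition of $R$ outside, then compare the resulting nested $\{.,.,.\}$ expression with axiom \eqref{a2} applied so that $h$ plays the role of $g_3$ (or $g_1$, as appropriate). The $\rho$-skew-symmetry of $\{.,.,.\}$ in its first two slots is used to rewrite the terms produced by $R$ into the canonical form used in \eqref{a2}.

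The main obstacle is purely bookkeeping of $\rho$-cocycle factors: each $R(f,g)h$ contributes a factor $\rho(f+g,h)$, and after compositions and applications of $\rho$-skew-symmetry one accumulates products like $\rho(a,h)\rho(h,a)$, $\rho(a+b,c)\rho^{-1}$, etc. These must be repeatedly simplified using $\rho(a,b)\rho(b,a)=1$ and the bimultiplicativity of $\rho$ to match the exact $\rho$-factors appearing on the right-hand sides of the identities in Definition~\ref{a7}. No conceptually new input is required beyond Definition~\ref{a6}, Lemma~\ref{111}, and these standard manipulations, so I would present only one of the four verifications in full and indicate that the remaining three are entirely analogous.
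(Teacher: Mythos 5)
Your proposal is correct and follows essentially the same route as the paper: the paper likewise evaluates on a test element, verifies the identity $\tilde{\mu}([f_1,f_2,f_3]_c,f_4)=\mu(f_1,f_2)\tilde{\mu}(f_3,f_4)+\cdots$ by pulling out the $\rho$-factor from $R$, using $\rho$-skew-symmetry in the first two slots, and invoking axiom \eqref{a3}, and then declares the remaining identities analogous. Your explicit appeal to Lemma~\ref{111} for the condition that $\mu=L$ is a representation of $\mathcal{B}^c_{\rho}$ is a small point of extra care that the paper leaves implicit.
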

\begin{proof}
	For sample, let us examine one of the properties of the representation. For this, we have
	\begin{align*}
	&L(f_1, f_2)R(f_3, f_4)f_5+\rho(f_1, f_2+f_3)L(f_2, f_3)R(f_1, f_4)f_5+\rho(f_1+f_2, f_3)L(f_3, f_1)R(f_2, f_4)f_5\\
	&=\rho(f_3+f_4,f_5)\{f_1, f_2, \{f_5, f_3, f_4\}\}+\rho(f_1, f_2+f_3)\rho(f_1+f_4, f_5)\{f_2, f_3, \{f_5, f_1, f_4\}\}\\
	&\ \ \ +\rho(f_1+f_2, f_3)\rho(f_2+f_4,f_5)\{f_3, f_1, \{f_5, f_2, f_4\}\}\\
	&=-\rho(f_4, f_5)\{[f_1, f_2, f_3]_c, f_5, f_4\}=\rho(f_1+f_2+f_3+f_4, f_5)\{f_5, [f_1, f_2, f_3]_c, f_4\}\\
	&=
	R([f_1, f_2, f_3]_c, f_4)f_5.
	\end{align*}
\end{proof}
Consider $(\mathcal{B}, \{.,.,.\},\rho)$ and $(V,\mu)$ as a 3-pre-$\rho$-Lie algebra and a representation of $\mathcal{B}^c_{\rho}$, respectively. Then $(V,\mu, 0)$ is a representation of $\mathcal{B}_{\{\}}$.
\begin{proposition}\label{a8}
Let $(V,\mu, \tilde{\mu} )$ be a representation of $\mathcal{B}_{\{\}}$. Then $(\mathcal{B}\oplus V, \{.,.,.\}_{\mu, \tilde{\mu}}, \rho)$ is a 3-pre-$\rho$-Lie algebra, where the operation $\{.,.,.\}_{\mu, \tilde{\mu}}:\otimes^3(\mathcal{B}\oplus V)\longrightarrow \mathcal{B}\oplus V$ is defined by
	\begin{align}\label{a10}
	\{f_1+v_1, f_2+v_2, f_3+v_3\}_{\mu, \tilde{\mu}}=\{f_1, f_2, f_3\}+\mu(f_1, f_2)v_3+\rho(f_1, f_2+f_3)\tilde{\mu}(f_2, f_3)v_1-\rho(f_2, f_3)\tilde{\mu}(f_1, f_3)v_2,
	\end{align}
	and $f_1, f_2, f_3\in Hg(\mathcal{B})$ and $v_1, v_2, v_3\in Hg(V)$, 
\end{proposition}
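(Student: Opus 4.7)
The plan is to verify the four axioms of Definition \ref{a6} for the bracket $\{.,.,.\}_{\mu,\tilde\mu}$ on $\mathcal{B}\oplus V$ by substituting $f_i+v_i$ everywhere and separating each resulting identity into its $\mathcal{B}$-component and its $V$-component. The $\mathcal{B}$-component is always automatic, since on pure elements of $\mathcal{B}$ the new bracket reduces to $\{.,.,.\}$ and $\mathcal{B}_{\{\}}$ itself is a 3-pre-$\rho$-Lie algebra. The $V$-component of each identity is linear in exactly one of the $v_i$'s, and the strategy is to show that the representation axioms listed in Definition \ref{a7} are precisely the coefficients one obtains.

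First I would handle the trivial grading and skew-symmetry conditions. Since $\mu$ and $\tilde\mu$ are even bilinear maps into $gl(V)$, the formula for $\{f_1+v_1,f_2+v_2,f_3+v_3\}_{\mu,\tilde\mu}$ is manifestly homogeneous of degree $|f_1|+|f_2|+|f_3|$. For the $\rho$-skew-symmetry in the first two arguments, I would compare the two expansions term by term: the $\{f_1,f_2,f_3\}$ piece is skew by hypothesis, the $\mu(f_1,f_2)v_3$ term is skew because $\mu:\wedge^2\mathcal{B}^c_\rho\to gl(V)$ is $\rho$-skew, and the two $\tilde\mu$-terms exchange roles under the swap $1\leftrightarrow 2$ exactly with the sign and $\rho$-factor predicted by the definition.

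Next I would treat the two fundamental identities \eqref{a2} and \eqref{a3}. For \eqref{a2}, substitute $f_i+v_i$ ($i=1,2$) and $g_j+w_j$ ($j=1,2,3$), expand all brackets using the defining formula together with $[f_1+v_1,f_2+v_2,g_j+w_j]_c$ (the commutator form from Proposition \ref{a9}), and collect terms by which variable $v_1,v_2,w_1,w_2,w_3$ appears linearly. The coefficient of $w_3$ in both sides will reduce to the 3-$\rho$-Lie algebra representation axiom \eqref{f} for $\mu$ from Definition \ref{a5}; the coefficients of $w_1$ and $w_2$ will each reduce to one of the four mixed $\mu$--$\tilde\mu$ identities of Definition \ref{a7}; and the coefficients of $v_1$ and $v_2$ will reduce to the identity that begins $\tilde\mu(f_1,\{f_2,f_3,f_4\})=\cdots$. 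For \eqref{a3}, the same procedure produces the remaining axioms (notably the two that have $\tilde\mu([f_1,f_2,f_3]_c,f_4)$ and $\tilde\mu(f_3,f_4)\mu(f_1,f_2)$ on the left).

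The main obstacle is purely bookkeeping: each of the five fundamental terms in \eqref{a2} and \eqref{a3} expands into up to seven summands (one in $\mathcal{B}$, one for each cyclic permutation inside $[.,.,.]_c$, and one for each vector slot), and every $v$ or $w$ pulled past an $f$ or $g$ carries a $\rho$-cocycle factor. I would organize the verification by a table indexed by the five vectors $v_1,v_2,w_1,w_2,w_3$, writing out all contributions to each column and checking that, after simplification via the two-cycle identities, each column is exactly one of the seven axioms of Definition \ref{a7} (or a $\rho$-rescaling of one). Once the table closes, the proposition is established.
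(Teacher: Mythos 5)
Your proposal is correct and follows essentially the same route as the paper, whose own proof is simply the statement that the result follows from Definitions \ref{a6} and \ref{a7} by direct computation; your decomposition into $\mathcal{B}$- and $V$-components, with the $V$-component of each fundamental identity collected by the variable in which it is linear and matched against the representation axioms, is exactly the bookkeeping that computation requires. You have in fact supplied more organizational detail than the paper does.
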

\begin{proof}
	Using Definitions \ref{a6} and \ref{a7} and a direct computation, we get the result.
\end{proof}
Let $V$ be a graded vector space. Define the operator $\zeta:\otimes^2V\longrightarrow\otimes^2V$ by $ \zeta(f_1\otimes f_2)=f_2\otimes f_1$, where $f_1\otimes f_2\in\otimes^2V$.
\begin{proposition}\label{a14}
	Let $(V, \mu, \tilde{\mu})$ be a representation of $\mathcal{B}_{\{\}}$. If we define the bilinear map $\nu:\wedge^2\mathcal{B}\longrightarrow gl(V)$ by $\nu(f_1, f_2)=(\mu-\rho(f_1, f_2)\tilde{\mu}\zeta+\tilde{\mu})(f_1, f_2)$ for all $f_1, f_2\in Hg(\mathcal{B})$, then $(V,\nu)$ is a representation of $\mathcal{B}^c_{\rho}$.
\end{proposition}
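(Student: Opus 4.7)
The strategy is to avoid verifying the (long) three identities of Definition \ref{a5} for $\nu$ directly, and instead to build the representation structure on $V$ from two operations we already have in hand: the semi-direct 3-pre-$\rho$-Lie product on $\mathcal{B}\oplus V$ from Proposition \ref{a8}, and the sub-adjacent operation from Proposition \ref{a9}. The conclusion will then follow from the ``only if'' direction of Proposition \ref{zahra}.

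First, I would check the mild point that $\nu$ factors through $\wedge^{2}\mathcal{B}$, i.e.\ that $\nu(f_{1},f_{2})=-\rho(f_{1},f_{2})\nu(f_{2},f_{1})$. The $\mu$-term is $\rho$-skew because $\mu$ is defined on $\wedge^{2}\mathcal{B}^{c}_{\rho}$, and the two $\tilde\mu$-terms interchange under the swap $f_{1}\leftrightarrow f_{2}$ thanks to the factor $\rho(f_{1},f_{2})\rho(f_{2},f_{1})=1$, making $-\rho(f_{1},f_{2})\tilde\mu(f_{2},f_{1})+\tilde\mu(f_{1},f_{2})$ $\rho$-skew by inspection.

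Next, by Proposition \ref{a8}, $(\mathcal{B}\oplus V,\{\cdot,\cdot,\cdot\}_{\mu,\tilde\mu},\rho)$ is a 3-pre-$\rho$-Lie algebra, so by Proposition \ref{a9} its sub-adjacent bracket
\[
[x_{1},x_{2},x_{3}]_{c}=\{x_{1},x_{2},x_{3}\}_{\mu,\tilde\mu}+\rho(x_{1},x_{2}+x_{3})\{x_{2},x_{3},x_{1}\}_{\mu,\tilde\mu}+\rho(x_{1}+x_{2},x_{3})\{x_{3},x_{1},x_{2}\}_{\mu,\tilde\mu}
\]
defines a 3-$\rho$-Lie algebra structure on $\mathcal{B}\oplus V$. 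The key computation is to unfold this bracket on triples $f_{i}+v_{i}$ using \eqref{a10} and collect the coefficients of $v_{1},v_{2},v_{3}$. The $\mathcal{B}$-part is manifestly $[f_{1},f_{2},f_{3}]_{c}$. For the $v_{3}$-coefficient one gets $\mu(f_{1},f_{2})-\rho(f_{1},f_{2}+f_{3})\rho(f_{3},f_{1})\tilde\mu(f_{2},f_{1})+\rho(f_{1}+f_{2},f_{3})\rho(f_{3},f_{1}+f_{2})\tilde\mu(f_{1},f_{2})$, which simplifies via the two-cycle identities to $\mu(f_{1},f_{2})-\rho(f_{1},f_{2})\tilde\mu(f_{2},f_{1})+\tilde\mu(f_{1},f_{2})=\nu(f_{1},f_{2})$. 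Analogous simplifications (using $\rho(f_{1}+f_{2},f_{3})\rho(f_{1},f_{2})=\rho(f_{1},f_{2}+f_{3})\rho(f_{2},f_{3})$ and the obvious cyclic variants) turn the $v_{1}$-coefficient into $\rho(f_{1},f_{2}+f_{3})\nu(f_{2},f_{3})$ and the $v_{2}$-coefficient into $\rho(f_{1}+f_{2},f_{3})\nu(f_{3},f_{1})$. Thus
\[
[f_{1}+v_{1},f_{2}+v_{2},f_{3}+v_{3}]_{c}=[f_{1},f_{2},f_{3}]_{c}+\nu(f_{1},f_{2})v_{3}+\rho(f_{1},f_{2}+f_{3})\nu(f_{2},f_{3})v_{1}+\rho(f_{1}+f_{2},f_{3})\nu(f_{3},f_{1})v_{2},
\]
which is precisely the bracket in Proposition \ref{zahra} associated with the pair $(V,\nu)$ on the 3-$\rho$-Lie algebra $\mathcal{B}^{c}_{\rho}$.

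Since the left-hand side is a 3-$\rho$-Lie bracket, the ``only if'' direction of Proposition \ref{zahra} (applied to $\mathcal{B}^{c}_{\rho}$ in place of $\mathcal{B}_{\rho}$) immediately gives that $(V,\nu)$ is a representation of $\mathcal{B}^{c}_{\rho}$. The main obstacle in this plan is the bookkeeping of the $\rho$-factors when simplifying the three coefficients above; every simplification step is just an application of the two-cycle axioms $\rho(a+b,c)=\rho(a,c)\rho(b,c)$ and $\rho(a,b)\rho(b,a)=1$, but they must be tracked carefully, especially when reducing the $v_{1}$ and $v_{2}$ coefficients so that the common prefactors $\rho(f_{1},f_{2}+f_{3})$ and $\rho(f_{1}+f_{2},f_{3})$ can be cleanly extracted.
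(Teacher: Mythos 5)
Your proposal is correct and follows essentially the same route as the paper: both pass through Proposition \ref{a8} to get the 3-pre-$\rho$-Lie structure on $\mathcal{B}\oplus V$, take its sub-adjacent bracket via Proposition \ref{a9}, identify the coefficients of $v_1,v_2,v_3$ with $\nu$ as in \eqref{a12}, and conclude by Proposition \ref{zahra}. Your version merely spells out the $\rho$-factor bookkeeping in more detail than the paper does.
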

\begin{proof}
	Proposition \ref{a8} implies that we have the 3-pre-$\rho$-Lie algebra $(\mathcal{B}\oplus V, \{.,.,.\}_{\mu, \tilde{\mu}}, \rho)$. Also, Proposition \ref{a9} implies that $(\mathcal{B}^c\oplus V,[.,.,.]_c,\rho)$ is the sub-adjacent 3-$\rho$-Lie algebra, where
	\begin{align*}
	[f_1+v_1, f_2+v_2, f_3+v_3]^{\mu,\tilde{\mu}}_c&=\{f_1+v_1, f_2+v_2, f_3+v_3\}_{\mu,\tilde{\mu}}+\rho(f_1, f_2+f_3)\{f_2+v_2, f_3+v_3, f_1+v_1\}_{\mu,\tilde{\mu}}\\
	&\ \ \ +\rho(f_1+f_2, f_3)\{f_3+v_1, f_1+v_1, f_2+v_2\}_{\mu,\tilde{\mu}}.
	\end{align*}
	By using \eqref{a10}, we get
	\begin{align}
	[f_1+v_1, f_2+v_2, f_3+v_3]^{\mu,\tilde{\mu}}_c&=[f_1, f_2, f_3]_c+(\mu-\rho(f_1,f_2)\tilde{\mu}\xi+\tilde{\mu})(f_1,f_2)v_3\label{a12}\\
	&\ \ \ +\rho(f_1, f_2+f_3)(\mu-\rho(f_2,f_3)\tilde{\mu}\xi+\tilde{\mu})(f_2,f_3)v_1\nonumber\\
	&\ \ \ +\rho(f_1+f_2,f_3)(\mu-\rho(f_3,f_1)\tilde{\mu}\xi+\tilde{\mu})(f_3,f_1)v_2\nonumber\\
	&=[f_1, f_2, f_3]_c+\nu(f_1,f_2)v_3+\rho(f_1, f_2+f_3)\nu(f_2,f_3)v_1\nonumber\\
	&\quad +\rho(f_1+f_2,f_3)\nu(f_3, f_1)v_2.\nonumber
	\end{align}
	Finally, by Proposition \ref{zahra}, we deduce that $(V,\nu)$ is a representation of $\mathcal{B}^c_{\rho}$.
\end{proof}
According to the above proposition, we can deduce that if $(\mathcal{B},\{.,.,.\},\rho)$ is a 3-pre-$\rho$-Lie algebra with the representation $(V,\mu, \tilde{\mu})$, then the 3-pre-$\rho$-Lie algebras $(\mathcal{B}\oplus V,\{.,.,.\}_{\mu, \tilde{\mu}},\rho)$ and  $(\mathcal{B}\oplus V, \{.,.,.\}_{\nu,0},\rho)$ have the same sub-adjacent 3-$\rho$-Lie algebra given by \eqref{a12}.

In the next proposition, we find the dual of the representation $(V,\mu, \tilde{\mu})$ of 3-pre-$\rho$-Lie algebra $(\mathcal{B},\{.,.,.\},\rho)$. Let us define $\nu^*(f_1, f_2)=\mu^*-\rho(f_1,f_2)\tilde{\mu}^*\zeta+\tilde{\mu}^*(f_1,f_2)$. 
\begin{proposition}
Consider $\mathcal{B}_{\{\}}$ equipped with a $\rho$-skew-symmetric representation $(V,\mu,\tilde{\mu})$. Then $(V^*,\nu^*, -\tilde{\mu}^*)$ is a representation of $\mathcal{B}_{\{\}}$, which is called the dual representation of the representation $(V,\mu, \tilde{\mu})$.
\end{proposition}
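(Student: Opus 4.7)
The plan is to verify the four identities of Definition \ref{a7} for the candidate pair $(\nu^*,-\tilde{\mu}^*)$ on $V^*$. The preliminary requirement—that $\nu^*$ is a representation of the sub-adjacent 3-$\rho$-Lie algebra $\mathcal{B}^c_\rho$ on $V^*$—is free: Proposition \ref{a14} turns $(V,\mu,\tilde{\mu})$ into a representation $\nu$ of $\mathcal{B}^c_\rho$ on $V$, and Proposition \ref{a13}, applied to $\nu$ (which inherits $\rho$-skew-symmetry from $(\mu,\tilde{\mu})$), then produces $\nu^*$ as a representation of $\mathcal{B}^c_\rho$ on $V^*$.

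For each of the four structural identities, my approach is to evaluate both sides on a homogeneous $\varrho\in V^*$ and pair against a homogeneous $v\in V$. Using the defining formulas
\[
\mu^*(f,g)(\varrho)(v) = -\rho(f+g,\varrho)\,\varrho\bigl(\mu(f,g)v\bigr),\qquad \tilde{\mu}^*(f,g)(\varrho)(v) = -\rho(f+g,\varrho)\,\varrho\bigl(\tilde{\mu}(f,g)v\bigr),
\]
each dualized identity transports, after commuting $\varrho$ past the various arguments and collecting $\rho$-cocycle factors, into an identity on $V$ involving only $\mu$ and $\tilde{\mu}$. The target is to match this with the appropriate identity already satisfied by $(\mu,\tilde{\mu})$, possibly after interchanging the two sides; the extra minus sign sitting in front of $\tilde{\mu}^*$ is precisely what is needed to compensate for the minus sign produced by the duality and align the terms.

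A cleaner packaging that I will use to organize this bookkeeping is through Proposition \ref{a8}: showing that $(\nu^*,-\tilde{\mu}^*)$ is a representation amounts to verifying that the induced ternary bracket $\{\cdot,\cdot,\cdot\}_{\nu^*,-\tilde{\mu}^*}$ on $\mathcal{B}\oplus V^*$ defines a 3-pre-$\rho$-Lie algebra structure. This can then be tested by pairing against arbitrary elements of $\mathcal{B}\oplus V$ via a natural invariant bilinear form of the shape $\varphi(f+\alpha,g+\beta)=\alpha(g)+\rho(f,g)\beta(f)$, in the same spirit as the coadjoint construction of the previous section, thereby reducing the statement to the axioms for $(\mathcal{B},\{\cdot,\cdot,\cdot\},\rho)$ together with the representation identities of $(V,\mu,\tilde{\mu})$.

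The main obstacle is the bookkeeping of $\rho$-factors in the longest of the four identities—the first one, with seven terms and two nested applications of $\tilde{\mu}$. The $\rho$-skew-symmetry hypothesis on $(V,\mu,\tilde{\mu})$ is indispensable here because it is what guarantees that commuting $\varrho$ past the arguments of $\mu$ and $\tilde{\mu}$ produces exactly the $\rho$-cocycle factors demanded by the dualized axioms; without it, spurious signs would break the matching and the pair $(\nu^*,-\tilde{\mu}^*)$ would fail to close into a representation.
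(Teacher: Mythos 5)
Your proposal follows the same route as the paper, whose proof is exactly the citation of Propositions \ref{a13} and \ref{a14} (to obtain $\nu^*$ as a representation of the sub-adjacent $\mathcal{B}^c_\rho$ on $V^*$) followed by a direct verification of the four identities of Definition \ref{a7} by dualization. One small caution: your alternative packaging via Proposition \ref{a8} would require the converse of that proposition (that a 3-pre-$\rho$-Lie structure on $\mathcal{B}\oplus V^*$ forces the representation axioms), which the paper states only in one direction, so you should rely on the direct verification rather than on that reformulation.
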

\begin{proof}
	By Propositions \ref{a13}, \ref{a14} and Definition \ref{a7}, we get the result.
\end{proof}
If $\mathcal{B}_{\{\}}$ is a 3-pre-$\rho$-Lie algebra with the $\rho$-skew-symmetric representation $(V,\mu, \tilde{\mu})$, then the 3-pre-$\rho$-Lie algebras $(\mathcal{B}\oplus V^*,\{.,.,.\}_{\mu^*,0},\rho)$ and $(\mathcal{B}\oplus V^*,\{.,.,.\}_{\nu^*, -\tilde{\mu}^*},\rho)$ have the same sub-adjacent 3-$\rho$-Lie algebra $(\mathcal{B}^c\oplus V^*,[.,.,.]_c^{\mu^*},\rho)$.\\

\textbf{$\rho$-$\mathcal{O}$-operator.} In the following, we define the notion of  $\rho$-$\mathcal{O}$-operator associated to the representation $(V,\mu)$ of $\mathcal{B}_{\rho}$, then we construct a 3-$\rho$-pre-Lie algebra structure on the representation $V$ by a $\rho$-$\mathcal{O}$-operator $T$ and bilinear map $\mu$. In the next, we investigate the relationship between compatible 3-$\rho$-pre-Lie algebra structure and an invertible $\rho$-$\mathcal{O}$-operator of $\mathcal{B}_{\rho}$ (for the classical case refer to \cite{CLY}).
\begin{definition}
Let $\mathcal{B}$ be a 3-$\rho$-Lie algebra with the representation $(V,\mu)$.
An even linear operator $T : V \longrightarrow \mathcal{B}$ is called a $\rho$-$\mathcal{O}$-operator associated to $(V,\mu)$ if $T$ satisfies
\begin{align*}
[Tx, Ty, Tz]_\mathcal{B} = T (\mu(Tx, Ty)z +\rho(x, y+z)(\mu(Ty, Tz)x + \rho(x+y, z)\mu(Tz, Tx)y),\quad \forall x,y,z \in V. 
\end{align*}
\end{definition}
\begin{lemma}
Consider the following three assumptions
\begin{enumerate}
\item[i.]
 $\mathcal{B}_{\rho}$ is a 3-$\rho$-Lie algebra,
 \item[ii.]
 $(V, \mu)$ is a  $\rho$-skew-symmetric representation of $\mathcal{B}_{\rho}$,
 \item[iii.]
 $T$ is a $\rho$-$\mathcal{O}$-operator associated to $(V, \mu)$.
\end{enumerate} If we define a new multiplication on $V$ by
	\begin{align}\label{457}
	\{x,y,z\}_V = \mu(Tx, Ty)z,\quad \forall x,y,z\in V,
	\end{align}
 then $(V, \{.,.,.\}_V)$ is a 3-pre-$\rho$-Lie algebra.
\end{lemma}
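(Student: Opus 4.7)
The plan is to verify the three defining axioms of a 3-pre-$\rho$-Lie algebra from Definition~\ref{a6} for the operation $\{x,y,z\}_V = \mu(Tx,Ty)z$, using only (a) the 3-$\rho$-Lie bracket identities on $\mathcal{B}_\rho$, (b) the representation axioms \eqref{f}--\eqref{h}, and (c) the $\rho$-$\mathcal{O}$-operator defining equation. The grading axiom $|\{x,y,z\}_V|=|x|+|y|+|z|$ is immediate from the fact that $T$ is even and $\mu$ preserves the grading. The $\rho$-skew-symmetry in the first two slots, $\{x,y,z\}_V=-\rho(x,y)\{y,x,z\}_V$, follows at once because $\mu$ is defined on the $\rho$-wedge $\wedge^2\mathcal{B}_\rho$, so $\mu(Tx,Ty)=-\rho(x,y)\mu(Ty,Tx)$.

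The core of the argument is to prove the two fundamental-type identities \eqref{a2} and \eqref{a3}. The key preliminary observation is that the $\rho$-$\mathcal{O}$-operator condition can be rewritten as
\begin{equation*}
T[x,y,z]_c = [Tx,Ty,Tz]_\mathcal{B},
\end{equation*}
where $[x,y,z]_c = \{x,y,z\}_V + \rho(x,y+z)\{y,z,x\}_V + \rho(x+y,z)\{z,x,y\}_V$ is the sub-adjacent bracket from Proposition~\ref{a9}. Thus applying $T$ to the arguments of $\{.,.,.\}_V$ turns the sub-adjacent bracket into the 3-$\rho$-Lie bracket in $\mathcal{B}$. This converts both identities \eqref{a2} and \eqref{a3} into statements purely about $\mu$, $T$, and the bracket of $\mathcal{B}$.

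For \eqref{a2}, I would expand the left-hand side as
\begin{equation*}
\{x_1,x_2,\{y_1,y_2,y_3\}_V\}_V = \mu(Tx_1,Tx_2)\mu(Ty_1,Ty_2)y_3,
\end{equation*}
and the three terms on the right-hand side as $\mu(T[x_1,x_2,y_1]_c,Ty_2)y_3 = \mu([Tx_1,Tx_2,Ty_1]_\mathcal{B},Ty_2)y_3$, etc. After applying \eqref{g} to expand the first of these, the remaining terms collect into precisely the commutator $\mu(Tx_1,Tx_2)\mu(Ty_1,Ty_2)-\rho(x_1+x_2,y_1+y_2)\mu(Ty_1,Ty_2)\mu(Tx_1,Tx_2)$ acting on $y_3$, which by \eqref{f} equals $\mu([Tx_1,Tx_2,Ty_1]_\mathcal{B},Ty_2)y_3 + \rho(x_1+x_2,y_1)\mu(Ty_1,[Tx_1,Tx_2,Ty_2]_\mathcal{B})y_3$; rewriting those brackets back via the $\rho$-$\mathcal{O}$ condition closes the identity. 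Identity \eqref{a3} is verified symmetrically, now expanding the left-hand side with the $\rho$-$\mathcal{O}$ relation (so that $T[x_1,x_2,x_3]_c$ becomes $[Tx_1,Tx_2,Tx_3]_\mathcal{B}$) and applying \eqref{g} in the first slot of $\mu$ to match the three-term right-hand side.

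The main obstacle is purely bookkeeping: the $\rho$-factors in \eqref{f}--\eqref{h} and in the definition of $[.,.,.]_c$ must combine with the factors that appear when one commutes $T$ and $\mu$ past various homogeneous elements, and one must use the two-cycle properties $\rho(a+b,c)=\rho(a,c)\rho(b,c)$ and $\rho(a,b)\rho(b,a)=1$ repeatedly to see that everything matches. There is no new conceptual input beyond the three ingredients above; once the $\rho$-$\mathcal{O}$ equation is recognized as the statement $T[.,.,.]_c=[T\cdot,T\cdot,T\cdot]_\mathcal{B}$, both identities reduce to the representation axioms of $(V,\mu)$ applied at the points $Tx_i, Ty_j$.
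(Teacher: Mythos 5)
Your proposal is correct and follows essentially the same route as the paper: both rest on the observation that the $\rho$-$\mathcal{O}$-operator condition reads $T[x,y,z]_c=[Tx,Ty,Tz]_\mathcal{B}$, then rewrite each term of the 3-pre-$\rho$-Lie identities as $\mu$ evaluated at the points $Tx_i$ and close via the representation axioms of Definition~\ref{a5}. The only cosmetic difference is that the paper simply lists the four rewritten terms and cites Definition~\ref{a5}, whereas you spell out which axioms (\eqref{f} for \eqref{a2}, \eqref{g} for \eqref{a3}) finish each identity.
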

\begin{proof}
	Suppose that $x,y,z\in Hg(V)$. It is easy to see that 
	\begin{align*}
	\{x,y,z\}=-\rho(x,y)\{y,x,z\},\qquad T[x,y,z]_c=[Tx,Ty,Tz].
	\end{align*}
	Also, for $x_1, x_2, x_3, x_4, x_5\in Hg(V)$, we have
	\begin{align}
	\{x_1, x_2, \{x_3, x_4, x_5\}\}&=\mu(Tx_1, Tx_2)\mu(Tx_3, Tx_4)x_5,\label{z1}\\
	\{[x_1, x_2, x_3]_c, x_4, x_5\}&=\mu([Tx_1, Tx_2,Tx_3], Tx_4)x_5,\label{z2}\\
	\rho(x_1+x_2, x_3)\{x_3, [x_1, x_2, x_4]_c, x_5\}&=\rho(x_1+x_2, x_3)\mu(Tx_3, [Tx_1, Tx_2, Tx_4])x_5,\label{z3}\\
	\rho(x_1+x_2, x_3+x_4)\{x_3, x_4,\{x_1,x_2,x_5\}\}&=\rho(x_1+x_2, x_3+x_4)\mu(Tx_3, Tx_4)\mu(Tx_1, Tx_2)x_5.\label{z4}
	\end{align}
	By Definition \ref{a5} and (\ref{z1})--(\ref{z4}), we get the result.
\end{proof}
\begin{corollary}
By the above proposition, we deduce that $(V, [\cdot,\cdot,\cdot]_C)$ is a 3-$\rho$-Lie algebra as the sub-adjacent of the 3-$\rho$-pre-Lie algebra $(V,\{\cdot,\cdot,\cdot\})$. Furthermore, $T(V) = \{T v|v \in V \}\subset A $ is a subalgebra of $\mathcal{B}_{\rho}$ and there is an induced 3-$\rho$-pre-Lie algebra structure on $T (V )$ given by
\begin{equation}\label{456}
\{Tu, T v, Tw\}_{T(V )} := T \{u, v, w\},\quad \forall u, v, w \in V. 
\end{equation}
\end{corollary}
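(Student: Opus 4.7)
My plan is to peel off the three claims of the corollary and handle each by invoking results already established. For the first claim, I will simply observe that the preceding lemma endows $V$ with the structure of a 3-pre-$\rho$-Lie algebra $(V,\{.,.,.\}_V)$, so applying Proposition~\ref{a9} directly yields the subadjacent 3-$\rho$-Lie algebra $(V,[.,.,.]_C)$ with bracket
$[u,v,w]_C=\{u,v,w\}_V+\rho(u,v+w)\{v,w,u\}_V+\rho(u+v,w)\{w,u,v\}_V$; nothing further has to be checked.

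For the claim that $T(V)$ is a subalgebra of $\mathcal{B}_\rho$ I will quote the defining identity of a $\rho$-$\mathcal{O}$-operator: since $[Tx,Ty,Tz]_{\mathcal{B}}$ equals $T$ applied to a combination of $\mu$-images, it lies in $T(V)$ by inspection, and therefore $T(V)$ is closed under $[\cdot,\cdot,\cdot]_{\mathcal{B}}$.

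The substantive content sits in the third claim, and the first thing I will check is that the prescription $\{Tu,Tv,Tw\}_{T(V)}:=T\{u,v,w\}_V$ is independent of the chosen preimages, i.e.\ that $T\{u_1,u_2,u_3\}_V$ depends only on $(Tu_1,Tu_2,Tu_3)$. By linearity and the explicit formula $\{u_1,u_2,u_3\}_V=\mu(Tu_1,Tu_2)u_3$, this reduces to showing that $T(\mu(Tu,Tv)a)=0$ whenever $Ta=0$; the first two slots already vanish at the level of $\mu$ by bilinearity. Substituting $z=a$ with $Ta=0$ into the $\rho$-$\mathcal{O}$-operator identity collapses the left hand side together with the last two terms on the right to zero, leaving exactly $T(\mu(Tu,Tv)a)=0$. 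I expect this well-definedness step to be the main obstacle; once it is settled, everything else is routine.

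Once well-definedness is in place, the 3-pre-$\rho$-Lie algebra axioms on $T(V)$ will be inherited from those on $V$ by applying $T$ termwise, using that $T$ is degree-preserving (so $|Tu|=|u|$ and all two-cycle factors transport without change) and that $T[u,v,w]_C=[Tu,Tv,Tw]_{\mathcal{B}}$ by the $\mathcal{O}$-operator identity together with the definition of $[\cdot,\cdot,\cdot]_C$. Thus $\rho$-skew-symmetry, the identity \eqref{a2} and the identity \eqref{a3} all descend from $(V,\{.,.,.\}_V)$ to $(T(V),\{.,.,.\}_{T(V)})$ by pushing $T$ through the known relations, with no new computational input required.
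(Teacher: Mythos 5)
Your proposal is correct, and it in fact supplies more than the paper does: the corollary is stated without any proof beyond the phrase ``by the above proposition,'' so the intended argument is exactly the one you outline (Proposition~\ref{a9} for the sub-adjacent bracket, the $\rho$-$\mathcal{O}$-operator identity for closedness of $T(V)$, and transport of the axioms along $T$). The genuinely valuable addition is your well-definedness check for \eqref{456}, which the paper silently skips and which is necessary whenever $T$ is not injective: since $\{u,v,w\}_V=\mu(Tu,Tv)w$ depends on the first two slots only through $Tu,Tv$, the issue is concentrated in the third slot, and your substitution of $z=a$ with $Ta=0$ into the $\rho$-$\mathcal{O}$-operator identity (killing the left-hand side and the two terms containing $\mu(Ty,Ta)$ and $\mu(Ta,Tx)$ by bilinearity of $\mu$) correctly yields $T(\mu(Tu,Tv)a)=0$. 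The remaining steps --- $T[u,v,w]_c=[Tu,Tv,Tw]_{\mathcal{B}}$, the evenness of $T$ so that all $\rho$-factors are unchanged, and the termwise push of identities \eqref{a2}--\eqref{a3} through $T$ --- are all sound and match what the paper's lemma already records.
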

\begin{lemma}\label{459}
Consider 3-$\rho$-Lie algebra $\mathcal{B}_{\rho}$. Then the following assertions are equivalent
\begin{itemize}
\item[i.]
existence of a compatible 3-pre-$\rho$-Lie algebra structure,
\item[ii.]
existence of an invertible $\rho$-$\mathcal{O}$-operator.
\end{itemize}
\end{lemma}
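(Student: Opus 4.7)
The plan is to prove both implications separately, leveraging the previous lemma that constructs a 3-pre-$\rho$-Lie algebra from a $\rho$-$\mathcal{O}$-operator and the corollary that transports this structure onto the image $T(V)\subseteq \mathcal{B}$. For (i)$\Rightarrow$(ii) the identity map on $\mathcal{B}$ will serve as the invertible operator, with the left multiplication of the compatible pre-structure providing the representation; for (ii)$\Rightarrow$(i) the pre-structure constructed on $T(V)=\mathcal{B}$ via the invertible $T$ is the required compatible one.

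For the direction (i)$\Rightarrow$(ii), suppose $(\mathcal{B},\{\cdot,\cdot,\cdot\},\rho)$ is a compatible 3-pre-$\rho$-Lie structure on $\mathcal{B}_\rho$, so that $[\cdot,\cdot,\cdot]_{\mathcal{B}}=[\cdot,\cdot,\cdot]_c$. By Lemma \ref{111} the left multiplication $L(f,g)h=\{f,g,h\}$ gives a representation $(\mathcal{B},L)$ of $\mathcal{B}_\rho^c=\mathcal{B}_\rho$. I would then verify that $T=\mathrm{id}_{\mathcal{B}}:\mathcal{B}\to\mathcal{B}$ is a $\rho$-$\mathcal{O}$-operator associated to $(\mathcal{B},L)$ by expanding
\begin{align*}
[Tx,Ty,Tz]_{\mathcal{B}}=[x,y,z]_c&=\{x,y,z\}+\rho(x,y+z)\{y,z,x\}+\rho(x+y,z)\{z,x,y\}\\
&=L(x,y)z+\rho(x,y+z)L(y,z)x+\rho(x+y,z)L(z,x)y,
\end{align*}
which is exactly the defining identity of a $\rho$-$\mathcal{O}$-operator (applied with $T=\mathrm{id}$). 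Invertibility is automatic, giving (ii).

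For the direction (ii)$\Rightarrow$(i), let $T:V\to\mathcal{B}$ be an invertible $\rho$-$\mathcal{O}$-operator associated to a representation $(V,\mu)$. The preceding lemma endows $V$ with a 3-pre-$\rho$-Lie structure $\{x,y,z\}_V=\mu(Tx,Ty)z$, and by the corollary this transports via $T$ to a 3-pre-$\rho$-Lie structure on $T(V)=\mathcal{B}$ defined by $\{Tu,Tv,Tw\}_{T(V)}=T\{u,v,w\}_V$. The remaining task is to check compatibility, i.e.\ that the sub-adjacent bracket $[\cdot,\cdot,\cdot]_c$ of this new structure coincides with the original bracket $[\cdot,\cdot,\cdot]_{\mathcal{B}}$. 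Writing $f=Tu$, $g=Tv$, $h=Tw$ (possible by invertibility) and using Definition \ref{a6},
\begin{align*}
[f,g,h]_c&=\{Tu,Tv,Tw\}_{T(V)}+\rho(u,v+w)\{Tv,Tw,Tu\}_{T(V)}+\rho(u+v,w)\{Tw,Tu,Tv\}_{T(V)}\\
&=T\bigl(\mu(Tu,Tv)w+\rho(u,v+w)\mu(Tv,Tw)u+\rho(u+v,w)\mu(Tw,Tu)v\bigr)\\
&=[Tu,Tv,Tw]_{\mathcal{B}}=[f,g,h]_{\mathcal{B}},
\end{align*}
where the penultimate equality uses precisely that $T$ is a $\rho$-$\mathcal{O}$-operator.

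The main obstacle is essentially bookkeeping: one must carry the two-cycle $\rho$ through every cyclic rearrangement without sign errors, in particular when expanding the sub-adjacent bracket in the second direction. Since all structural properties (pre-Lie identities on $V$, transportability via invertible $T$, the representation property of $L$) have already been established in preceding results, no new conceptual input is required beyond the careful identification $T=\mathrm{id}$ in one direction and $V\xrightarrow{\sim}\mathcal{B}$ in the other.
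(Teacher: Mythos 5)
Your proof is correct and follows essentially the same route as the paper: the identity map serves as an invertible $\rho$-$\mathcal{O}$-operator associated to the left-multiplication representation $(\mathcal{B},L)$ for one direction, and the structure transported to $T(V)=\mathcal{B}$ via the invertible operator gives the compatible 3-pre-$\rho$-Lie structure for the other. You merely spell out the compatibility check $[\cdot,\cdot,\cdot]_c=[\cdot,\cdot,\cdot]_{\mathcal{B}}$ that the paper dismisses with ``we can easily deduce,'' which is a welcome addition rather than a deviation.
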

\begin{proof}
 Let $T$ be an invertible $\rho$-$\mathcal{O}$-operator of $\mathcal{B}_{\rho}$ associated to $(V,\mu)$. BY \eqref{457}, \eqref{456} and this fact that $T$ is a $\rho$-$\mathcal{O}$-operator, we can easily deduce that  $(A=T(V), \{\cdot,\cdot,\cdot\}_{T(V)})$ is a compatible 3-$\rho$-pre-Lie algebra. Conversely, the identity map id is an $\rho$-$\mathcal{O}$-operator of the sub-adjacent 3-$\rho$-Lie algebra of a 3-$\rho$-pre-Lie algebra associated to the representation $(\mathcal{B},L)$.
\end{proof}
\subsection{Phase spaces of 3-$\rho$-Lie algebras}
In this section, the notion of a phase space of a 3-$\rho$-Lie algebra will be introduced and we show that a 3-$\rho$-Lie algebra has a phase space if and only if it is sub-adjacent to a 3-$\rho$-pre-Lie algebra. 
\begin{proposition}\label{ass}
Consider the symplectic 3-$\rho$-Lie algebra $(\mathcal{B}, [.,.,.]_\mathcal{B},\rho,\omega)$. Then the following structure defines a compatible 3-pre-$\rho$-Lie algebra structure on $\mathcal{B}$
\begin{equation}\label{458}
\omega(\{f,g,h\},s) = -\rho(f+g,h)\omega(h, [f,g,s]_\mathcal{B}),~~\forall f,g,h,s\in \mathcal{B}.
\end{equation}
\end{proposition}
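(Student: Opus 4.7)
The plan is a four-step program based on the non-degeneracy of $\omega$. First, for any homogeneous $f,g,h\in Hg(\mathcal{B}_\rho)$, the assignment $s\mapsto -\rho(f+g,h)\omega(h,[f,g,s]_\mathcal{B})$ is a linear functional on $\mathcal{B}$ of homogeneous degree $-(|f|+|g|+|h|)$, so non-degeneracy of $\omega$ produces a unique element, denoted $\{f,g,h\}$, representing it via $\omega$. This defines the triple operation, gives the grading axiom $|\{f,g,h\}|=|f|+|g|+|h|$, and immediately ensures trilinearity.

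Next I would verify the $\rho$-skew-symmetry in the first two arguments: using the defining formula \eqref{458} and the relation $[g,f,s]_\mathcal{B}=-\rho(g,f)[f,g,s]_\mathcal{B}$ from the 3-$\rho$-Lie axioms, one computes $\omega(\{g,f,h\},s)=\rho(g,f)\omega(\{f,g,h\},s)$, and non-degeneracy yields $\{f,g,h\}=-\rho(f,g)\{g,f,h\}$.

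Then I would prove compatibility, i.e. $[f,g,h]_c=[f,g,h]_\mathcal{B}$, which is the key structural step because it reduces the axioms \eqref{a2} and \eqref{a3} to identities involving only $[.,.,.]_\mathcal{B}$. Expanding $[f,g,h]_c=\{f,g,h\}+\rho(f,g+h)\{g,h,f\}+\rho(f+g,h)\{h,f,g\}$, pairing with an arbitrary $s$, applying \eqref{458} to each cyclic term, and simplifying the $\rho$-coefficients with the cocycle property and the $\rho$-skew-symmetry of $\omega$, one sees that the difference $\omega([f,g,h]_c-[f,g,h]_\mathcal{B},s)$ reduces precisely to the left-hand side of the symplectic condition of Definition \ref{a15} (applied to $f,g,h,s$), and therefore vanishes; non-degeneracy of $\omega$ then gives compatibility.

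Finally, with $[.,.,.]_c$ replaced by $[.,.,.]_\mathcal{B}$, I would verify the two fundamental identities \eqref{a2} and \eqref{a3} by pairing each side against an arbitrary test element via $\omega$, substituting \eqref{458} into every triple bracket, and rewriting $\omega$-pairings so that the bracket $[.,.,.]_\mathcal{B}$ is always in the first slot. After this normalization, the identities collapse onto the $\rho$-fundamental identity satisfied by $(\mathcal{B},[.,.,.]_\mathcal{B},\rho)$ applied to the appropriate arguments, yielding zero. The principal obstacle is purely combinatorial: tracking the numerous $\rho$-factors produced by each application of \eqref{458} and each swap of arguments of $\omega$ or of $[.,.,.]_\mathcal{B}$. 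A disciplined bookkeeping strategy, rewriting every term using only $\rho(a,b)\rho(b,a)=1$ and $\rho(a+b,c)=\rho(a,c)\rho(b,c)$ to a single normal form before collecting, should make the cancellation transparent and conclude the proof.
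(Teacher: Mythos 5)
Your proposal is correct in outline but takes a genuinely different route from the paper's. The paper does not verify the 3-pre-$\rho$-Lie axioms directly: it defines the invertible map $T:\mathcal{B}^*\to\mathcal{B}$ by $\omega(T\alpha,g)=\alpha(g)$, observes that the symplectic identity is exactly the statement that $T$ is an invertible $\rho$-$\mathcal{O}$-operator for the coadjoint representation $(\mathcal{B}^*,ad^*)$, and then invokes the machinery already built in Section 4 (the construction \eqref{457}, the induced structure \eqref{456} on $T(V)$, and Lemma \ref{459}) to conclude that $\{f,g,h\}=T\bigl(ad^*(f,g)T^{-1}(h)\bigr)$ is a compatible 3-pre-$\rho$-Lie structure, with \eqref{458} obtained by unwinding this formula. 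You instead define $\{\cdot,\cdot,\cdot\}$ via non-degeneracy and check every axiom by hand. The paper's route buys brevity: your entire step 4 is absorbed into the already-proved $\mathcal{O}$-operator lemma, which reduces \eqref{a2}--\eqref{a3} to the representation identities for $ad^*$. Your route buys self-containedness and makes explicit (step 3) that the compatibility $[\cdot,\cdot,\cdot]_c=[\cdot,\cdot,\cdot]_\mathcal{B}$ is precisely equivalent to the symplectic cocycle condition of Definition \ref{a15}, a point the paper leaves implicit; that computation does check out. Two cautions: in step 2 the intermediate identity should read $\omega(\{g,f,h\},s)=-\rho(g,f)\,\omega(\{f,g,h\},s)$ (your version drops the sign, though the conclusion you draw is the correct one); and in step 4 the axiom \eqref{a3} does not collapse onto a single instance of the $\rho$-fundamental identity --- it requires the derived identities asserting that $ad$ is a representation (conditions \eqref{g}--\eqref{h} of Definition \ref{a5}), which follow from the fundamental identity and $\rho$-skew-symmetry only after further manipulation, so the bookkeeping there is heavier than your sketch suggests.
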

\begin{proof}
Let us define a linear map $T:\mathcal{B}^*\longrightarrow \mathcal{B}$ by  $\omega(f, g)=(T^{-1}(f))(g)$, or equivalently, $\omega(T\alpha, g) =\alpha(g)$ for all $f,g \in Hg(\mathcal{B})$ and $\alpha\in\mathcal{B}^*$. Since $\omega$ is a symplectic structure, we deduce that $T$ is an invertible $\rho$-$\mathcal{O}$-operator associated to $(\mathcal{B}^*, Ad^*)$. Also, by \eqref{457} and \eqref{456}, there there exists a compatible 3-$\rho$-pre-Lie algebra on $B$ given by $\{f,g,h\} = T (ad^*(f,g)T^{-1}(h))$. So, \eqref{458} holds.
\end{proof}
\begin{definition}
$(\mathcal{B}\oplus \mathcal{B}^*, [.,.,.]_{\mathcal{B}\oplus \mathcal{B}^*}, \omega)$ is called a phase space of $\mathcal{B}_{\rho}$, if
\begin{enumerate}
\item[i.]
$[.,.,.]_{\mathcal{B}\oplus \mathcal{B}^*}$ is a 3-$\rho$-Lie algebra structure on the vector space $\mathcal{B}\oplus \mathcal{B}^*$,
\item[ii.]
 $\omega$ is a natural non-degenerate $\rho$-skew-symmetric bilinear form on $\mathcal{B}\oplus \mathcal{B}^*$ given by 
 \begin{equation}\label{112}
 \omega(f+\alpha, g+\beta)=\alpha(g)-\rho(f,g)\beta(f),
 \end{equation}
 such that $(\mathcal{B}\oplus \mathcal{B}^*, [.,.,.]_{\mathcal{B}\oplus \mathcal{B}^*},\rho, \omega)$ is a symplectic 3-$\rho$-Lie algebra,
\item[iii.]
 $(\mathcal{B}, [.,.,.]_\mathcal{B},\rho)$ and $(\mathcal{B}^*, [.,.,.]_{\mathcal{B}^*},\rho)$ are 3-$\rho$-Lie subalgebras of $(\mathcal{B}\oplus \mathcal{B}^*, [.,.,.]_{\mathcal{B}\oplus \mathcal{B}^*},\rho)$.
\end{enumerate}
\end{definition}
\begin{theorem}\label{th.2020}
	A 3-$\rho$-Lie algebra has a phase space if and only if it is sub-adjacent to a 3-pre-$\rho$-Lie algebra.
\end{theorem}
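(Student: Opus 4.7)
The plan is to establish the two implications separately; both directions rest on transferring between symplectic 3-$\rho$-Lie data and 3-pre-$\rho$-Lie data via Proposition \ref{ass}.

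For the ``only if'' direction, suppose $(\mathcal{B}\oplus\mathcal{B}^*,[\cdot,\cdot,\cdot]_{\mathcal{B}\oplus\mathcal{B}^*},\omega)$ is a phase space. I would first invoke Proposition \ref{ass} on the ambient symplectic 3-$\rho$-Lie algebra to obtain a compatible 3-pre-$\rho$-Lie product $\{\cdot,\cdot,\cdot\}$ on $\mathcal{B}\oplus\mathcal{B}^*$ characterised by $\omega(\{e_1,e_2,e_3\},e_4)=-\rho(e_1+e_2,e_3)\omega(e_3,[e_1,e_2,e_4]_{\mathcal{B}\oplus\mathcal{B}^*})$. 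The next step is to verify that this product restricts to $\mathcal{B}$: for $f,g,h,s\in\mathcal{B}$ the right-hand side equals $-\rho(f+g,h)\omega(h,[f,g,s]_\mathcal{B})$, and since $\mathcal{B}$ is a 3-$\rho$-Lie subalgebra the inner bracket lies in $\mathcal{B}$, so the expression vanishes by the Lagrangian property of $\mathcal{B}$ built into \eqref{112}. Non-degeneracy of the pairing between $\mathcal{B}$ and $\mathcal{B}^*$ then forces $\{f,g,h\}\in\mathcal{B}$, and the already-guaranteed compatibility of $\{\cdot,\cdot,\cdot\}$ with $[\cdot,\cdot,\cdot]_{\mathcal{B}\oplus\mathcal{B}^*}$ descends to compatibility of the restriction with $[\cdot,\cdot,\cdot]_\mathcal{B}$.

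For the ``if'' direction, suppose $(\mathcal{B},\{\cdot,\cdot,\cdot\},\rho)$ is a 3-pre-$\rho$-Lie algebra whose sub-adjacent bracket is $[\cdot,\cdot,\cdot]_\mathcal{B}$. I would build the phase space from the regular representation $(L,R)$ of $\mathcal{B}_{\{\}}$. Dualising via the proposition on dual representations produces a representation of $\mathcal{B}_{\{\}}$ on $\mathcal{B}^*$, so Proposition \ref{a8} equips $\mathcal{B}\oplus\mathcal{B}^*$ with a 3-pre-$\rho$-Lie structure; passing to its sub-adjacent (Proposition \ref{a9}, cast via Proposition \ref{a14}) produces the candidate 3-$\rho$-Lie bracket $[\cdot,\cdot,\cdot]_{\mathcal{B}\oplus\mathcal{B}^*}$ through Proposition \ref{zahra}. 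I would then check the three phase-space axioms: $\mathcal{B}$ is a subalgebra because the restricted bracket is $[\cdot,\cdot,\cdot]_\mathcal{B}$ by construction; $\mathcal{B}^*$ is an abelian subalgebra because every representation-dependent summand vanishes when all three arguments lie in $\mathcal{B}^*$; and $\omega$ of \eqref{112} is manifestly $\rho$-skew-symmetric and non-degenerate.

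The principal obstacle is the verification that $\omega$ satisfies the four-term cocycle identity of Definition \ref{a15} on $\mathcal{B}\oplus\mathcal{B}^*$. I would split into cases according to the number of $\mathcal{B}^*$-inputs among the four arguments: the extremal cases (all in $\mathcal{B}$, or all in $\mathcal{B}^*$) are trivial, and the mixed cases reduce, after expanding $[\cdot,\cdot,\cdot]_{\mathcal{B}\oplus\mathcal{B}^*}$ by Proposition \ref{zahra} and substituting the defining formulas of the dualised representation, to a combination of the 3-pre-$\rho$-Lie identities \eqref{a2}--\eqref{a3} and the definition of $[\cdot,\cdot,\cdot]_c$. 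The delicate bookkeeping lies in tracking the many $\rho$-factors and the sign contributed by $-\tilde\mu^{*}$ on the dual side; once these are correctly assembled, the cocycle identity collapses and one recovers the target $\omega(\{f,g,h\},s)=-\rho(f+g,h)\omega(h,[f,g,s]_{\mathcal{B}\oplus\mathcal{B}^*})$ that witnesses $\omega$ as a symplectic structure.
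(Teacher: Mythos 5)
Your proposal is correct and follows essentially the same route as the paper: the converse direction uses Proposition \ref{ass} on the ambient symplectic algebra plus the Lagrangian property of $\mathcal{B}$ under \eqref{112} to force $\{f,g,h\}\in\mathcal{B}$, and the forward direction builds $\mathcal{B}^c\oplus\mathcal{B}^*$ from the dual of the left-multiplication representation (Lemma \ref{111}, Proposition \ref{a13}, Proposition \ref{zahra}) and verifies the four-term symplectic identity by direct expansion, exactly as the paper does. The only cosmetic difference is that you pass through the 3-pre-$\rho$-Lie structure on $\mathcal{B}\oplus\mathcal{B}^*$ and then take its sub-adjacent bracket, whereas the paper forms $[\cdot,\cdot,\cdot]^{L^*}$ directly; the paper's own remark after Proposition \ref{a14} shows these yield the same algebra.
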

\begin{proof}
	Let $\mathcal{B}_{\{\}}$ be a 3-pre-$\rho$-Lie algebra. By Propositions \ref{zahra}, \ref{a13}, \ref{ass} and Lemma \ref{111} we have the 3-$\rho$-Lie algebra $(\mathcal{B}^c\oplus \mathcal{B}^*, [.,.,.]^{L^*}_{\mathcal{B}^c\oplus \mathcal{B}^*},\rho)$. Moreover, for all $f_1, f_2, f_3, f_4\in Hg(\mathcal{B})$ and $\alpha_1, \alpha_2, \alpha_3, \alpha_4\in Hg(\mathcal{B}^*)$, by (\ref{dd}) with respect to $L^*$, we have
	\begin{align*}
	&\omega([f_1+\alpha_1,f_2+\alpha_2, f_3+\alpha_3 ]^{L^*}_{\mathcal{B}^c\oplus \mathcal{B}^*}, f_4+\alpha_4)\\
	&=-\rho(f_1+f_2+f_3, f_4)\alpha_4\{f_1, f_2, f_3\}-\rho(f_1, f_2+f_3)\rho(f_1+f_2+f_3, f_4)\alpha_4\{f_2, f_3, f_1\}\\
	&\ \ \ -\rho(f_1+f_2,f_3)\rho(f_1+f_2+f_3, f_4)\alpha_4\{f_3, f_1, f_2\}-\rho(f_1+f_2,f_3)\alpha_3\{f_1, f_2, f_4\}\\
	&\ \ \ -\alpha_1\{f_2, f_3, f_4\}-\rho(f_1+f_2,f_3)\rho(f_3+f_1, f_2)\alpha_2\{f_3, f_1, f_4\},\\
	&\\
	&\rho(f_1, f_2+f_3+f_4)\omega([f_2+\alpha_2,f_3+\alpha_3, f_4+\alpha_4 ]^{L^*}_{\mathcal{B}^c\oplus \mathcal{B}^*}, f_1+\alpha_1)\\
	&=-\alpha_1\{f_2, f_3, f_4\}-\rho(f_2, f_3+f_4)\alpha_1\{f_3, f_4, f_2\}-\rho(f_2+f_3,f_4)\alpha_1\{f_4, f_2, f_3\}\\
	&\ \ \ -\rho(f_1,f_2+f_3+f_4)\rho(f_2+f_3,f_4)\alpha_4\{f_2, f_3, f_1\}-\rho(f_1,f_2+f_3+f_4)\alpha_2\{f_3, f_4, f_1\}\\
	&\ \ \ -\rho(f_1,f_2+f_3+f_4)\rho(f_2+f_3,f_4)\rho(f_4+f_2, f_3)\alpha_3\{f_4, f_2, f_1\},\\
	&\\
	&\rho(f_1+f_2,f_3+f_4)\omega([f_3+\alpha_3,f_4+\alpha_4, f_1+\alpha_1 ]^{L^*}_{\mathcal{B}^c\oplus \mathcal{B}^*}, f_2+\alpha_2)\\
	&=-\rho(f_1,f_3+f_4+f_2)\alpha_2\{f_3, f_4, f_1\}\\
	&\ \ \ -\rho(f_1,f_4+f_1)\rho(f_3,f_4)\alpha_2\{f_4, f_1, f_3\}\\
	&\ \ \ -\rho(f_2,f_3+f_4)\rho(f_1+f_3+f_4,f_2)\alpha_2\{f_1, f_3, f_4\}\\
	&\ \ \ -\rho(f_2,f_3+f_4)\alpha_1\{f_3, f_4, f_2\}-\rho(f_1+f_2,f_3+f_4)\alpha_3\{f_4, f_1, f_2\}\\
	&\ \ \ -\rho(f_2,f_3+f_4)\rho(f_3+f_1, f_4)\alpha_4\{f_1, f_3, f_2\},\\
	&\\
	&\rho(f_1+f_2+f_3,f_4)\omega([f_4+\alpha_4,f_1+\alpha_1, f_2+\alpha_2 ]^{L^*}_{\mathcal{B}^c\oplus \mathcal{B}^*}, f_3+\alpha_3)\\
	&=-\rho(f_1+f_2,f_3+f_4)\alpha_3\{f_4, f_1, f_2\}-\rho(f_1+f_2,f_3)\alpha_3\{f_1, f_2, f_4\}\\
	&\ \ \ -\rho(f_1+f_2,f_3+f_4)\rho(f_4+f_1,f_2)\alpha_3\{f_2, f_4, f_1\}-\rho(f_1+f_3,f_4)\rho(f_1,f_2)\alpha_2\{f_4, f_1, f_3\}\\
	&\ \ \ -\rho(f_3,f_4)\rho(f_1+f_2,f_4)\alpha_4\{f_1, f_2, f_3\}-\rho(f_3,f_4)\alpha_1\{f_2, f_4, f_3\}.
	\end{align*}
	By the above relations, we deduce that $\omega$ is a symplectic structure on $\mathcal{B}^c\oplus\mathcal{B}^*$. Moreover, $\mathcal{B}^c$ and $\mathcal{B}^*$ are the subalgebra and the abelian subalgebra of $\mathcal{B}^c\oplus\mathcal{B}^*$, respectively. Thus, the symplectic 3-$\rho$-Lie algebra $(\mathcal{B}^c\oplus\mathcal{B}^*,[.,.,.]^{L^*}_{\mathcal{B}^c\oplus\mathcal{B}^*},\rho, \omega)$ is a phase space of $\mathcal{B}^c_{\rho}$. Conversely, let $(\mathcal{B}\oplus \mathcal{B}^*, [.,.,.]_{\mathcal{B}\oplus \mathcal{B}^*}, \rho,\omega)$ be a phase space of $\mathcal{B}_{\rho}$. 
	By Proposition \ref{ass}, there is a compatible 3-pre-$\rho$-Lie algebra structure $\{.,.,.\}$ on $\mathcal{B}\oplus \mathcal{B}^*$. Thus
	$$\omega(\{f,g,h\}, z) = -\rho(f+g,h)\omega(h, [f,g,z]_{\mathcal{B}\oplus \mathcal{B}^*}) = -\rho(f+g,h)\omega(h, [f,g,z]_\mathcal{B}) = 0 \quad \forall f,g,h,z\in Hg(\mathcal{B}).$$
	The last equality holds, since $\mathcal{B}_{\rho}$ is a subalgebra of $(\mathcal{B}\oplus \mathcal{B}^*, [.,.,.]_{\mathcal{B}\oplus \mathcal{B}^*},\rho)$.
	Therefore, $\{f,g,h\}\in \mathcal{B}_{\rho}$ that says $(\mathcal{B},\{.,.,.\}|_{\mathcal{B}},\rho)$ is a subalgebra of the 3-pre-$\rho$-Lie algebra $(\mathcal{B}\oplus \mathcal{B}^*, \{.,.,.\},\rho)$ and so its sub-adjacent 3-$\rho$-Lie algebra $\mathcal{B}^c_{\rho}$ is the 3-$\rho$-Lie algebra $\mathcal{B}_{\rho}$.
\end{proof}
\begin{corollary}
Let $(\mathcal{B}\oplus \mathcal{B}^*, [.,.,.]_{\mathcal{B}\oplus \mathcal{B}^*},\rho, \omega)$ be a phase space of $\mathcal{B}_{\rho}$ and $(\mathcal{B}\oplus \mathcal{B}^*, \{.,.,.\},\rho)$ be the associated 3-pre-$\rho$-Lie algebra. Then $(\mathcal{B},\{.,.,.\}|_{\mathcal{B}},\rho)$ and $(\mathcal{B}^*,\{.,.,.\}|_{\mathcal{B}^*},\rho)$ are subalgebras of the 3-pre-$\rho$-Lie algebra $(\mathcal{B}\oplus \mathcal{B}^*,\{.,.,.\},\rho)$.
\end{corollary}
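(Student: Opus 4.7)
The plan is to reduce both assertions to the defining property of $\{\cdot,\cdot,\cdot\}$ from Proposition~\ref{ass} together with the fact that $\mathcal{B}$ and $\mathcal{B}^*$ are mutually dual Lagrangian 3-$\rho$-Lie subalgebras of the phase space. Recall that by Proposition~\ref{ass} the 3-pre-$\rho$-Lie algebra structure is characterized by
\begin{equation*}
\omega\bigl(\{x,y,z\},w\bigr) = -\rho(x+y,z)\,\omega\bigl(z,[x,y,w]_{\mathcal{B}\oplus \mathcal{B}^*}\bigr), \qquad \forall\, x,y,z,w \in Hg(\mathcal{B}\oplus \mathcal{B}^*),
\end{equation*}
and that formula \eqref{112} makes $\omega$ vanish on both $\mathcal{B}\times\mathcal{B}$ and $\mathcal{B}^*\times\mathcal{B}^*$, while pairing $\mathcal{B}$ with $\mathcal{B}^*$ non-degenerately.

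For the claim about $\mathcal{B}$, fix homogeneous $f,g,h\in\mathcal{B}$ and test $\omega(\{f,g,h\},z)$ against an arbitrary $z\in Hg(\mathcal{B})$. Since $\mathcal{B}$ is a 3-$\rho$-Lie subalgebra of the phase space, $[f,g,z]_{\mathcal{B}\oplus\mathcal{B}^*}=[f,g,z]_{\mathcal{B}}\in\mathcal{B}$; then the Lagrangian property of $\mathcal{B}$ forces $\omega(h,[f,g,z]_{\mathcal{B}})=0$, so the defining identity yields $\omega(\{f,g,h\},z)=0$ for every $z\in\mathcal{B}$. Writing the homogeneous decomposition $\{f,g,h\}=u+\gamma$ with $u\in\mathcal{B}$ and $\gamma\in\mathcal{B}^*$, the formula \eqref{112} reduces $\omega(u+\gamma,z)$ to $\gamma(z)$ for $z\in\mathcal{B}$; thus $\gamma$ annihilates all of $\mathcal{B}$ and so $\gamma=0$. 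Hence $\{f,g,h\}\in\mathcal{B}$, and $(\mathcal{B},\{\cdot,\cdot,\cdot\}|_\mathcal{B},\rho)$ is a 3-pre-$\rho$-Lie subalgebra.

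The assertion for $\mathcal{B}^*$ follows by the perfectly symmetric argument: for homogeneous $\alpha,\beta,\gamma\in\mathcal{B}^*$ and any $\eta\in Hg(\mathcal{B}^*)$, invoking that $\mathcal{B}^*$ is a 3-$\rho$-Lie subalgebra of the phase space gives $[\alpha,\beta,\eta]_{\mathcal{B}\oplus\mathcal{B}^*}\in\mathcal{B}^*$, and the Lagrangian property of $\mathcal{B}^*$ (again from \eqref{112}) kills $\omega(\gamma,[\alpha,\beta,\eta]_{\mathcal{B}^*})$; the defining identity then yields $\omega(\{\alpha,\beta,\gamma\},\eta)=0$ for all $\eta\in\mathcal{B}^*$. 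Writing $\{\alpha,\beta,\gamma\}=u+\delta$ with $u\in\mathcal{B}$ and $\delta\in\mathcal{B}^*$, the non-degeneracy of the $\mathcal{B}$-$\mathcal{B}^*$ pairing built into \eqref{112} forces $u=0$, so $\{\alpha,\beta,\gamma\}\in\mathcal{B}^*$. There is no substantive obstacle in the argument; the entire corollary is essentially a restatement of the closure step that already appeared in the proof of Theorem~\ref{th.2020}, now extracted and applied symmetrically on each of the two Lagrangian pieces.
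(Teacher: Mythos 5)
Your proof is correct and follows essentially the same route as the paper: the closure of $\mathcal{B}$ under $\{\cdot,\cdot,\cdot\}$ via the identity $\omega(\{f,g,h\},z)=-\rho(f+g,h)\omega(h,[f,g,z]_{\mathcal{B}})=0$ and the isotropy of $\mathcal{B}$ is exactly the argument embedded in the converse direction of the proof of Theorem~\ref{th.2020}, and the paper states this corollary without further proof precisely because the $\mathcal{B}^*$ case is the symmetric counterpart you spell out. Your only addition is making the Lagrangian/non-degeneracy step explicit (decomposing $\{f,g,h\}=u+\gamma$ and using \eqref{112}), which is a faithful elaboration rather than a different method.
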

\begin{lemma}
Let $(V,\mu)$ be a representation of $\mathcal{B}_{\rho}$ and $(\mathcal{B}\oplus \mathcal{B}^*, [.,.,.]^{\mu^*}_{\mathcal{B}\oplus \mathcal{B}^*},\rho, \omega)$ be a phase space of it. Then there exists a 3-pre-$\rho$-Lie algebra structure on $\mathcal{B}$ given by
$$\{f,g,h\}= \mu(f,g)h,\quad \forall f,g,h\in \mathcal{B}.$$
\end{lemma}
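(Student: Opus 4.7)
The plan is to apply Proposition~\ref{ass} to the phase space itself. By hypothesis, $(\mathcal{B}\oplus\mathcal{B}^*, [.,.,.]^{\mu^*}_{\mathcal{B}\oplus\mathcal{B}^*}, \rho, \omega)$ is a symplectic 3-$\rho$-Lie algebra, so Proposition~\ref{ass} yields a compatible 3-pre-$\rho$-Lie algebra structure $\{.,.,.\}$ on $\mathcal{B}\oplus\mathcal{B}^*$ characterized by
\[
\omega(\{X,Y,Z\},W)=-\rho(X+Y,Z)\,\omega\bigl(Z,[X,Y,W]^{\mu^*}_{\mathcal{B}\oplus\mathcal{B}^*}\bigr).
\]
Because $\mathcal{B}$ is, by the definition of a phase space, a 3-$\rho$-Lie subalgebra of $\mathcal{B}\oplus\mathcal{B}^*$, the corollary following Theorem~\ref{th.2020} ensures that $\mathcal{B}$ is also a subalgebra of this 3-pre-$\rho$-Lie algebra, so $\{.,.,.\}$ restricts to a 3-pre-$\rho$-Lie structure on $\mathcal{B}$.

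It remains to identify the restricted operation with $\{f,g,h\}=\mu(f,g)h$ for $f,g,h\in\mathcal{B}$. The strategy is to test against an arbitrary homogeneous $\beta\in\mathcal{B}^*$ via the non-degenerate pairing $\omega$. Reading the formula for $\omega$ in \eqref{112} gives $\omega(x,\beta)=-\beta(x)$ for $x\in\mathcal{B}$, hence
\[
\omega(\{f,g,h\},\beta)=-\beta(\{f,g,h\}).
\]
On the other side, Proposition~\ref{zahra} reduces $[f,g,\beta]^{\mu^*}_{\mathcal{B}\oplus\mathcal{B}^*}$ to $\mu^*(f,g)\beta$, and Proposition~\ref{a13} expands this as $-\rho(f+g,\beta)\,\beta\circ\mu(f,g)$. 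Substituting into the defining identity for $\{f,g,h\}$ and using $\omega(h,\gamma)=-\gamma(h)$ for $\gamma\in\mathcal{B}^*$, the right-hand side of the defining identity becomes $-\rho(f+g,h)\rho(f+g,\beta)\,\beta(\mu(f,g)h)$. Non-degeneracy of $\omega$ and cancellation of $\rho$-factors using the cocycle relations $\rho(a,b)\rho(b,a)=1$ and the degree constraints (the pairing $\beta(\mu(f,g)h)$ forces $|\beta|=-(|f|+|g|+|h|)$) then identify $\{f,g,h\}$ with $\mu(f,g)h$.

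The only real obstacle is the bookkeeping of $\rho$-factors in the last step: one must combine the factor $\rho(f+g,h)$ coming from Proposition~\ref{ass}, the factor $\rho(f+g,\beta)$ produced by the dual representation formula, and the implicit grading conditions supplied by $\omega$ and by evaluation on homogeneous elements, and verify that they collapse to yield exactly $\mu(f,g)h$ rather than a twisted variant. Once this identification is made, the 3-pre-$\rho$-Lie axioms for $\{.,.,.\}$ on $\mathcal{B}$ are inherited for free from those already established on the larger space $\mathcal{B}\oplus\mathcal{B}^*$.
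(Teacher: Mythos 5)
Your route is the same as the paper's: invoke Proposition~\ref{ass} to get the compatible 3-pre-$\rho$-Lie structure, note (via the corollary after Theorem~\ref{th.2020}) that it restricts to $\mathcal{B}$, and identify the restriction with $\mu(f,g)h$ by pairing against an arbitrary homogeneous $\alpha\in\mathcal{B}^*$ through $\omega$ and using non-degeneracy together with $[f,g,\alpha]^{\mu^*}_{\mathcal{B}\oplus\mathcal{B}^*}=\mu^*(f,g)\alpha$ and $\mu^*(f,g)\alpha=-\rho(f+g,\alpha)\,\alpha\circ\mu(f,g)$. The paper's proof is exactly this chain of equalities, so conceptually there is nothing missing.

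There is, however, one concrete slip in your execution, and it sits precisely at the point you flag as ``the only real obstacle.'' Formula \eqref{112} gives, for homogeneous $x\in\mathcal{B}$ and $\beta\in\mathcal{B}^*$, $\omega(x,\beta)=-\rho(x,\beta)\,\beta(x)$, not $-\beta(x)$; the second argument $0+\beta$ still carries degree $|\beta|$, so the factor $\rho(f,g)$ in \eqref{112} does not disappear. If you carry your simplified formulas through, the comparison reads $\beta(\{f,g,h\})=\rho(f+g,h)\rho(f+g,\beta)\,\beta(\mu(f,g)h)$, and since $\beta(\mu(f,g)h)\neq 0$ forces $|\beta|=-(|f|+|g|+|h|)$, this collapses to $\{f,g,h\}=\rho(f+g,f+g)^{-1}\mu(f,g)h$ --- exactly the ``twisted variant'' you warn about, and since $\rho(f+g,f+g)=\rho(f,f)\rho(g,g)$ can equal $-1$, it need not be the claimed formula. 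Restoring the dropped factors, i.e.\ using $\omega(\{f,g,h\},\beta)=-\rho(f+g+h,\beta)\beta(\{f,g,h\})$ on the left and $\omega(h,\gamma)=-\rho(h,\gamma)\gamma(h)$ with $\gamma=\mu^*(f,g)\beta$ on the right, the relation $\rho(f+g,h)\rho(h,f+g)=1$ makes everything cancel and yields $\beta(\{f,g,h\})=\beta(\mu(f,g)h)$ on the nose, which is the paper's computation. So the approach is correct, but the verification you defer is the whole content of the proof, and as written your intermediate formulas would not deliver the stated conclusion.
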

\begin{proof}
Assume that $f,g,h\in Hg(\mathcal{B})$ and $\alpha\in Hg(\mathcal{B}^*)$. Then, we have
\begin{align*}
 \rho(f+g+h,\alpha)\alpha(\{f,g,h\})&=-\omega(\{f,g,h\},\alpha)=\rho(f+g,h)\omega(h,[f,g,\alpha]_{\mathcal{B}\oplus \mathcal{B}^*})=\rho(f+g,h)\omega(h,\mu^*(f,g)\alpha)\\
 &=-\rho(f+g,h)\rho(h,f+g+\alpha)\mu^*(f,g)\alpha(h)= \rho(f+g+h,\alpha)\alpha\mu(f,g)h.
\end{align*}
\end{proof}
%-----------------------------------------------------------------------------------------------
\section{quadratic 3-hom-$\rho$-lie algebras}
In this section, we study 3-Hom-$\rho$-lie algebras, which 3-$\rho$-lie algebras are special case of these Lie algebras with respect to the map $Id$. The constructions of Section 2, such as quadratic, symplectic structures, 3-associative Lie algebras and etc, will be investigated in this section to color Hom-Lie case. In this section, the representation theory of 3-Hom-$\rho$-lie algebras is also included.
\begin{definition}
Multiple $(\mathcal{B},[.,.]_{\mathcal{B}},\rho,\phi)$ is called a 2-Hom-$\rho$-Lie algebra (or simply a Hom-$\rho$-Lie algebra) if 
\begin{enumerate}
\item[i.] 
$\mathcal{B}$ is a $G$-graded vector space, where $G$ is an abelian group,
\item[ii.]
$\rho$ is a two-cycle,
\item[iii.]
$\phi:\mathcal{B}\longrightarrow \mathcal{B}$ is an even linear map,
\item[iv.]
$[.,.]_{\mathcal{B}}$ is a a bilinear map on $\mathcal{B}$, satisfying the following conditions
\begin{align*}
&\bullet |[f,g]_{\mathcal{B}}|= | f|+| g|,\\
&\bullet [f, g]_{\mathcal{B}} = -\rho(f,g)[g, f]_{\mathcal{B}},\\
&\bullet \rho(h,f)[\phi(f), [g, h]_{\mathcal{B}}]_{\mathcal{B}}+\rho(g,h)[\phi(h), [f, g]_{\mathcal{B}}]_{\mathcal{B}}+\rho(f,g)[\phi(g), [h, f]_{\mathcal{B}}]_{\mathcal{B}} = 0\quad \forall f,g,h\in \mathcal{B}.
\end{align*} 
\end{enumerate}
The third condition is equivalent to 
$$[\phi(f), [g, h]_{\mathcal{B}}]_{\mathcal{B}}=[[f, g]_{\mathcal{B}},\phi(h)]_{\mathcal{B}}+\rho(f,g)[\phi(g), [f, h]_{\mathcal{B}}]_{\mathcal{B}}.$$
\end{definition}
\begin{definition}
Let
\begin{enumerate}
\item[i.] 
$\mathcal{B}$ be a $G$-graded vector space,
\item[ii.]
$\phi:\mathcal{B}\longrightarrow \mathcal{B}$ be an even linear map,
\item[iii.]
$\rho$ be a two-cycle,
\item[iv.]
$[.,.,.]_{\mathcal{B}}$ be a trilinear map on $\mathcal{B}$ satisfying
\begin{align*} 
&|[f_1,f_2,f_3]_{\mathcal{B}}|=|f_1|+ |f_2| +|f_3|,\\
&[\phi(f_1),\phi(f_2),[g_1,g_2,g_3]_{\mathcal{B}}]_{\mathcal{B}}=[[f_1,f_2,g_1]_{\mathcal{B}},\phi(g_2),\phi(g_3)]_{\mathcal{B}}+\rho(f_1+f_2,g_1)[\phi(g_1),[f_1,f_2,g_2]_{\mathcal{B}},\phi(g_3)]_{\mathcal{B}}\\
&\ \ \ \ \ \ \ \ \ \ \ \ \ \ \ \ \ \ \ \ \ \ \ \ \ \ \ \ \ \ \ \ \ \ \ \ +\rho(f_1+f_2, g_1+g_2)[\phi(g_1),\phi(g_2),[f_1,f_2,g_3]_{\mathcal{B}}]_{\mathcal{B}},\\
&[f_1,f_2,f_3]_{\mathcal{B}}=-\rho(f_1,f_2)[f_2,f_1,f_3]_{\mathcal{B}}=-\rho(f_2,f_3)[f_1,f_3,f_2]_{\mathcal{B}},
\end{align*}
\end{enumerate}
for any $f_1,f_2,f_3,g_1,g_2\in Hg(\mathcal{B})$. Then $(\mathcal{B},[\cdot,\cdot,\cdot]_{\mathcal{B}},\rho,\phi)$ is called a 3-$\rho$-Lie algebra. Let us denote the 3-Hom-$\rho$-Lie algebra $(\mathcal{B},[\cdot,\cdot,\cdot]_{\mathcal{B}},\rho,\phi)$ by $\mathcal{B}_{\rho,\phi}$.
\end{definition}
\begin{definition}
Let $(\mathcal{B},[\cdot,\cdot,\cdot]_\mathcal{B},\phi)$ and $(\mathcal{A},[\cdot,\cdot,\cdot]_{\mathcal{A}},\psi)$ be two 3-Hom-$\rho$-Lie algebras. A linear map $\alpha:\mathcal{B}\longrightarrow \mathcal{A}$ is said to be a morphism of 3-Hom-$\rho$-Lie algebras if for all $f,g,h\in \mathcal{B}$
$$\alpha[f,g,h]_\mathcal{B}=[\alpha(f),\alpha(g),\alpha(h)]_{\mathcal{A}},$$
 and 
$$\alpha\circ\phi=\psi\circ\alpha.$$
\end{definition}
\begin{definition}
$\mathcal{B}_{\rho,\phi}$ is said to be
\begin{enumerate}
\item
 multiplicative if $\phi$ is a Lie algebra morphism, i.e. for any $f,g,h\in \mathcal{B}$,
 $\phi[f,g,h]_{\mathcal{B}}=[\phi(f),\phi(g),\phi(h)]_{\mathcal{B}}.$
 \item
regular if $\phi$ is an automorphism for $[\cdot,\cdot,\cdot]_\mathcal{B}$,
\item
involutive if $\phi^2 = Id_\mathcal{B}$.
\end{enumerate}
\end{definition}
In the following theorem, a new 3-Hom-$\rho$-Lie algebra structure will be constructed by combining a 3-Hom-$\rho$-Lie algebra structure and an even 3-$\rho$-Lie algebra endomorphism.
\begin{theorem}\label{Th.124}
Consider $\mathcal{B}_{\rho,\phi}$ and let $\beta:\mathcal{B}_{\rho,\phi}\longrightarrow \mathcal{B}_{\rho,\phi}$ be 
an even 3-$\rho$-Lie algebra endomorphism. Then $(\mathcal{B},[\cdot,\cdot,\cdot]_{\beta}=\beta\circ [\cdot,\cdot,\cdot]_\mathcal{B},\beta\circ \phi)$ is a 3-Hom-$\rho$-Lie algebra.
\end{theorem}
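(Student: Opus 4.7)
The plan is to verify the four defining conditions of a 3-Hom-$\rho$-Lie algebra for the twisted datum $(\mathcal{B}, [\cdot,\cdot,\cdot]_{\beta}, \rho, \beta\circ\phi)$. The $G$-graded vector space is unchanged, and $\beta\circ\phi$ is an even linear map as the composition of two even linear maps. Trilinearity of $[\cdot,\cdot,\cdot]_{\beta}=\beta\circ[\cdot,\cdot,\cdot]_{\mathcal{B}}$ is immediate, and the degree condition $|[f_1,f_2,f_3]_{\beta}|=|f_1|+|f_2|+|f_3|$ follows because $\beta$ is even and the original bracket preserves degree.

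Next I would verify the $\rho$-skew-symmetry. Applying $\beta$ (a linear map) to each of the equalities
$[f_1,f_2,f_3]_{\mathcal{B}}=-\rho(f_1,f_2)[f_2,f_1,f_3]_{\mathcal{B}}=-\rho(f_2,f_3)[f_1,f_3,f_2]_{\mathcal{B}}$
gives the corresponding identities with $[\cdot,\cdot,\cdot]_{\beta}$ in place of $[\cdot,\cdot,\cdot]_{\mathcal{B}}$, since the two-cycle values are scalars and commute with $\beta$.

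The main step, and the only real computation, is the Hom-fundamental identity with twisting map $\beta\circ\phi$. The key observation is that $\beta$ being an even endomorphism of the 3-$\rho$-Lie bracket means $\beta[a,b,c]_{\mathcal{B}}=[\beta(a),\beta(b),\beta(c)]_{\mathcal{B}}$, so $\beta$ can be pulled in and out of the bracket freely. Expanding the left-hand side,
\begin{align*}
[\beta\phi(f_1),\beta\phi(f_2),[g_1,g_2,g_3]_{\beta}]_{\beta}
&=\beta\bigl[\beta\phi(f_1),\beta\phi(f_2),\beta[g_1,g_2,g_3]_{\mathcal{B}}\bigr]_{\mathcal{B}}\\
&=\beta^{2}\bigl[\phi(f_1),\phi(f_2),[g_1,g_2,g_3]_{\mathcal{B}}\bigr]_{\mathcal{B}},
\end{align*}
where the second equality uses the endomorphism property three times to factor a $\beta$ out of each slot. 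Then the fundamental identity for $\mathcal{B}_{\rho,\phi}$ rewrites this as $\beta^{2}$ applied to the three-term right-hand side of the original identity. The three terms of the Hom-fundamental identity for $[\cdot,\cdot,\cdot]_{\beta}$ with twist $\beta\circ\phi$ can be processed in the same way: for instance,
$$[[f_1,f_2,g_1]_{\beta},\beta\phi(g_2),\beta\phi(g_3)]_{\beta}
=\beta\bigl[\beta[f_1,f_2,g_1]_{\mathcal{B}},\beta\phi(g_2),\beta\phi(g_3)\bigr]_{\mathcal{B}}
=\beta^{2}\bigl[[f_1,f_2,g_1]_{\mathcal{B}},\phi(g_2),\phi(g_3)\bigr]_{\mathcal{B}},$$
and analogously for the other two terms. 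Matching the scalar coefficients $\rho(f_1+f_2,g_1)$ and $\rho(f_1+f_2,g_1+g_2)$ termwise, both sides reduce to $\beta^{2}$ applied to the corresponding identity in $\mathcal{B}_{\rho,\phi}$, and the conclusion follows.

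The main obstacle I anticipate is simply bookkeeping: one must be careful that every $\beta$ inserted by the definition of $[\cdot,\cdot,\cdot]_{\beta}$ ends up inside a single outer $\beta$ via the endomorphism property, yielding exactly $\beta^{2}$ on each term with no stray scalars or degree shifts. Because $\beta$ is even, the two-cycle values $\rho(\cdot,\cdot)$ attached to the various terms remain unchanged under this rearrangement, so no degree correction is needed.
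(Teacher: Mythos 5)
Your proposal is correct and follows essentially the same route as the paper: both reduce each term of the Hom-fundamental identity for $[\cdot,\cdot,\cdot]_{\beta}$ with twist $\beta\circ\phi$ to $\beta^{2}$ applied to the corresponding term of the original identity in $\mathcal{B}_{\rho,\phi}$, using the endomorphism property of $\beta$, and dispose of the grading and $\rho$-skew-symmetry conditions by linearity. Your write-up is in fact slightly more explicit than the paper's about how the two applications of $\beta$ arise (one from each occurrence of $[\cdot,\cdot,\cdot]_{\beta}$) and why the scalars $\rho(\cdot,\cdot)$ are unaffected.
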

\begin{proof}
It is easy to see that $[\cdot,\cdot,\cdot]_{\beta}$ is $\rho$-skew symmetric with respect to the displacement of every two elements. Now, we check the $\rho$-fundamental identity:
\begin{align*}
&(a)~~[\beta\circ\phi(f_1), \beta\circ\phi(f_2),[f_3,f_4,f_5]_{\beta}]_{\beta}=[\beta\circ\phi(f_1), \beta\circ\phi(f_2),\beta\circ[f_3,f_4,f_5]_\mathcal{B}]_{\beta}\\
&\ \ \ \ \ \ \ \ \ \ \ \ \ \ \ \ \ \ \ \ \ \ \ \ \ \ \ \ \ \ \ \ \ \ \ \ \ \ \ \ \ \ \ \ =\beta^2[\phi(f_1), \phi(f_2),[f_3,f_4,f_5]_\mathcal{B}]_\mathcal{B},\\
&(b)~~[[f_1,f_2,f_3]_{\beta},\beta\circ\phi(f_4), \beta\circ\phi(f_5)]_{\beta}=\beta^2[[f_1,f_2,f_3]_\mathcal{B},\phi(f_4), \phi(f_5)]_\mathcal{B},\\
&(c)~~\rho(f_1+f_2,f_3)[\beta\circ\phi(f_3),[f_1,f_2,f_4]_{\beta},\beta\circ\phi(f_5)]_{\beta}=\rho(f_1+f_2,f_3)\beta^2[\phi(f_3),[f_1,f_2,f_4]_\mathcal{B},\phi(f_5)]_\mathcal{B}\\
&(d)~~\rho(f_1+f_2,f_3+f_4)[\beta\circ\phi(f_3),\beta\circ\phi(f_4),[f_1,f_2,f_5]_{\beta}]_{\beta}=\rho(f_1+f_2,f_3+f_4)\beta^2[\phi(f_3),\phi(f_4),,[f_1,f_2,f_5]_\mathcal{B}]_\mathcal{B}.
\end{align*}
According to the above four items and this fact that $\mathcal{B}$ is a 3-Hom-$\rho$-Lie algebra, the proof is complete.
\end{proof}
\begin{example}
Consider $\mathcal{B}_{\rho,\phi}$ and let $\phi$ be a 3-$\rho$-Lie algebra
morphism. Then $(\mathcal{B},[\cdot,\cdot,\cdot]_{\phi}=\phi\circ [\cdot,\cdot,\cdot]_\mathcal{B},\rho,\phi)$ is a multiplicative 3-Hom-$\rho$-Lie algebra.
\end{example}
\begin{definition}
A non-degenerate bilinear form $\chi$ on $\mathcal{B}_{\rho,\phi}$ satisfying
	\begin{enumerate}
		\item[i.]
		$\chi(f_1, f_2) = \rho(f_1,f_2) \chi(f_2,f_1)\quad \forall f_1,f_2\in Hg(\mathcal{B})$~~ ( $\rho$-symmetric),
		\item[ii.]
		$\chi([f_1,f_2,f_3]_\mathcal{B},f_4) = \chi(f_1, [f_2,f_3,f_4]_\mathcal{B})\quad\forall f_1,f_2,f_3,f_4\in \mathcal{B}$ (invariant),
		\item[iii.]
		$\chi(\phi(f_1), f_2)=\chi(f_1,\phi(f_2))\quad$ ($\phi$ is $\chi$-symmetric).
	\end{enumerate}
	is called a quadratic structure on $\mathcal{B}_{\rho,\phi}$. A quadratic 3-Hom-$\rho$-Lie algebra is a 3-Hom-$\rho$-Lie algebra equipped with a quadratic structure. We denote a quadratic 3-Hom-$\rho$-Lie algebra by $\mathcal{B}_{\rho,\phi,\chi}$. Also, We recover quadratic 3-$\rho$-Lie algebras when $\phi=id_{\mathcal{B}}$.
\end{definition}
Consider the subalgebra ${\rm Cent(\mathcal{B}_{\rho})}$ of ${\rm End(\mathcal{B}_{\rho})}$ defined by \ref{159}, we have the following:
\begin{proposition}
	Let $\psi$ be an even element in ${\rm Cent(\mathcal{B}_{\rho,\phi})}$. For any $f,g,h\in Hg(\mathcal{B})$, we define two following structures
	$$[f,g,h]^{\psi}=[\psi(f), g, h]_\mathcal{B},\quad [f,g,h]^{\psi}_{\psi}=[\psi(f), \psi(g). \psi(h)]_\mathcal{B}.$$
	Then, we have the following 3-Hom-$\rho$-Lie algebras
	\begin{align*}
	&(\mathcal{B}, [.,.,.],\rho,\psi\circ\phi),\quad (\mathcal{B}, [.,.,.]^{\psi},\rho,\phi),\quad (\mathcal{B}, [.,.,.]^{\psi},\rho,\psi\circ\phi),\\
	&(\mathcal{B}, [.,.,.]_{\psi}^{\psi}, \rho, \phi), \quad (\mathcal{B}, [.,.,.]_{\psi}^{\psi}, \rho, \psi\circ\phi).
	\end{align*}
\end{proposition}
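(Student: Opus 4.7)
The plan is to verify, uniformly for all five candidates, the three defining axioms of a $3$-Hom-$\rho$-Lie algebra: the grading condition, the $\rho$-skew symmetry and the $\rho$-fundamental identity. Two simplifying remarks will reduce every case to the identities already satisfied by $(\mathcal{B}, [\cdot,\cdot,\cdot]_\mathcal{B}, \rho, \phi)$. First, since $\psi$ is even, every factor $\rho(\psi,\cdot)$ in the centroid definition \eqref{159} is $1$, so the centroid condition becomes
\[
\psi\bigl([f,g,h]_\mathcal{B}\bigr) = [\psi(f),g,h]_\mathcal{B} = [f,\psi(g),h]_\mathcal{B} = [f,g,\psi(h)]_\mathcal{B},
\]
and $\psi$ may consequently be pulled freely in and out of any slot. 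In particular, the two new brackets are just iterated scalar-twists of the original:
\[
[f,g,h]^{\psi}=\psi\bigl([f,g,h]_\mathcal{B}\bigr),\qquad [f,g,h]^{\psi}_{\psi}=\psi^{3}\bigl([f,g,h]_\mathcal{B}\bigr).
\]
Second, $\psi\circ\phi$ is even as a composition of two even maps, so it is a valid twist map.

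With these remarks, the grading axiom is immediate ($\psi^{k}\phi^{j}$ has degree $0$), and $\rho$-skew symmetry is inherited by post-composition: applying an even graded-linear map (the identity, $\psi$, or $\psi^{3}$) to a $\rho$-skew symmetric trilinear bracket keeps it $\rho$-skew symmetric.

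The main step is the $\rho$-fundamental identity, which I would treat in a single blow by writing any proposed bracket as $[\cdot,\cdot,\cdot]^{(k)} := \psi^{k}\!\circ[\cdot,\cdot,\cdot]_\mathcal{B}$ with $k\in\{0,1,3\}$ and any proposed twist as $\psi^{j}\!\circ\phi$ with $j\in\{0,1\}$. Expanding each term of the new $\rho$-fundamental identity and repeatedly applying the centroid identity to pull every $\psi$ out of every slot, one finds
\[
\bigl[(\psi^{j}\phi)(f_1),(\psi^{j}\phi)(f_2),[g_1,g_2,g_3]^{(k)}\bigr]^{(k)}
=\psi^{2k+2j}\bigl[\phi(f_1),\phi(f_2),[g_1,g_2,g_3]_\mathcal{B}\bigr]_\mathcal{B},
\]
and, analogously, each of the three terms on the right-hand side of the new identity carries the same factor $\psi^{2k+2j}$ in front of its $\phi$-bracket analogue. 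Since the original $\rho$-fundamental identity of $(\mathcal{B}, [\cdot,\cdot,\cdot]_\mathcal{B}, \rho, \phi)$ holds, applying $\psi^{2k+2j}$ to both sides gives the new identity, covering all five pairs $(k,j)\in\{(0,1),(1,0),(1,1),(3,0),(3,1)\}$ at once.

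The only real obstacle is the bookkeeping of the $\psi$-powers that accumulate after repeated use of the centroid identity: one must confirm that the same total power $\psi^{2k+2j}$ appears in front of every term, and that the $\rho$-coefficients appearing in the fundamental identity (which depend only on $|f_1|,|f_2|,|g_1|,|g_2|$) are not altered by the insertion of the even map $\psi$. Because $\psi$ is even, these potential sign issues collapse to $1$, and no hypothesis beyond $\psi\in{\rm Cent}(\mathcal{B}_{\rho,\phi})$ and the original 3-Hom-$\rho$-Lie axioms is needed.
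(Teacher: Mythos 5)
Your proof is correct and follows essentially the same route as the paper, which simply invokes the argument of Proposition \ref{as}: since $\psi$ is even all factors $\rho(\psi,\cdot)$ equal $1$, so the centroid identity lets one pull every $\psi$ out of every slot, each term of the $\rho$-fundamental identity acquires the same overall power of $\psi$, and the identity reduces to the original one for $(\mathcal{B},[\cdot,\cdot,\cdot]_\mathcal{B},\rho,\phi)$. Your parametrization by $(k,j)$ merely packages the paper's five case-by-case verifications into a single uniform computation.
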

\begin{proof}
	This proposition will be proved in the similar way of Proposition \ref{as}.
\end{proof}
\begin{proposition}
	Let $\psi$ be an even invertible and $\chi$-symmetric element of the centroid of $\mathcal{B}_{\rho,\phi,\chi}$ such that $\psi\circ\phi=\phi\circ\psi$.
	Then $\mathcal{B}$ together with two structures $[.,.,.]^{\psi}, [.,.,.]_{\psi}^{\psi}$ and the even bilinear form $\chi_{\psi}$ defined by $\chi_{\psi}(f,g) = \chi(\psi(f), g)$ makes two quadratic 3-Hom-$\rho$-Lie algebras.
\end{proposition}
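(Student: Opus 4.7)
The plan is to mirror the argument of Proposition \ref{gh} while accounting for the extra data (the twisting map $\phi$) that distinguishes the Hom case. There are four things to verify for $\chi_\psi$ to be a quadratic structure on each of the algebras $(\mathcal{B}, [.,.,.]^{\psi}, \rho, \phi')$ and $(\mathcal{B}, [.,.,.]^{\psi}_{\psi}, \rho, \phi')$ (where $\phi'$ is either $\phi$ or $\psi\circ\phi$, both of which produce 3-Hom-$\rho$-Lie algebras by the previous proposition): non-degeneracy, $\rho$-symmetry, invariance, and $\chi_\psi$-symmetry of the twisting map.

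First I would observe that $\chi_\psi$ is non-degenerate because $\psi$ is invertible: if $\chi(\psi(f),g)=0$ for every $g$, then $\psi(f)=0$ by non-degeneracy of $\chi$, and hence $f=0$. Next, since $\psi$ is even, centroid elements satisfy the simplified identities $[\psi(f),g,h]_\mathcal{B}=[f,\psi(g),h]_\mathcal{B}=[f,g,\psi(h)]_\mathcal{B}=\psi[f,g,h]_\mathcal{B}$, and the $\rho$-symmetry of $\chi_\psi$ follows from a one-line computation using the $\chi$-symmetry of $\psi$:
\[
\chi_\psi(f,g)=\chi(\psi(f),g)=\rho(f,g)\chi(g,\psi(f))=\rho(f,g)\chi(\psi(g),f)=\rho(f,g)\chi_\psi(g,f).
\]

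For invariance under $[.,.,.]^{\psi}$, the plan is to push $\psi$ across the bracket via the centroid property and then transfer it across $\chi$ via $\chi$-symmetry: one expects
\[
\chi_\psi([f_1,f_2,f_3]^{\psi},f_4)=\chi(\psi^2(f_1),[f_2,f_3,f_4]_\mathcal{B})=\chi_\psi(f_1,[f_2,f_3,f_4]^{\psi}).
\]
The case $[.,.,.]^{\psi}_{\psi}$ is handled identically but produces $\psi^4(f_1)$ after all three centroid moves and one extra $\psi$ from the outer application; again the $\chi$-symmetry of $\psi$ lets the powers of $\psi$ be redistributed between the two arguments.

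The genuinely new ingredient, and the step I expect to require the hypothesis $\psi\circ\phi=\phi\circ\psi$, is the verification that the twisting map is $\chi_\psi$-symmetric. For $\phi' = \phi$ this is
\[
\chi_\psi(\phi(f),g)=\chi(\psi\phi(f),g)=\chi(\phi\psi(f),g)=\chi(\psi(f),\phi(g))=\chi_\psi(f,\phi(g)),
\]
using commutativity of $\psi$ and $\phi$ together with the $\chi$-symmetry of $\phi$ supplied by the quadratic Hom structure $\mathcal{B}_{\rho,\phi,\chi}$. For $\phi'=\psi\circ\phi$, one pushes one $\psi$ to the other side by $\chi$-symmetry of $\psi$ and then repeats the argument above. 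Once these four points are assembled, the conclusion that both triples are quadratic 3-Hom-$\rho$-Lie algebras follows immediately, and no calculation beyond what is indicated above is required.
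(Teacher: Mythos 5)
Your proposal is correct and follows essentially the same route as the paper: the non-degeneracy, $\rho$-symmetry and invariance of $\chi_{\psi}$ are inherited from the proof of the corresponding un-twisted proposition, and the only genuinely new verification is the chain $\chi_{\psi}(\phi(f),g)=\chi(\psi\circ\phi(f),g)=\chi(\phi\circ\psi(f),g)=\chi(\psi(f),\phi(g))=\chi_{\psi}(f,\phi(g))$, which is exactly the computation the paper gives. Your extra remarks on the alternate twist $\psi\circ\phi$ and the $\psi^{2}$/$\psi^{4}$ bookkeeping are harmless elaborations of the same argument.
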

\begin{proof}
	According to the proof of Proposition \ref{gh}, it is enough to check that $\chi_{\psi}(\phi(f),g)=\chi_{\psi}(f,\phi(g))$. For this, we have
	$$\chi_{\psi}(\phi(f),g)=\chi(\psi\circ\phi(f),g)=\chi(\phi\circ\psi(f),g)=\chi(\psi(f),\phi(g))=\chi_{\psi}(f,\phi(g)).$$
\end{proof}
\begin{definition}
	Let $g$ be a $G$-graded vector space, $\mu : g\times g\times g\longrightarrow g$ be an even 3-linear map (i.e. $\mu(g_a,g_b,g_c)\subset g_{a+b+c}$) and $\alpha:g\longrightarrow g$ be an even homomorphism such that for elements $a,b,c,d,e\in g$
	$$\mu(\alpha(a),\alpha(b), \mu(c,d,e)) = \mu(\mu(a,b,c), \alpha(d),\alpha(e)),$$
	then $(g, \mu, \alpha)$ is called a 3-Hom-associative algebra.
	If the bilinear map $\mu$ is the $\rho$-symmetry with respect to the displacement of every two elements, the 3-Hom-associative algebra $(g, \mu,\alpha)$ is called commutative. We denote this algebra by $g_{\rho,\phi,C}$.
\end{definition}
\begin{definition}
 A quadratic structure on 3-Hom-associative algebra $g$ is a $\rho$-symmetric, invariant and non-degenerate bilinear form $\chi_g$ on $g$ such that $\alpha$ is $\chi_g$-symmetric. Let us denoted by $g_{\rho,\chi}$ a quadratic 3-Hom-associative algebra.
\end{definition}
\begin{theorem}
Consider $g_{\rho,\phi,\chi,C}$ and  $\mathcal{B}_{\rho,\phi,\chi}$. Then the tensor product $E=\mathcal{B}\otimes g$ equipped with the structures
\begin{align*}
[f\otimes a, g\otimes b, h\otimes c]_E &= \rho(a,g+h)\rho(b,h)[f,g,h]_\mathcal{B}\otimes\mu(a, b,c),\\
\chi_E(f\otimes a, g\otimes b) &=\rho(a,g)\chi_A(f,g)\chi_g(a,b),\\
\phi_E(f\otimes a, g\otimes b)&=\phi(f)\otimes\alpha(a),\quad \forall f,g,h\in Hg(\mathcal{B}), a, b,c \in Hg(g),
\end{align*}
is a quadratic 3-Hom-$\rho$-Lie algebra.
\end{theorem}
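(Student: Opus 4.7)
The plan is to follow the same general pattern as the proof of Theorem \ref{a1} and extend it by verifying the additional Hom-compatibility conditions required here. The proof will consist of three verifications: (i) the bracket $[\cdot,\cdot,\cdot]_E$ is a 3-Hom-$\rho$-Lie bracket on $E$, (ii) the bilinear form $\chi_E$ is $\rho$-symmetric, non-degenerate and invariant, and (iii) the twist map $\phi_E$ is $\chi_E$-symmetric. Since $\mathcal{B}_{\rho,\phi,\chi}$ is a quadratic 3-Hom-$\rho$-Lie algebra and $g_{\rho,\phi,\chi,C}$ is a commutative quadratic 3-Hom-associative algebra, I would expect all three verifications to reduce to combinations of identities already established componentwise.

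First, I would check the $\rho$-skew-symmetry of $[\cdot,\cdot,\cdot]_E$ under the exchange of any two tensor factors, reproducing verbatim the corresponding step in Theorem \ref{a1}: the prefactors $\rho(a,g+h)\rho(b,h)$ are precisely the color signs needed so that the $\rho$-skew-symmetry of $[\cdot,\cdot,\cdot]_\mathcal{B}$ combined with the full $\rho$-symmetry of $\mu$ (by commutativity of $g$) yields $\rho$-skew-symmetry of $[\cdot,\cdot,\cdot]_E$. Then I would verify the Hom $\rho$-fundamental identity: expand each of the four terms as in Theorem \ref{a1}, observe that every term carries the same global $\rho$-factor and a common tensor factor $\mu(\alpha(a),\alpha(b),\mu(c,d,e))$ thanks to commutativity of $\mu$ and 3-Hom-associativity $\mu(\alpha(a),\alpha(b),\mu(c,d,e))=\mu(\mu(a,b,c),\alpha(d),\alpha(e))$, and conclude by invoking the Hom $\rho$-fundamental identity for $\mathcal{B}$ applied to $\phi(f_i)$ and $\phi(g_j)$.

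Next I would verify the quadratic structure on $E$. Non-degeneracy of $\chi_E$ follows from non-degeneracy of $\chi$ and $\chi_g$ together with the invertibility of the color prefactor. The $\rho$-symmetry, $\chi_E(f\otimes a,g\otimes b)=\rho(f\otimes a,g\otimes b)\chi_E(g\otimes b,f\otimes a)$, reduces to a direct manipulation of the two-cycle using the $\rho$-symmetry of $\chi$ and $\chi_g$. Invariance is verified by the same calculation as at the end of the proof of Theorem \ref{a1}, now invoking the 3-Hom-invariance of $\chi$ (which gives $\chi([f,g,h]_\mathcal{B},h_1)=\chi(f,[g,h,h_1]_\mathcal{B})$) and the corresponding invariance of $\chi_g$. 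Finally, $\phi_E$ is $\chi_E$-symmetric because
\[
\chi_E(\phi_E(f\otimes a),g\otimes b)=\rho(a,g)\chi(\phi(f),g)\chi_g(\alpha(a),b)=\rho(a,g)\chi(f,\phi(g))\chi_g(a,\alpha(b))=\chi_E(f\otimes a,\phi_E(g\otimes b)),
\]
using the $\chi$-symmetry of $\phi$ and the $\chi_g$-symmetry of $\alpha$.

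The main obstacle, and the only non-routine step, is the verification of the Hom $\rho$-fundamental identity, where one must track five layers of $\rho$-factors coming from (a) the prefactors in the bracket on $E$, (b) the $\rho$-twists in the Hom fundamental identity for $\mathcal{B}$, (c) the color shifts introduced by $\phi$-applications, and (d) the rearrangement of $\mu$-factors allowed by commutativity. The key simplifying observation is that once one writes each of the four terms of the Hom fundamental identity for $E$ in factored form, the $\mu$-part collapses to a single expression by commutativity and 3-Hom-associativity, leaving an identity in the $\mathcal{B}$-factor which is exactly the Hom $\rho$-fundamental identity for $\mathcal{B}$ applied to $\phi(f_1),\phi(f_2)$ and the $g_i$'s. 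Once the bookkeeping of $\rho$-factors is handled carefully, the remaining steps are direct computations analogous to those in Proposition \ref{as} and Theorem \ref{a1}.
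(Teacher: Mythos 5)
Your proposal is correct and follows essentially the same route as the paper: the paper likewise reduces the bracket, $\rho$-symmetry, non-degeneracy and invariance verifications to the proof of Theorem \ref{a1}, and the only computation it writes out explicitly is the $\chi_E$-symmetry of $\phi_E$, which you reproduce verbatim (using that $\alpha$ is even so the color prefactor $\rho(a,g)$ is unchanged). Your treatment is simply more detailed about the bookkeeping that the paper leaves implicit.
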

\begin{proof}
By the proof of Theorem \ref{a1}, we can easily show that $(E, [.,.,.]_E, \rho,\phi_E)$
is a 3-Hom-$\rho$-Lie algebra and $\chi_E$ is $\rho$-symmetric and invariant. It is enough to check that $\phi_E$ is $\chi_E$-symmetric, which completes the proof. For this, we have
\begin{align*}
\chi_E(\phi_E(f\otimes a), g\otimes b)&=\chi_E(\phi(f)\otimes\alpha(a), g\otimes b)=\rho(a,g)\chi(\phi(f),g)\chi_g(\alpha(a), b)\\
&=\rho(a,g)\chi(f,\phi(g))\chi_g(a, \alpha(b))=\chi_E(f\otimes a,\phi_E(g\otimes b)).
\end{align*}
\end{proof}
\textbf{Ideals of 3-Hom-$\rho$-Lie algebras:}
In this part, we introduce the definitions of subalgebra and ideal of $\mathcal{B}_{\rho,\phi}$ and give some properties related to ideals of $\mathcal{B}_{\rho,\phi}$.
\begin{definition}
	A Hom-subalgebra of $\mathcal{B}_{\rho,\phi}$ is defined as a sub-vector space $I\subseteq \mathcal{B}_{\rho,\phi}$ with the property $[I,I,I]_\mathcal{B}\subseteq I$ and $\phi(I)\subseteq I$. $I$ also is called a Hom ideal of $\mathcal{B}$ if $\phi(I)\subseteq I$ and $[I,\mathcal{B}_{\rho,\phi},\mathcal{B}_{\rho,\phi}]_\mathcal{B}\subseteq I$.
\end{definition}
\begin{lemma}
	$Z(\mathcal{B}_{\rho,\phi})$, the center of multiplicative quadratic 3-Hom-$\rho$-Lie algebra $\mathcal{B}_{\rho,\phi,\chi}$, is an Hom-ideal of $\mathcal{B}_{\rho,\phi}$.
\end{lemma}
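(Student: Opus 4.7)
The statement has two parts to verify: the bracket condition $[Z(\mathcal{B}_{\rho,\phi}),\mathcal{B}_{\rho,\phi},\mathcal{B}_{\rho,\phi}]_{\mathcal{B}}\subseteq Z(\mathcal{B}_{\rho,\phi})$ and the stability condition $\phi(Z(\mathcal{B}_{\rho,\phi}))\subseteq Z(\mathcal{B}_{\rho,\phi})$. The first is immediate from the very definition of the center: for any $f\in Z(\mathcal{B}_{\rho,\phi})$ one has $[f,g,h]_{\mathcal{B}}=0$, so $[Z(\mathcal{B}_{\rho,\phi}),\mathcal{B}_{\rho,\phi},\mathcal{B}_{\rho,\phi}]_{\mathcal{B}}=\{0\}$, which is trivially contained in $Z(\mathcal{B}_{\rho,\phi})$, just as in the 3-$\rho$-Lie case treated earlier in the paper.

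The whole work therefore concentrates on the second condition. My plan is to show that for any homogeneous $f\in Z(\mathcal{B}_{\rho,\phi})$ and arbitrary homogeneous $g,h\in \mathcal{B}$, the bracket $[\phi(f),g,h]_{\mathcal{B}}$ is orthogonal under $\chi$ to every $s\in \mathcal{B}$, and then to invoke non-degeneracy of $\chi$ to conclude $[\phi(f),g,h]_{\mathcal{B}}=0$. Concretely, the chain of identities I would chase is
\begin{align*}
\chi([\phi(f),g,h]_{\mathcal{B}},s)
&=\chi(\phi(f),[g,h,s]_{\mathcal{B}})\quad(\chi\text{ is invariant})\\
&=\chi(f,\phi([g,h,s]_{\mathcal{B}}))\quad(\phi\text{ is }\chi\text{-symmetric})\\
&=\chi(f,[\phi(g),\phi(h),\phi(s)]_{\mathcal{B}})\quad(\phi\text{ is multiplicative})\\
&=\chi([f,\phi(g),\phi(h)]_{\mathcal{B}},\phi(s))\quad(\chi\text{ is invariant})\\
&=0\quad(f\in Z(\mathcal{B}_{\rho,\phi})).
\end{align*}
Since this holds for every $s$ and $\chi$ is non-degenerate, $[\phi(f),g,h]_{\mathcal{B}}=0$, so $\phi(f)\in Z(\mathcal{B}_{\rho,\phi})$; extending by linearity gives $\phi(Z(\mathcal{B}_{\rho,\phi}))\subseteq Z(\mathcal{B}_{\rho,\phi})$.

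The only subtlety I would flag is that the argument uses, in an essential way, \emph{all four} pieces of structure at once: the invariance of $\chi$ (twice), the compatibility $\chi(\phi(\cdot),\cdot)=\chi(\cdot,\phi(\cdot))$ coming from the quadratic structure on $\mathcal{B}_{\rho,\phi,\chi}$, the multiplicativity hypothesis $\phi[g,h,s]_{\mathcal{B}}=[\phi(g),\phi(h),\phi(s)]_{\mathcal{B}}$, and the non-degeneracy of $\chi$. Without multiplicativity one cannot push $\phi$ through the bracket on the right-hand side, and without $\chi$-symmetry of $\phi$ one cannot transfer it back onto $f$; so the statement genuinely needs all the standing hypotheses. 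The $\rho$-signs never have to be tracked explicitly because the argument only uses scalar identities $\chi(\cdot,\cdot)=0$, so no factor of $\rho$ ever appears.
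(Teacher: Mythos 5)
Your proof is correct and follows exactly the same route as the paper's: the bracket condition is dismissed as immediate, and $\phi$-stability of the center is obtained by the identical chain $\chi([\phi(f),g,h]_{\mathcal{B}},s)=\chi(\phi(f),[g,h,s]_{\mathcal{B}})=\chi(f,\phi[g,h,s]_{\mathcal{B}})=\chi(f,[\phi(g),\phi(h),\phi(s)]_{\mathcal{B}})=\chi([f,\phi(g),\phi(h)]_{\mathcal{B}},\phi(s))=0$, followed by non-degeneracy of $\chi$. Your closing remark correctly identifies why multiplicativity and the $\chi$-symmetry of $\phi$ are both indispensable, which the paper uses but does not comment on.
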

\begin{proof}
	It is easy to see that $[Z(\mathcal{B}_{\rho,\phi}),\mathcal{B}_{\rho,\phi},\mathcal{B}_{\rho,\phi}]\subseteq \mathcal{B}_{\rho,\phi}$. We assume that $f\in Z(\mathcal{B}_{\rho,\phi})$ and $g,h,m\in \mathcal{B}$, so
	\begin{align*}
	\chi([\phi(f), g, h]_\mathcal{B}, m)&=\chi(\phi(f), [g,h,m]_\mathcal{B})=\chi(f,\phi[g,h,m]_\mathcal{B})\\
	&=\chi(f,[\phi(g),\phi(h), \phi(m)]_\mathcal{B})
	=\chi([f,\phi(g),\phi(h)]_\mathcal{B}, \phi(m))=0.
	\end{align*}
	The last equality holds, since $f\in Z(\mathcal{B}_{\rho,\phi})$. On the other hand, since $\chi$ is non-degenerate, thus the above relation gives $[\phi(f), g, h]_\mathcal{B}$ for any $g,h\in \mathcal{B}$. This implies that $\phi(f)\in Z(\mathcal{B}_{\rho,\phi})$ and therefore $Z(\mathcal{B}_{\rho,\phi})$ is an Hom-ideal of $\mathcal{B}_{\rho,\phi}$.
\end{proof}
\begin{lemma}
	The orthogonal $I^{\perp}$ of $I$ ($I$ is a Hom-ideal of  $\mathcal{B}_{\rho,\phi,\chi}$) with respect to $\chi$ is a Hom-ideal of $\mathcal{B}_{\rho,\phi}$.
\end{lemma}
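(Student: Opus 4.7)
The plan is to verify the two defining properties of a Hom-ideal for $I^{\perp}$: namely $\phi(I^{\perp})\subseteq I^{\perp}$ and $[I^{\perp},\mathcal{B}_{\rho,\phi},\mathcal{B}_{\rho,\phi}]_{\mathcal{B}}\subseteq I^{\perp}$. The argument for the bracket closure mirrors the analogous statement in Section 2 for 3-$\rho$-Lie algebras, while the $\phi$-stability is the only genuinely new ingredient and reduces to the $\chi$-symmetry of $\phi$ together with the hypothesis that $I$ itself is $\phi$-stable.

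First, I would handle the $\phi$-stability of $I^{\perp}$. Take a homogeneous $f\in I^{\perp}$ and any $s\in I$. Because $\phi$ is a $\chi$-symmetric endomorphism (this is built into the definition of a quadratic 3-Hom-$\rho$-Lie algebra, condition (iii)), we have
\[
\chi(\phi(f),s)=\chi(f,\phi(s)).
\]
Since $I$ is a Hom-ideal, $\phi(s)\in I$, and therefore $\chi(f,\phi(s))=0$ by the definition of $I^{\perp}$. Hence $\phi(f)\in I^{\perp}$, giving $\phi(I^{\perp})\subseteq I^{\perp}$.

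Next I would show closure under the bracket, which is identical in spirit to the earlier lemma for $\mathcal{B}_{\rho,\chi}$. For $f\in I^{\perp}$, $g,h\in Hg(\mathcal{B}_{\rho,\phi})$ and $s\in I$, invariance of $\chi$ gives
\[
\chi([f,g,h]_{\mathcal{B}},s)=\chi(f,[g,h,s]_{\mathcal{B}}),
\]
and the $\rho$-skew-symmetry of the ternary bracket then allows one to rewrite $[g,h,s]_{\mathcal{B}}$ as $\rho(g+h,s)[s,g,h]_{\mathcal{B}}$ up to a nonzero scalar. Since $I$ is a Hom-ideal, $[s,g,h]_{\mathcal{B}}\in[I,\mathcal{B}_{\rho,\phi},\mathcal{B}_{\rho,\phi}]_{\mathcal{B}}\subseteq I$, so $\chi(f,[g,h,s]_{\mathcal{B}})=0$. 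Therefore $[f,g,h]_{\mathcal{B}}\in I^{\perp}$, and by bilinearity this extends from the homogeneous case to all of $\mathcal{B}$.

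There is really no main obstacle here; the proof is bookkeeping. The only subtlety worth flagging is that the $\chi$-symmetry axiom for $\phi$ is precisely what is needed to close $I^{\perp}$ under $\phi$, so this lemma genuinely uses condition (iii) of the definition of a quadratic 3-Hom-$\rho$-Lie algebra, whereas the bracket-closure half requires only the $\rho$-symmetry and invariance of $\chi$ inherited from the non-Hom setting.
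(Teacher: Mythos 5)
Your proof is correct and follows essentially the same route as the paper: the bracket closure is the same invariance-plus-skew-symmetry argument as the earlier lemma for $\mathcal{B}_{\rho,\chi}$ (which the paper simply declares ``clear''), and the $\phi$-stability of $I^{\perp}$ is obtained exactly as in the paper from the $\chi$-symmetry of $\phi$ together with $\phi(I)\subseteq I$. Your write-up is in fact more explicit than the paper's.
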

\begin{proof}
	It is clear that $[I^{\perp}, \mathcal{B}_{\rho,\phi}, \mathcal{B}_{\rho,\phi}]\subseteq I^{\perp}$. Also, for $f\in I^{\perp}$, $g\in I$, we have $B(\phi(f), g)=B(f,\phi(g))=0$, since $\phi(I)\subseteq I$ and $f\in I^{\perp}$.
\end{proof}
\textbf{Symplectic structure of 3-Hom-$\rho$-Lie algebras:}
This part is devoted to study of symplectic structure, metrics and their properties. Also, we show that a symplectic structure $\omega$ may be defined on $\mathcal{B}_{\rho,\phi}$ if and only if there exists an invertible derivation of $\mathcal{B}_{\rho,\phi}$ that is antisymmetric with respect to metric.
\begin{definition}\label{a20}
	If $\omega\in\wedge^2\mathcal{B}^*$ is a non-degenerate and $\rho$-skew-symmetric bilinear form
	such that 
	\begin{align*}
	&\omega([f_1, f_2, f_3]_\mathcal{B}, \phi(f_4))-\rho(f_1, f_2+f_3+f_4)\omega([f_2, f_3, f_4]_\mathcal{B}, \phi(f_1))\\
	&\ \ \ +\rho(f_1+f_2,f_3+f_4)\omega([f_3, f_4, f_1]_\mathcal{B}, \phi(f_2))-\rho(f_1+f_2+f_3, f_4)\omega([f_4, f_1, f_2]_\mathcal{B},\phi(f_3))=0,
	\end{align*}
	then $\omega$ is called a symplectic structure and $(\mathcal{B}, [.,.,.]_\mathcal{B},\rho,\phi,\omega)$ is said to be a symplectic 3-Hom-$\rho$-Lie algebra.
\end{definition}
We will say that $(\mathcal{B},[.,.,.]_\mathcal{B},\rho,\phi, \chi, \omega)$ is a quadratic symplectic 3-Hom-$\rho$-Lie algebra if $(\mathcal{B},\chi)$ is quadratic 
and $(\mathcal{B},\omega)$ is symplectic.
\begin{example}
	Let $(\mathcal{B}, [.,.,.]_\mathcal{B},\rho,\omega)$ be the $4$-dimensional symplectic 3-$\rho$-Lie algebra provided in Example \ref{Ex.123}. We consider the multiplex $(\mathcal{B},[.,.,.]_{\phi},\rho,\phi)$ defining a 3-Hom-$\rho$-Lie algebra, according to Theorem \ref{Th.124}, such that the bracket $[.,.,.]_{\phi}$ and the even linear map $\phi$ are defined by
	\begin{align*}
	&[l_1,l_2,l_3]_{\phi}=l_4,\quad \phi(l_4)=l_4,\\
	&[l_1,l_2,l_4]_{\phi}=l_3,\quad \phi(l_3)=-l_3,\\
	&[l_1,l_3,l_4]_{\phi}=l_2,\quad \phi(l_2)=-l_2,\\
	&[l_2,l_3,l_4]_{\phi}=l_1,\quad \phi(l_1)=-l_1.
	\end{align*}
	Moreover, $(\mathcal{B},[.,.,.]_{\phi},\rho,\phi)$ together with the bilinear map $\omega$ is given in Example \ref{Ex.123} is a $4$-dimensional symplectic 3-Hom-$\rho$-Lie algebra.
\end{example}
In the following, we define a derivation of $\mathcal{B}_{\rho,\phi}$ and examine the relationship between symplectic structures and $\varphi$-symmmetric invertible derivations.
\begin{definition}
A linear map $D$ on $\mathcal{B}_{\rho,\phi}$ satisfying  
\begin{align*}
&D\circ \phi=\phi\circ D,\\
&D[f,g,h]_\mathcal{B}= [D(f), g, h] +\rho(D, f) [f, D(g), h] + \rho(D, f+g)[f,g,D(h)],
\end{align*}
is called a derivation of $\mathcal{B}_{\rho,\phi}$. We denote by $Der(\mathcal{B}_{\rho,\phi})$ the space of all derivations of $\mathcal{B}_{\rho,\phi}$.
\end{definition}
\begin{definition}
$(\mathcal{B}, [.,.,.]_\mathcal{B},\rho,\phi,\varphi,\mathbb{K})$ is called a metric 3-Hom-$\rho$-Lie algebra, if
\begin{enumerate}
\item[i.]
$\mathbb{K}$ is a field $(\mathbb{C}$ or $\mathbb{R})$,
\item[ii.]
$(\mathcal{B}, [.,.,.]_\mathcal{B},\rho,\phi)$ is a 3-Hom-$\rho$-Lie algebra,
\item[iii.]
 $\varphi:\mathcal{B}\times \mathcal{B}\longrightarrow \mathbb{K}$ is a
non-degenerate $\rho$-symmetric bilinear form on $\mathcal{B}$ satisfying the following condition
\begin{align*}
&\varphi([f,g,h]_{\mathcal{B}},\phi(z)) +\rho(f+g,h) \varphi(\phi(h), [f,g,z]_{\mathcal{B}}) = 0,\quad\forall f,g,h,z\in Hg(\mathcal{B}).
\end{align*}
\end{enumerate} 
Let us denote by ${\mathcal{B}}_{\rho,\phi,\varphi}$ the metric 3-Hom-$\rho$-Lie algebra $(\mathcal{B}, [.,.,.]_\mathcal{B},\rho,\phi,\varphi,\mathbb{K})$.
\end{definition}
\begin{proposition}
 There exists a symplectic structure on $\mathcal{B}_{\rho,\phi}$ if and only if there exists an invertible derivation $D\in {\rm Der_{\varphi}(\mathcal{B}_{\rho,\phi})}$.
\end{proposition}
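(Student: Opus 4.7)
The plan is to mirror the proof given earlier in the non-Hom case, with the modifications forced by the presence of the twisting map $\phi$. For the implication ($\Leftarrow$), assume an invertible derivation $D\in {\rm Der_{\varphi}(\mathcal{B}_{\rho,\phi})}$ is given, and define the bilinear form
$$\omega(f,g)=\varphi(D(f),g), \quad f,g\in \mathcal{B}_{\rho,\phi}.$$
First I would verify that $\omega$ is non-degenerate (immediate from invertibility of $D$ and non-degeneracy of $\varphi$) and $\rho$-skew-symmetric (combining the $\rho$-symmetry of $\varphi$ with the $\varphi$-antisymmetry of $D$). The non-trivial part is verifying the symplectic identity of Definition \ref{a20}.

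To do so, I would rewrite each of the four terms in the symplectic identity as $\varphi(D[f_i,f_j,f_k]_\mathcal{B},\phi(f_\ell))$, then expand $D[f_i,f_j,f_k]_\mathcal{B}$ via the derivation property into three summands. Because $D\circ\phi=\phi\circ D$, one can move $\phi$ past $D$ freely, and each resulting summand has the form $\varphi([\,\cdot,\cdot,Df_r]_\mathcal{B},\phi(f_\ell))$ (or a permutation thereof). The invariance property of the metric, $\varphi([f,g,h]_\mathcal{B},\phi(z))+\rho(f+g,h)\varphi(\phi(h),[f,g,z]_\mathcal{B})=0$, then lets me transport the element $Df_r$ out of the bracket; after doing this systematically in all twelve summands, the $\rho$-fundamental identity for $[\cdot,\cdot,\cdot]_\mathcal{B}$ yields the required cancellation. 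This is exactly the mechanism used in the non-Hom version of the proposition, adapted by inserting $\phi$ on the appropriate argument at each metric invariance step.

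For the converse ($\Rightarrow$), given a symplectic structure $\omega$ and the metric $\varphi$, non-degeneracy of $\varphi$ allows us to define a linear map $D:\mathcal{B}\longrightarrow \mathcal{B}$ by $\omega(f,g)=\varphi(D(f),g)$. Non-degeneracy of $\omega$ forces $D$ to be invertible, and comparing the $\rho$-skew-symmetry of $\omega$ with the $\rho$-symmetry of $\varphi$ yields the $\varphi$-antisymmetry of $D$. The commutation $D\circ\phi=\phi\circ\phi\circ D$, i.e. $D\phi=\phi D$, should follow from the compatibility of $\omega$ with $\phi$ (implicit in the Hom symplectic identity, where $\phi$ appears in the second slot) combined with $\varphi$-symmetry of $\phi$, again using non-degeneracy of $\varphi$. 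Finally, the derivation identity for $D$ is obtained by reading the forward computation in reverse: expand the symplectic identity applied to general $f_1,f_2,f_3,f_4$, substitute $\omega(\cdot,\cdot)=\varphi(D(\cdot),\cdot)$, and apply the metric invariance to conclude that $D[f_1,f_2,f_3]_\mathcal{B}-[D(f_1),f_2,f_3]_\mathcal{B}-\rho(D,f_1)[f_1,D(f_2),f_3]_\mathcal{B}-\rho(D,f_1+f_2)[f_1,f_2,D(f_3)]_\mathcal{B}$ pairs trivially against every element, hence vanishes.

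The main obstacle is the combinatorial bookkeeping of the $\rho$-cocycle factors: every transposition of elements through $\varphi$, $D$, or the bracket contributes a factor, and the symplectic identity already carries four such factors. The risk is that a miscount produces a sign error that masks the cancellation. To control this, I would proceed term by term rather than all at once, tracking the $G$-degrees of $D$, $\phi$, and each $f_i$ separately, and appealing uniformly to the identity $\rho(a,b)\rho(b,a)=1$ whenever adjacent factors need to be combined.
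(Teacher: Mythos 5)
Your proposal follows exactly the route the paper takes: the paper's entire proof is the one line ``it is enough to define $\omega(f,g)=\varphi(D(f),g)$,'' deferring to the computation already carried out for the non-Hom version, and your plan is precisely that construction with the $\phi$-insertions dictated by the Hom metric invariance and the requirement $D\circ\phi=\phi\circ D$ from the definition of a derivation of $\mathcal{B}_{\rho,\phi}$. Your write-up supplies considerably more detail than the paper does, but the underlying argument is the same.
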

\begin{proof}
	It is enough to define $\omega(f,g)=\varphi(D(f),g)$.
\end{proof}
 For any symmetric bilinear form $\chi^{\prime}$ on $\mathcal{B}_{\rho,\phi,\chi}$, there is an associated map $D:\mathcal{B}\longrightarrow \mathcal{B}$ satisfying
\begin{equation}\label{qq}
\chi^{\prime}(f,g) = \chi(D(f),g),\quad \forall f,g\in Hg(\mathcal{B}).
\end{equation}
Since $\chi$ and $\chi^{\prime}$ are symmetric, then $D$ is symmetric with respect to $\chi$, i.e., $$\chi(D(f),g) =\rho(D,f)\chi(f,D(g)),\quad \forall f,g\in Hg(\mathcal{B}).$$
\begin{lemma}
	Let $(\mathcal{B},[.,.,.]_\mathcal{B},\rho,\phi,\chi)$ be a quadratic 3-Hom-$\rho$-Lie algebra and $\chi^{\prime}$ defined by \eqref{qq}. Then 
		$\chi^{\prime}$ is invariant if and only if
		\begin{equation}
		D([f,g,h]_\mathcal{B}) = [D(f),g,h]_\mathcal{B} = \rho(D,f)[f,D(g),h]_\mathcal{B}=\rho(D,f+g)[f,g,D(h)]_\mathcal{B}.
		\end{equation}
Also, $\chi^{\prime}$ is non-degenerate if and only if $D$ is invertible and $\phi$ is $\chi^{\prime}$-symmetric if and only if $D\circ\phi =\phi\circ D$.
\end{lemma}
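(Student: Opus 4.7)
The plan is to mimic the proof of Lemma \ref{lem 25} almost verbatim, exploiting non-degeneracy of $\chi$ to transfer identities about $\chi'$ into pointwise identities about $D$, and then adding one new short step for the $\phi$-symmetry clause (which has no analogue in the classical case).

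For the invariance part, I would start from $\chi'([f,g,h]_\mathcal{B},s)=\chi(D[f,g,h]_\mathcal{B},s)$ and $\chi'(f,[g,h,s]_\mathcal{B})=\chi(D(f),[g,h,s]_\mathcal{B})$, then use invariance of $\chi$ to rewrite the right-hand side of the second expression as $\chi([D(f),g,h]_\mathcal{B},s)$. The statement $\chi'([f,g,h]_\mathcal{B},s)=\chi'(f,[g,h,s]_\mathcal{B})$ for all $s$ then becomes $\chi(D[f,g,h]_\mathcal{B}-[D(f),g,h]_\mathcal{B},s)=0$ for all $s$, and non-degeneracy of $\chi$ forces $D[f,g,h]_\mathcal{B}=[D(f),g,h]_\mathcal{B}$. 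The remaining two equalities in the conclusion follow from this one by applying the $\rho$-skew-symmetry of $[\cdot,\cdot,\cdot]_\mathcal{B}$ to swap the distinguished slot, picking up the appropriate two-cycle factor; this produces exactly the $\rho(D,f)$ and $\rho(D,f+g)$ prefactors stated in the lemma. The converse direction is the same chain of equalities read in reverse.

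For the non-degeneracy part, I would argue in both directions as in Lemma \ref{lem 25}: if $\chi'$ is non-degenerate and $D(f)=0$, then $\chi'(f,\mathcal{B})=\chi(D(f),\mathcal{B})=0$, hence $f=0$, so $D$ is injective (and on a finite-dimensional graded space, invertible); conversely if $D$ is invertible and $\chi'(f,\mathcal{B})=0$, then $\chi(D(f),\mathcal{B})=0$, so $D(f)=0$ by non-degeneracy of $\chi$, and hence $f=0$.

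For the new clause concerning $\phi$, the observation is that $\phi$ is part of the quadratic structure on $\mathcal{B}_{\rho,\phi,\chi}$, so $\chi(\phi(f),g)=\chi(f,\phi(g))$ by definition. Unwinding, $\phi$ is $\chi'$-symmetric iff $\chi(D\phi(f),g)=\chi'(\phi(f),g)=\chi'(f,\phi(g))=\chi(D(f),\phi(g))$, and using $\chi$-symmetry of $\phi$ the right-hand side equals $\chi(\phi D(f),g)$. Thus $\chi(D\phi(f)-\phi D(f),g)=0$ for every $g\in\mathcal{B}$, and non-degeneracy of $\chi$ yields $D\circ\phi=\phi\circ D$; the converse is immediate from the same equalities.

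None of these steps is really an obstacle; the only mild care needed is keeping track of the sign/two-cycle conventions when passing the map $D$ through the bracket via $\rho$-skew-symmetry, which is already laid out in the earlier lemma and can be applied unchanged here.
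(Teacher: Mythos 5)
Your proposal is correct and follows essentially the same route as the paper: the invariance and non-degeneracy clauses are handled exactly as in Lemma \ref{lem 25}, and the only new content is the check that $\phi$ is $\chi'$-symmetric precisely when $D\circ\phi=\phi\circ D$, which you carry out via the chain $\chi'(\phi(f),g)=\chi(D\phi(f),g)$ and $\chi'(f,\phi(g))=\chi(\phi D(f),g)$ together with non-degeneracy of $\chi$. If anything, your treatment is slightly more complete than the paper's, which only writes out the direction assuming $D\circ\phi=\phi\circ D$ and leaves the converse (which needs non-degeneracy of $\chi$, as you note) implicit.
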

\begin{proof}
By the proof of Lemma \ref{lem 25}, it is enough to show that $\chi^{\prime}(\phi(f),g)=\chi^{\prime}(f,\phi(g))$. For this, we have
$$\chi^{\prime}(\phi(f),g)=\chi(D\circ\phi(f),g)=\chi(\phi\circ D(f),g)=\chi(D(f),\phi(g))=\chi^{\prime}(f,\phi(g)).$$
\end{proof}
\begin{definition}
	A $\rho$-symmetric map $D:\mathcal{B}_{\rho,\phi}\longrightarrow \mathcal{B}_{\rho,\phi}$ satisfying \eqref{eq.12} is called a centromorphism of $\mathcal{B}_{\{\},\phi}$. We denote by $\mathscr{C}(\mathcal{B}_{\rho,\phi})$ the space of all centromorphisms of $\mathcal{B}_{\rho,\phi}$.
\end{definition}
\begin{proposition}
	Let $\delta\in {\rm Der_\chi(\mathcal{B}_{\rho,\phi})}$ and $D\in\mathscr{C}(\mathcal{B}_{\rho,\phi})$ such that $D\circ \delta=\delta\circ D$ and $D\circ \phi=\phi\circ D$ . Then $D\circ \delta$ is also a $\chi$-antisymmetric derivation of $\mathcal{B}_{\rho,\phi}$.
\end{proposition}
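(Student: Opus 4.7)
The plan is to mimic the proof of Proposition \ref{pro 26} (the non-Hom case) and just add the single extra check that $D\circ\delta$ commutes with $\phi$, which is why the hypothesis $D\circ\phi=\phi\circ D$ has been added. There are three things to verify: (a) $(D\circ\delta)\circ\phi=\phi\circ(D\circ\delta)$, (b) $D\circ\delta$ satisfies the Hom-derivation identity, and (c) $D\circ\delta$ is $\chi$-antisymmetric.

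For (a), I would combine $D\circ\phi=\phi\circ D$ (hypothesis) with $\delta\circ\phi=\phi\circ\delta$ (which holds because $\delta$ is a derivation of $\mathcal{B}_{\rho,\phi}$). Then $(D\circ\delta)\circ\phi = D\circ\phi\circ\delta = \phi\circ D\circ\delta$. For (b), I would first apply the derivation rule for $\delta$ to $\delta[f,g,h]_{\mathcal{B}}$, producing three terms of the form $[\delta(f),g,h]_{\mathcal{B}}+\rho(\delta,f)[f,\delta(g),h]_{\mathcal{B}}+\rho(\delta,f+g)[f,g,\delta(h)]_{\mathcal{B}}$; then apply $D$ and use the centromorphism relation \eqref{eq.12} to move $D$ into each slot independently. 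The $\rho$-degree bookkeeping is exactly as in Proposition \ref{pro 26}, with the extra scalars coming out the same on both sides because $D$ is even on the relevant arguments (a centromorphism is a $\rho$-symmetric map) and because $D\circ\delta$ acts on the bracket via each slot separately.

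For (c), using that $D$ is $\chi$-symmetric (definition of centromorphism), $\delta$ is $\chi$-antisymmetric, and $D\circ\delta=\delta\circ D$, I compute
\begin{align*}
\chi(D\circ\delta(f),g) &= \rho(D,\delta(f))\,\chi(\delta(f),D(g))\\
&= -\rho(D,\delta(f))\rho(\delta,f)\,\chi(f,\delta\circ D(g))\\
&= -\rho(D+\delta,f)\,\chi(f,D\circ\delta(g)),
\end{align*}
so $D\circ\delta\in\mathrm{End}_\chi(\mathcal{B}_{\rho,\phi})$, and combined with (a) and (b) this places $D\circ\delta\in\mathrm{Der}_\chi(\mathcal{B}_{\rho,\phi})$.

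The only potential obstacle is purely bookkeeping: checking that the two-cycle factors $\rho(D,\cdot)$ and $\rho(\delta,\cdot)$ telescope correctly via the cocycle identity $\rho(a+b,c)=\rho(a,c)\rho(b,c)$ when one passes from $\rho(D,f)\rho(\delta,f)$ to $\rho(D+\delta,f)$ in each of the three derivation terms. Since $D$ and $\delta$ act homogeneously, this is a routine application of the defining properties of $\rho$, and so I do not expect any conceptual difficulty; the argument is essentially the non-Hom proof augmented with the commutation $D\circ\phi=\phi\circ D$.
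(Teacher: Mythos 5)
Your proposal is correct and follows essentially the same route as the paper: the paper's proof simply invokes Proposition \ref{pro 26} for the derivation and $\chi$-antisymmetry properties and then verifies $(D\circ\delta)\circ\phi=\phi\circ(D\circ\delta)$ via $D\circ\phi\circ\delta=\phi\circ D\circ\delta$, exactly as in your part (a). Your parts (b) and (c) just spell out the content of Proposition \ref{pro 26} that the paper cites without repeating.
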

\begin{proof}
By the proof of Proposition \ref{pro 26}, it is enough to show that $(D\circ \delta)\circ\phi =\phi\circ (D\circ \delta)$. Since $\delta\circ\phi =\phi\circ\delta$, we have
$$(D\circ \delta)\circ\phi=D\circ (\delta\circ\phi)=D\circ(\phi\circ\delta )=(D\circ\phi)\circ\delta=(\phi\circ D)\circ\delta=\phi\circ (D\circ\delta).$$
\end{proof}
%-----------------------------------------------------------------------------------------------
%--------------------------------------------------------------------------------------------------------------------------------------------------------------------------
\subsection{Representation of 3-Hom-$\rho$-Lie algebras}
The representation theory of 3-Hom-$\rho$-Lie algebras is studied in \cite{PZCG}. In this part, we recall main definition of representation and some properties, then we add a brief details which are not discussed in that article.
 
 Consider the multiplicative 3-Hom-$\rho$-Lie algebra $\mathcal{B}_{\rho,\phi}$. The fundamental set $\mathcal{L}=\wedge^2\mathcal{B}_{\rho,\phi}$ together with operation
\begin{equation}
[(f_1,f_2), (g_1,g_2)]_{\mathcal{L}}=([f_1, f_2, g_1],\phi(g_2))+\rho(f_1+f_2,g_1)(\phi(g_1),[f_1,f_2,g_2]),
\end{equation}
and the even linear map $\phi_1:\mathcal{L}\longrightarrow \mathcal{L}$ defined by $\phi_1(f_1,f_2)=(\phi(f_1),\phi(f_2))$ is the multiplicative Hom-$\rho$-Lie algebra.
\begin{definition}\cite{PZCG}\label{461}
	A bilinear map $\mu:\mathcal{L}\longrightarrow {\rm gl(V)}$ is called a representation of  $\mathcal{B}_{\rho,\phi}$ on vector space $V$ with respect to $\beta\in {\rm gl(V)}$ if the following equalities are satisfied	
	\begin{align}
	\mu[(f_1,f_2),(g_1,g_2)]_{_\mathcal{L}}\circ \beta&=\mu(\phi_1(f_1,f_2))\mu(g_1,g_2)-\rho(f_1+f_2,g_1+g_2)\mu(\phi_1(g_1,g_2))\mu(f_1,f_2),\label{1}\\
	\mu([g_1,g_2,g_3],\phi(f))\circ \beta&=\mu(\phi_1(g_1,g_2))\mu(g_3,f)+\rho(g_1,g_2+g_3)\mu(\phi_1(g_2,g_3))\mu(g_1,f)\label{2}\\
	&\ \ \  +\rho(g_1+g_2,g_3)\mu(\phi_1(g_3,g_1))\mu(g_2,f),\nonumber\\
	\mu(\phi(g),[f_1,f_2,f_3])\circ \beta&=\rho(g,f_1+f_2)\mu(\phi_1(f_1,f_2))\mu(g,f_3)+\rho(g,f_2+f_3)\rho(f_1,f_2+f_3)\mu(\phi_1(f_2,f_3))\mu(g,f_1)\\
	&\ \ \  +\rho(g,f_1+f_3)\rho(f_1+f_2,f_3)\mu(\phi_1(f_3,f_1))\mu(g,f_2).\nonumber
	\end{align}
\end{definition}
If $\mathcal{B}_{\rho,\phi}$ is multiplicative, in addition to the above conditions, the following condition must be hold too 
$$\mu(\phi(f),\phi(g))\circ\beta=\beta\circ\mu(f,g).$$
\begin{lemma}\label{sa5}
	Let $(V,\mu,\beta)$ be a representation of $\mathcal{B}_{\rho,\phi}$. Then, we have
	\begin{align*}
	0&=\rho(f_1+f_2,g_1)\mu(\phi(g_1), [f_1,f_2,g_2]_\mathcal{B})\circ \beta +\mu([f_1,f_2,g_1]_\mathcal{B}, \phi(g_2))\circ \beta\\
	&\quad +\rho(f_1+f_2,g_1+g_2)\rho(g_1+g_2,f_1)\mu(\phi(f_1), [g_1, g_2, f_2]_\mathcal{B})\circ \beta\\
	&\ \ \ +\rho(f_1+f_2, g_1+g_2)\mu([g_1, g_2, f_1]_\mathcal{B}, \phi(f_2))\circ \beta.
	\end{align*}
\end{lemma}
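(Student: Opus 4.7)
The plan is to mimic the proof of Lemma \ref{aa5} verbatim, with the $\beta$-twist supplied by the Hom-representation axiom. The key input is equation \eqref{1} in Definition \ref{461}, applied twice: once to the ordered pair $((f_1,f_2),(g_1,g_2))$ and once with the two pairs swapped. The other definitional conditions \eqref{2} and its analogue will not be needed; only the ``commutator'' relation is used, which is why the identity has this clean symmetric four-term shape.

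First, I would expand the left-hand side of \eqref{1} using the explicit bracket on the fundamental set,
\[
[(f_1,f_2),(g_1,g_2)]_{\mathcal{L}} = ([f_1,f_2,g_1]_\mathcal{B},\phi(g_2)) + \rho(f_1+f_2,g_1)(\phi(g_1),[f_1,f_2,g_2]_\mathcal{B}),
\]
so that \eqref{1} yields
\[
\mu([f_1,f_2,g_1]_\mathcal{B},\phi(g_2))\circ\beta + \rho(f_1+f_2,g_1)\mu(\phi(g_1),[f_1,f_2,g_2]_\mathcal{B})\circ\beta = \mu(\phi_1(f_1,f_2))\mu(g_1,g_2) - \rho(f_1+f_2,g_1+g_2)\mu(\phi_1(g_1,g_2))\mu(f_1,f_2).
\]
These are precisely the first two terms of the claimed identity on the left.

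Next, I would perform the same expansion after swapping $(f_1,f_2)\leftrightarrow (g_1,g_2)$, obtaining the analogous identity with $\mu([g_1,g_2,f_1]_\mathcal{B},\phi(f_2))\circ\beta$ and $\rho(g_1+g_2,f_1)\mu(\phi(f_1),[g_1,g_2,f_2]_\mathcal{B})\circ\beta$ on the left and the swapped commutator on the right. Multiplying this second identity through by $\rho(f_1+f_2,g_1+g_2)$ and using the two-cycle relation $\rho(f_1+f_2,g_1+g_2)\rho(g_1+g_2,f_1+f_2)=1$, the right-hand side becomes the exact negative of the right-hand side of the first identity; the product of scalars in front of $\mu(\phi(f_1),[g_1,g_2,f_2]_\mathcal{B})\circ\beta$ reproduces the factor $\rho(f_1+f_2,g_1+g_2)\rho(g_1+g_2,f_1)$ appearing in the statement. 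Adding the two identities collapses the operator commutators on the right to zero and leaves on the left precisely the four-term sum whose vanishing is to be proved.

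There is no substantive obstacle; the whole argument is a bookkeeping exercise in two-cycle identities and a single application of \eqref{1} in each orientation. The only place to be careful is in checking that the scalar $\rho(f_1+f_2,g_1+g_2)\rho(g_1+g_2,f_1)$ in the third term of the statement is exactly what comes out after multiplying the swapped version of \eqref{1} by $\rho(f_1+f_2,g_1+g_2)$, which is immediate from the definition of the bracket on $\mathcal{L}$.
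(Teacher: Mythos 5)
Your argument is correct and is exactly the paper's approach: the paper proves this lemma by referring back to Lemma \ref{aa5}, whose proof is precisely the two applications of the commutator relation (here \eqref{1}) to $((f_1,f_2),(g_1,g_2))$ and its swap, followed by multiplying the swapped identity by $\rho(f_1+f_2,g_1+g_2)$ and cancelling via $\rho(f_1+f_2,g_1+g_2)\rho(g_1+g_2,f_1+f_2)=1$. The scalar bookkeeping you describe, including the factor $\rho(f_1+f_2,g_1+g_2)\rho(g_1+g_2,f_1)$ in the third term, matches the statement.
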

\begin{proof}
	For the proof refer to Lemma \ref{aa5}.
\end{proof}
If $V=\mathcal{B}_{\rho,\phi}$ and $\phi=\beta\in {\rm gl(\mathcal{B}_{\rho,\phi})}$, then the bilinear map $ad:\mathcal{B}\times \mathcal{B}\longrightarrow {\rm gl(\mathcal{B})}$ defined by $ad(f_1,f_2)(f_3)=[f_1,f_2,f_3]_{\mathcal{B}}$
is a representation of $\mathcal{B}_{\rho,\phi}$ with respect to $\beta=\phi$.
\begin{proposition}\label{pro 3.14}
Consider the 3-Hom-$\rho$-Lie algebra $\mathcal{B}_{\rho,\phi}$. Let $V$ be a graded vector space, $\beta\in {\rm gl(V)}$ and $\mu:\wedge^2\mathcal{B}\longrightarrow gl(V)$ be a bilinear map. Then $(V,\mu,\beta)$ is a representation of $\mathcal{B}_{\rho,\phi}$ if and only if  $\mathcal{B}\oplus V$ is a 3-Hom-$\rho$-Lie algebra with the following structures 
	\begin{align*}
	[f_1+v_1, f_2+v_2, f_3+v_3]^{\mu}_{\mathcal{B}\oplus V} &= [f_1, f_2, f_3]_\mathcal{B} +\mu(f_1, f_2)v_3\\
	&\ \ \ \ +\rho(f_1, f_2+f_3)\mu(f_2,f_3)v_1+\rho(f_1+f_2,f_3)\mu(f_3,f_1)v_2,\\
	\psi(f_1+v_1)&=\phi(f_1)+\beta(v_1),
	\end{align*}
	where $f_1, f_2, f_3\in Hg(\mathcal{B})$ and $v_1, v_2, v_3\in Hg(V)$.
\end{proposition}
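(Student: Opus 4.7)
The plan is to follow the same pattern as Proposition \ref{zahra}, lifting that argument to the Hom setting by tracking the additional twists introduced by $\phi$ and $\beta$. First I would verify the easy axioms. The $\rho$-skew-symmetry of $[\cdot,\cdot,\cdot]^{\mu}_{\mathcal{B}\oplus V}$ in the first two arguments follows from the $\rho$-skew-symmetry of $[\cdot,\cdot,\cdot]_{\mathcal{B}}$ together with the fact that $\mu$ is defined on $\wedge^{2}\mathcal{B}$, so that $\mu(f_1,f_2) = -\rho(f_1,f_2)\mu(f_2,f_1)$; the corresponding identity in the last two arguments follows by rearranging the three $\mu$-terms via the two-cycle relations $\rho(a,b)\rho(b,a)=1$ and $\rho(a+b,c)=\rho(a,c)\rho(b,c)$. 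Degree compatibility $|[F_1,F_2,F_3]^{\mu}_{\mathcal{B}\oplus V}|=|F_1|+|F_2|+|F_3|$ is visible term-by-term since $\mu$ is homogeneous of degree zero, and the evenness of $\psi = \phi \oplus \beta$ is immediate from that of $\phi$ and $\beta$.

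The heart of the argument is the $\rho$-fundamental identity for $[\cdot,\cdot,\cdot]^{\mu}_{\mathcal{B}\oplus V}$ and $\psi$. Writing $F_i = f_i + v_i$ and $G_j = g_j + w_j$, I would substitute into
\[
[\psi(F_1),\psi(F_2),[G_1,G_2,G_3]^{\mu}_{\mathcal{B}\oplus V}]^{\mu}_{\mathcal{B}\oplus V} \;=\; \text{(three right-hand-side terms)},
\]
and expand fully. The resulting equation splits into a pure $\mathcal{B}$-component, which is precisely the $\rho$-fundamental identity for $\mathcal{B}_{\rho,\phi}$ and therefore holds automatically, and a $V$-component, which is a sum of terms each linear in exactly one of $v_1,v_2,v_3,w_1,w_2,w_3$. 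By linearity, the identity on the $V$-side holds if and only if each of these six coefficients vanishes separately.

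The main work is the coefficient matching. Specialising $F_1,F_2,G_1,G_2 \in \mathcal{B}$ with only $G_3$ carrying a $V$-part, the $w_3$-coefficient of the fundamental identity rearranges (using multiplicativity of $\rho$) to exactly axiom \eqref{1} of Definition \ref{461}. Taking the outer slots in $\mathcal{B}$ and varying $w_1$ or $w_2$ isolates axioms \eqref{2} and \eqref{3}, after absorbing the $\rho(g_1,g_2+g_3)$ and $\rho(g_1+g_2,g_3)$ prefactors coming from the expansion of $[G_1,G_2,G_3]^{\mu}_{\mathcal{B}\oplus V}$. The three remaining coefficient equations (in $v_1,v_2,v_3$) are redundant: they either repeat one of \eqref{1}--\eqref{3} after applying the $\rho$-skew-symmetry of the bracket in its first two arguments, or reduce to the auxiliary balance proved in Lemma \ref{sa5}. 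Conversely, reading the same specialisations backwards recovers each of \eqref{1}--\eqref{3} from the fundamental identity for $\mathcal{B}\oplus V$, giving the ``only if'' direction.

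The principal obstacle is purely combinatorial: there are twelve $\mu$-terms on each side of the fundamental identity, each carrying a $\rho$-prefactor built from up to five homogeneous degree labels, and reconciling them requires systematic use of the two-cycle identities. No genuinely new ingredient appears beyond those already in Proposition \ref{zahra} and Lemma \ref{sa5}; the $\phi$- and $\beta$-twists attach uniformly to every term on both sides and cancel across the equality precisely because Definition \ref{461} is calibrated with exactly those twists.
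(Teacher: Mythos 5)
The paper gives no written proof of this proposition (just as its classical analogue, Proposition \ref{zahra}, is dismissed with ``direct computation''), so your overall strategy --- expand the Hom-fundamental identity for $[\cdot,\cdot,\cdot]^{\mu}_{\mathcal{B}\oplus V}$ and $\psi$, observe that the $\mathcal{B}$-component is the fundamental identity of $\mathcal{B}_{\rho,\phi}$, and match the coefficients of the individual $V$-components against the axioms of Definition \ref{461} --- is exactly the intended argument, and the verification of $\rho$-skew-symmetry via $\mu\in\Hom(\wedge^2\mathcal{B},{\rm gl}(V))$ is also correct.

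There is, however, a concrete bookkeeping error in the coefficient matching that would derail the ``only if'' direction if followed literally. The fundamental identity involves five homogeneous elements $F_1=f_1+v_1$, $F_2=f_2+v_2$, $G_1=g_1+w_1$, $G_2=g_2+w_2$, $G_3=g_3+w_3$, hence five (not six) $V$-coefficients; there is no $v_3$. More importantly, \emph{all three} of the $w_1$, $w_2$, $w_3$ coefficient equations reduce (after clearing common $\rho$-factors via the two-cycle identities) to instances of axiom \eqref{1}: for example the $w_3$ equation is \eqref{1} for the pairs $(f_1,f_2)$, $(g_1,g_2)$, and the $w_1$ equation is \eqref{1} for $(f_1,f_2)$, $(g_2,g_3)$. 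Axioms \eqref{2} and \eqref{3} come instead from the $v_2$ and $v_1$ coefficients respectively: e.g.\ the $v_2$-part of the left side is $\rho(f_1+f_2,g_1+g_2+g_3)\,\mu([g_1,g_2,g_3]_{\mathcal{B}},\phi(f_1))\circ\beta$, and equating it with the three right-hand contributions and cancelling $\rho(f_1+f_2,g_1+g_2+g_3)$ yields precisely \eqref{2} with $f=f_1$. Your proposal declares the $v_1,v_2$ equations ``redundant'' and claims to extract \eqref{2}, \eqref{3} from $w_1,w_2$; done that way you would recover only axiom \eqref{1} and could not deduce \eqref{2} and \eqref{3} from the Jacobi identity of $\mathcal{B}\oplus V$. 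The fix is purely a relabelling --- every coefficient equation is an instance of one of \eqref{1}--\eqref{3} and conversely each axiom is realised by some coefficient, so Lemma \ref{sa5} is not actually needed --- but as written the assignment is wrong and must be corrected.
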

Consider the representation $(V,\mu,\beta)$ of $\mathcal{B}_{\rho,\phi}$ and $V^{\star}$ as the dual of vector space $V$. We define bilinear map $\widetilde{\mu}:\mathcal{B}\times \mathcal{B}\longrightarrow End(V^{\star})$ by $\widetilde{\mu}(f_1,f_2)(\varrho)=-\rho(f_1+f_2,\varrho)\varrho\circ\mu(f_1,f_2)$, where $f_1,f_2\in Hg(\mathcal{B}),~~\varrho\in V^{\star}$ and set $\widetilde{\beta}(\varrho)=\varrho\circ\beta$. \\
Note that by Definition \ref{dd} and Lemma \ref{sa5}, we can deduce that if $(V,\mu,\beta)$ is a $\rho$-skew symmetric representation, then $(V^{\star},\widetilde{\mu},\widetilde{\beta})$ is a representation of $\mathcal{B}_{\rho,\phi}$.\\
By abuse the above notation, $(\mathcal{B}^*,ad^*)$ with respect to $\tilde{\beta}(\varrho)=\varrho\circ\phi$ is a representation of $\mathcal{B}_{\rho,\phi}$. So by Proposition \ref{pro 3.14}, $\mathcal{B}\oplus \mathcal{B}^*$ is a 3-Hom-$\rho$-Lie algebra together with the following structure
\begin{align*}
[f_1+\alpha_1, f_2+\alpha_2, f_3+\alpha_3]^{ad^*}_{\mathcal{B}\oplus \mathcal{B}^*} &= [f_1, f_2, f_3]_\mathcal{B} +ad^*(f_1, f_2)\alpha_3\\
&\ \ \ \ +\rho(f_1, f_2+f_3)ad^*(f_2,f_3)\alpha_1+\rho(f_1+f_2,f_3)ad^*(f_3,f_1)\alpha_2,\\
(\phi+\phi^*)(f+\alpha)&=\phi(f)+\alpha\circ\phi.
\end{align*}
Now, consider the bilinear map $\varphi(f+\alpha,g+\beta)=\alpha(g)+\rho(f,g)\beta(f)$. Then $(\mathcal{B}\oplus \mathcal{B}^*, [.,.,.]^{ad^*}_{\mathcal{B}\oplus \mathcal{B}^*}, \rho,\phi+\phi^*,\varphi)$ is a metric 3-Hom-$\rho$-Lie algebra.
\section{3-pre-Hom-$\rho$-Lie algebras}
This part is devoted to the 3-pre-Hom-$\rho$-Lie algebras, which are studied similar to the classical case in Section 4. Actually, we give the similar results to Section 4 in the case of 3-pre-Hom-$\rho$-Lie algebras.

\begin{definition}
$(\mathcal{B},\{.,.,.\},\rho,\phi)$ is called a 3-pre-Hom-$\rho$-Lie algebra if
\item[i.]
$\mathcal{B}$ is a $G$-graded vector space,
\item[ii.]
$\phi:\mathcal{B}\longrightarrow \mathcal{B}$ is an even linear map,
\item[iii.]
$\{.,.,.\}:\otimes^3 \mathcal{B}\longrightarrow \mathcal{B}$ is a trilinear map satisfying the following relations
	\begin{align*}
	|\{f_1, f_2, f_3\}|&=|f_1| +|f_2| +|f_3|,\\
	\{f_1, f_2, f_3\}&=-\rho(f_1,f_2)\{f_2, f_1, f_3\},\\
	\{\phi(f_1),\phi(f_2), \{g_1, g_2,g_3\}\}&=\{[f_1,f_2,g_1]_c, \phi(g_2), \phi(g_3)\} +\rho(f_1+f_2,g_1)\{\phi(g_1), [f_1,f_2,g_2]_c, \phi(g_3)\}\\
	&\quad +\rho(f_1+f_2,g_1+g_2)\{\phi(g_1), \phi(g_2), \{f_1,f_2,g_3\}\},\\
	\{[f_1,f_2,f_3]_c, \phi(g_1), \phi(g_2)\}&=\{\phi(f_1), \phi(f_2), \{f_3, g_1, g_2\}\} +\rho(f_1,f_2+f_3)\{\phi(f_2), \phi(f_3), \{f_1, g_1,g_2\}\}\\
	&\quad +\rho(f_1+f_2,f_3)\{\phi(f_3), \phi(f_1), \{f_2,g_1,g_2\}\},
	\end{align*}
	where 
	\begin{align*}
	[f_1, f_2, f_3]_c=\{f_1, f_2, f_3\}+\rho(f_1, f_2+f_3)\{f_2, f_3, f_1\}+\rho(f_1+f_2,f_3)\{f_3, f_1, f_2\}.
	\end{align*}
We denoted by $\mathcal{B}_{\{\},\phi}$ the 3-pre-Hom-$\rho$-Lie algebra $(\mathcal{B},\{.,.,.\},\rho,\phi)$.
\end{definition}
For a 3-pre-Hom-$\rho$-Lie algebra $(\mathcal{B}, \{.,.,.\}, \rho, \phi)$, it is easy to see that $(\mathcal{B}, [.,.,.]_c,\rho,\phi)$ is a 3-Hom-$\rho$-Lie algebra. We denote it by $\mathcal{B}^c_{\rho,\phi}$.\\

Let us define two multiplications  $L:\wedge^2\mathcal{B}\longrightarrow gl(\mathcal{B})$ and $R:\otimes^2\mathcal{B}\longrightarrow gl(\mathcal{B})$ on $\mathcal{B}_{\{\},\phi}$, which is called respectively the left and right multiplications, by  $L(f,g)h=\{f,g,h\}$ and $R(f,g)h=\rho(f+g,h)\{h,f,g\}$. Then $(\mathcal{B},L)$ is a representation of $\mathcal{B}^c_{\rho,\phi}$ with respect to $\beta=\phi$.
\begin{definition}
The triple $(V,\mu,\tilde{\mu})$ consisting of a $G$-graded vector space $V$, a representation $\mu$ of $\mathcal{B}^c_{\rho,\phi}$ and a bilinear map $\tilde{\mu}:\otimes^2\mathcal{B}\longrightarrow gl(V)$
	is called a representation of $\mathcal{B}_{\{\},\phi}$ with respect to $\beta\in{\rm gl(V)}$ if for all homogeneous elements $f_1, f_2, f_3, f_4\in Hg(\mathcal{B})$, the following equalities hold
	\begin{align*}
	\tilde{\mu}(\phi(f_1), \{f_2, f_3, f_4\})\circ \beta&=\rho(f_1, f_2+f_3)\mu(\phi(f_2), \phi(f_3))\tilde{\mu}(f_1, f_4)\\
	&\ \ \ +\rho(f_1, f_3+f_4)\rho(f_2, f_3+f_4)\tilde{\mu}(\phi(f_3), \phi(f_4))\tilde{\mu}(f_1, f_2)\\
	&\ \ \ -\rho(f_1, f_2+f_4)\rho(f_3, f_4)\tilde{\mu}(\phi(f_2), \phi(f_4))\tilde{\mu}(f_1, f_3)\\
	&\ \ \ +\rho(f_1+f_2,f_3+f_4)\tilde{\mu}(\phi(f_3), \phi(f_4))\mu(f_1, f_2)\\
	&\ \ \ +\rho(f_1, f_2+f_3+f_4)\rho(f_3, f_4)\tilde{\mu}(\phi(f_2), \phi(f_4))\tilde{\mu}(f_3, f_1)\\
	&\ \ \ -\rho(f_1, f_2+f_4)\rho(f_3, f_4)\tilde{\mu}(\phi(f_2), \phi(f_4))\mu(f_1, f_3)\\
	&\ \ \ -\rho(f_2, f_3+f_4)\rho(f_1, f_2+f_3+f_4)\tilde{\mu}(\phi(f_3), \phi(f_4))\tilde{\mu}(f_2,f_1),\\
	&\\
	\mu(\phi(f_1), \phi(f_2))\tilde{\mu}(f_3, f_4)&=\rho(f_1+f_2, f_3+f_4)\tilde{\mu}(\phi(f_3), \phi(f_4))\mu(f_1, f_2)\\
	&\ \ \ -\rho(f_1+f_2, f_3+f_4)\rho(f_1, f_2)\tilde{\mu}(\phi(f_3), \phi(f_4))\tilde{\mu}(f_2, f_1)\\
	&\ \ \ +\rho(f_1+f_2, f_3+f_4)\tilde{\mu}(\phi(f_3), \phi(f_4))\tilde{\mu}(f_1, f_2)+\tilde{\mu}([f_1, f_2, f_3]_\mathcal{B}, \phi(f_4))\circ \beta\\
	&\ \ \ +\rho(f_1+f_2, f_3)\tilde{\mu}(\phi(f_3), \{f_1, f_2, f_4\}),\\
	&\\
	\tilde{\mu}([f_1, f_2, f_3]_\mathcal{B}, \phi(f_4))\circ \beta&=\mu(\phi(f_1), \phi(f_2))\tilde{\mu}(f_3, f_4)+\rho(f_1, f_2+f_3)\mu(\phi(f_2), \phi(f_3))\tilde{\mu}(f_1, f_4)\\
	&\ \ \ +\rho(f_1+f_2,f_3)\mu(\phi(f_3), \phi(f_1))\tilde{\mu}(f_2, f_4),\\
	&\\
	\tilde{\mu}(\phi(f_3), \phi(f_4))\mu(f_1, f_2)&=\rho(f_1, f_2)\mu(\phi(f_3), \phi(f_4))\tilde{\mu}(f_2, f_1)-\tilde{\mu}(\phi(f_3), \phi(f_4))\tilde{\mu}(f_1, f_2)\\
	&\ \ \ +\rho(f_3+f_4, f_1+f_2)\mu(\phi(f_1), \phi(f_2))\tilde{\mu}(f_3, f_4)\\
	&\ \ \ -\rho(f_3+f_4, f_1+f_2)\rho(f_1, f_2)\tilde{\mu}(\phi(f_2), \{f_1, f_3, f_4\})\circ \beta\\
	&\ \ \ +\rho(f_3+f_4, f_1+f_2)\tilde{\mu}(\phi(f_1), \{f_2, f_3, f_4\})\circ \beta.
	\end{align*}
\end{definition}
For example $(\mathcal{B},L,R,\phi)$ is a representation of $\mathcal{B}_{\{\},\phi}$.
Let us to define the operation $\{.,.,.\}_{\mu, \tilde{\mu}}:\otimes^2(\mathcal{B}\oplus V)\longrightarrow \mathcal{B}\oplus V$ and the linear map $\psi: \mathcal{B}\oplus V\longrightarrow \mathcal{B}\oplus V$ by
	\begin{align*}
	\{f_1+v_1, f_2+v_2, f_3+v_3\}_{\mu, \tilde{\mu}}&=\{f_1, f_2, f_3\}+\mu(f_1, f_2)v_3+\rho(f_1, f_2+f_3)\tilde{\mu}(f_2, f_3)v_1-\rho(f_2, f_3)\tilde{\mu}(f_1, f_3)v_2,\\
	\psi(f_1+v_1)&=\phi(f_1)+\beta(v_1),
	\end{align*}
for all $f_1, f_2, f_3\in Hg(\mathcal{B})$ and $v_1, v_2, v_3\in V$. Then $(\mathcal{B}\oplus V, \{.,.,.\}_{\mu, \tilde{\mu}}, \rho, \psi)$ is a 3-pre-Hom-$\rho$-Lie algebra.\\

Consider the operator $\zeta:\otimes^2V\longrightarrow\otimes^2V$ given by $ \zeta(f_1\otimes f_2)=f_2\otimes f_1$, where $V$ is a graded vector space and $f_1\otimes f_2\in\otimes^2V$.\\

Using the above notation, in the following proposition, we give a representation of $\mathcal{B}^c_{\rho,\phi}$:
\begin{proposition}
Let $(V, \mu, \tilde{\mu},\beta)$ be a representation of $\mathcal{B}_{\{\},\phi}$. Then there exists a representation $\nu:\wedge^2\mathcal{B}\longrightarrow gl(V)$ of $\mathcal{B}^c_{\rho,\phi}$ on $V$ that is given by $\nu(f_1, f_2)=(\mu-\rho(f_1, f_2)\tilde{\mu}\xi+\tilde{\mu})(f_1, f_2)$ for all $f_1, f_2\in Hg(\mathcal{B})$.
\end{proposition}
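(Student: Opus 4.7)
The plan is to mimic the proof of Proposition \ref{a14} from the non-Hom case, simply inserting the twisting map $\beta$ at the appropriate spots and appealing to the Hom-analogues of the propositions used there. I would first invoke the paragraph immediately preceding this proposition, which asserts that $(\mathcal{B}\oplus V, \{.,.,.\}_{\mu,\tilde{\mu}}, \rho, \psi)$ is a 3-pre-Hom-$\rho$-Lie algebra, where $\psi(f+v)=\phi(f)+\beta(v)$. Then, by the general fact recorded at the start of Section 6 (that every 3-pre-Hom-$\rho$-Lie algebra has a sub-adjacent 3-Hom-$\rho$-Lie algebra via the symmetrisation $[\cdot,\cdot,\cdot]_c$), the triple $(\mathcal{B}\oplus V, [\cdot,\cdot,\cdot]_c, \rho, \psi)$ is a 3-Hom-$\rho$-Lie algebra.

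Next I would unpack this sub-adjacent bracket explicitly on homogeneous elements $f_i+v_i$. Using
\begin{align*}
[x_1,x_2,x_3]_c=\{x_1,x_2,x_3\}+\rho(x_1,x_2+x_3)\{x_2,x_3,x_1\}+\rho(x_1+x_2,x_3)\{x_3,x_1,x_2\},
\end{align*}
and substituting the formula \eqref{a10}-type definition
\begin{align*}
\{f_1+v_1,f_2+v_2,f_3+v_3\}_{\mu,\tilde{\mu}}=\{f_1,f_2,f_3\}+\mu(f_1,f_2)v_3+\rho(f_1,f_2+f_3)\tilde{\mu}(f_2,f_3)v_1-\rho(f_2,f_3)\tilde{\mu}(f_1,f_3)v_2,
\end{align*}
the $\mathcal{B}$-part collapses to $[f_1,f_2,f_3]_c$ while the $V$-part, after a careful collection of the $\rho$-factors, assembles into
\begin{align*}
(\mu-\rho(f_1,f_2)\tilde{\mu}\zeta+\tilde{\mu})(f_1,f_2)v_3+\rho(f_1,f_2+f_3)(\mu-\rho(f_2,f_3)\tilde{\mu}\zeta+\tilde{\mu})(f_2,f_3)v_1+\rho(f_1+f_2,f_3)(\mu-\rho(f_3,f_1)\tilde{\mu}\zeta+\tilde{\mu})(f_3,f_1)v_2,
\end{align*}
which by definition of $\nu$ is exactly $\nu(f_1,f_2)v_3+\rho(f_1,f_2+f_3)\nu(f_2,f_3)v_1+\rho(f_1+f_2,f_3)\nu(f_3,f_1)v_2$. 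This is precisely the semi-direct product formula from Proposition \ref{pro 3.14} with twisting $\psi=\phi+\beta$.

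Finally, I would invoke Proposition \ref{pro 3.14}: since $(\mathcal{B}\oplus V,[\cdot,\cdot,\cdot]_c,\rho,\psi)$ is a 3-Hom-$\rho$-Lie algebra and coincides with the semi-direct product built from $\nu$, the ``only if'' direction of that proposition yields that $(V,\nu,\beta)$ is a representation of the sub-adjacent 3-Hom-$\rho$-Lie algebra $\mathcal{B}^c_{\rho,\phi}$, which is what we needed.

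I expect the only real obstacle to be the $\rho$-sign bookkeeping in the middle step, in particular confirming that the three cyclic summands of $[\cdot,\cdot,\cdot]_c$ produce the $-\rho(f_i,f_j)\tilde{\mu}\zeta$ contribution with the correct grading factor in each cyclic slot; the $\rho$-skew-symmetry $\{f,g,h\}=-\rho(f,g)\{g,f,h\}$ and the cocycle identities for $\rho$ make this routine but verbose. No new algebraic input beyond what has already been invoked is required.
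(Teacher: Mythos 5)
Your proposal is correct and follows exactly the route the paper itself takes: the paper leaves this Hom-version unproved, but its proof of the non-Hom analogue (Proposition \ref{a14}) is precisely your argument --- pass to the 3-pre-(Hom-)$\rho$-Lie algebra structure on $\mathcal{B}\oplus V$, form its sub-adjacent bracket, identify the $V$-component with $\nu$ as in \eqref{a12}, and conclude via the semidirect-product characterization (Proposition \ref{pro 3.14} in the Hom case). No discrepancy to report.
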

The above proposition conclude that if $(V,\mu, \tilde{\mu})$ is a representation of $\mathcal{B}_{\{\},\phi}$, then the 3-pre-Hom-$\rho$-Lie algebras $(\mathcal{B}\oplus V,\{.,.,.\}_{\mu, \tilde{\mu}},\rho,\psi)$ and $(\mathcal{B}\oplus V,\{.,.,.\}_{\nu,0},\rho,\psi)$ have the same sub-adjacent 3-Hom-$\rho$-Lie algebra given by \eqref{a12}.

Let us define the dual of the representation $(V,\nu)$ by $\nu^*(f_1, f_2)=\mu^*-\rho(f_1,f_2)\tilde{\mu}^*\xi+\tilde{\mu}^*(f_1,f_2)$. In the following proposition, we will talk about the dual of the representation $(V,\mu,\tilde{\mu})$:
\begin{proposition}
Consider $\mathcal{B}_{\{\},\phi}$ equipped with the $\rho$-skew-symmetric representation $(V,\mu,\tilde{\mu},\beta)$. Then $(V^*,\nu^*, -\tilde{\mu}^*,\beta)$ is a representation of $\mathcal{B}_{\{\},\phi}$, which is called the dual representation of $(V,\mu, \tilde{\mu})$.
\end{proposition}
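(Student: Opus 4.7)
The plan is to mirror the argument used in the non-Hom case (whose proof appeals to Propositions \ref{a13}, \ref{a14}, and Definition \ref{a7}) and replace each ingredient by its Hom-counterpart established in Section 5 and Section 6. Concretely, given the $\rho$-skew-symmetric representation $(V,\mu,\tilde{\mu},\beta)$ of $\mathcal{B}_{\{\},\phi}$, I would first invoke the preceding proposition of this section to obtain that
\[
\nu(f_1,f_2) \;=\; \mu(f_1,f_2)-\rho(f_1,f_2)\tilde{\mu}\zeta(f_1,f_2)+\tilde{\mu}(f_1,f_2)
\]
defines a representation of the sub-adjacent 3-Hom-$\rho$-Lie algebra $\mathcal{B}^c_{\rho,\phi}$ on $V$ with respect to $\beta$.

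Next, I would apply the Hom-analogue of the dualization procedure (the statement immediately preceding this proposition, which generalises Proposition \ref{a13} to 3-Hom-$\rho$-Lie algebras via Definition \ref{461} and Lemma \ref{sa5}) to the representation $(V,\nu,\beta)$. Together with the $\rho$-skew-symmetry assumption this yields that $(V^{*},\nu^{*},\tilde{\beta})$, with $\nu^{*}(f_1,f_2)(\varrho)=-\rho(f_1+f_2,\varrho)\varrho\circ\nu(f_1,f_2)$ and $\tilde{\beta}(\varrho)=\varrho\circ\beta$, is a representation of $\mathcal{B}^{c}_{\rho,\phi}$. This discharges the first requirement in the definition of a representation of a 3-pre-Hom-$\rho$-Lie algebra.

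It then remains to verify the four structural identities of the previous definition for the triple $(\nu^{*},-\tilde{\mu}^{*},\tilde{\beta})$. I would evaluate each identity on an arbitrary $\varrho\in V^{*}$ and pair it with an arbitrary $v\in V$; this transfers the question to a statement involving $\mu$, $\tilde{\mu}$, $\beta$ with the four arguments $f_1,f_2,f_3,f_4$ cyclically (and in some cases transpositively) permuted. Using the $\rho$-skew-symmetry of $\mu$ one rewrites each adjoint composition, and using the two-cycle identities $\rho(a+b,c)=\rho(a,c)\rho(b,c)$ and $\rho(a,b)=\rho(b,a)^{-1}$ one collects the resulting grade factors. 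Each of the four target identities then collapses precisely onto one of the four defining identities of $(\mu,\tilde{\mu})$ with permuted arguments. The Hom-compatibility identities $\nu^{*}(\phi(f_1),\phi(f_2))\circ\tilde{\beta}=\tilde{\beta}\circ\nu^{*}(f_1,f_2)$ and $-\tilde{\mu}^{*}(\phi(f_1),\phi(f_2))\circ\tilde{\beta}=-\tilde{\beta}\circ\tilde{\mu}^{*}(f_1,f_2)$ follow by direct pairing with $v\in V$ from the analogous conditions satisfied by $(\mu,\tilde{\mu},\beta)$.

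The main obstacle is purely combinatorial: keeping track of the bi-character factors that appear when the dual operators $\mu^{*}$ and $\tilde{\mu}^{*}$ are composed and when the flip $\zeta$ is pulled through the pairing. Once one fixes conventions (for instance, always pushing grade factors to the leftmost position using $\rho(a+b,c)=\rho(a,c)\rho(b,c)$ and exploiting $\rho(a,a)=\pm 1$ on homogeneous elements), each of the four target identities reduces, after relabeling, to the identity for $(\mu,\tilde{\mu})$ obtained by the appropriate permutation of $f_1,\ldots,f_4$. No new structural input is needed beyond the Hom-lift of the classical argument.
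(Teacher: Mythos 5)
Your proposal follows essentially the same route as the paper: the paper proves the non-Hom analogue by simply citing the dualization proposition for 3-$\rho$-Lie algebra representations, the $\nu$-construction, and the definition of a 3-pre-$\rho$-Lie representation, and states the Hom version without further proof as a "similar result"; your plan is exactly this, lifted to the Hom setting with the extra observation that the remaining identities reduce, after tracking the two-cycle factors, to permuted instances of the defining identities of $(\mu,\tilde{\mu})$. Your write-up is in fact more explicit than the paper about where the actual work lies (the bookkeeping of the $\rho$-factors and the Hom-compatibility conditions with $\tilde{\beta}$), so no gap relative to the paper's own argument.
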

\begin{remark}
Consider $\mathcal{B}_{\{\},\phi}$ as a 3-pre-Hom-$\rho$-Lie algebra with the $\rho$-skew-symmetric representation $(V,\mu, \tilde{\mu},\beta)$. Then the 3-pre-Hom-$\rho$-Lie algebras $(\mathcal{B}\oplus V^*, \{.,.,.\}_{\mu^*,0},\rho,\psi)$ and $(\mathcal{B}\oplus V^*,\{.,.,.\}_{\nu^*, -\tilde{\mu}^*},\rho,\psi)$ have the same sub-adjacent 3-Hom-$\rho$-Lie algebra $(\mathcal{B}^c\oplus V^*,[.,.,.]^{\mu^*}_c,\rho,\phi)$ with the representation $(V^*,\mu^*,\tilde{\beta})$.
\end{remark}
In the following definition, we define the $\rho$-Hom-$\mathcal{O}$-operator associated to representation $(V,\mu, \beta)$. The importance of defining this operator is that it can be used to construct a 3-pre-Hom-$\rho$-Lie algebra structure on representation $V$.
\begin{definition}\label{z12}
Let $(V,\mu, \beta)$ be a representation of $\mathcal{B}_{\rho,\phi}$. If an even linear operator $T : V \longrightarrow \mathcal{B}_{\rho,\phi}$  satisfies
\begin{align*}
T\circ \beta&=\phi\circ T,\\
[Tx, Ty, Tz]_\mathcal{B} &= T (\mu(Tx, Ty)z +\rho(x, y+z)(\mu(Ty, Tz)x + \rho(x+y, z)\mu(Tz, Tx)y),\quad \forall x,y,z \in V, 
\end{align*}
it is called a $\rho$-Hom-$\mathcal{O}$-operator associated to representation $(V,\mu, \beta)$.
\end{definition}
\begin{lemma}
Let $(V, \mu, \beta)$ be a $\rho$-skew-symmetric representation of $\mathcal{B}_{\rho,\phi}$ and $T : V \longrightarrow \mathcal{B}_{\rho,\phi}$ is a $\rho$-Hom-$\mathcal{O}$-operator associated to $(V, \mu, \beta)$. Then $(V, \{.,.,.\}, \rho, \psi)$ is a 3-pre-Hom-$\rho$-Lie algebra, where
\begin{align}
\psi=\beta,\\
\{x,y,z\} &= \mu(Tx, Ty)z,\quad \forall x,y,z\in V.\label{460}
\end{align}
\end{lemma}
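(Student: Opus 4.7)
The plan is to imitate the proof of the analogous classical lemma (the earlier result about 3-pre-$\rho$-Lie algebras from $\rho$-$\mathcal{O}$-operators) with the Hom-twist inserted carefully, using each defining property of a $\rho$-Hom-$\mathcal{O}$-operator and each of the three representation identities of Definition \ref{461}.

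First I would dispatch the easy axioms. The grading condition $|\{x,y,z\}|=|x|+|y|+|z|$ is immediate from $T$ being even and $\mu$ being graded. The $\rho$-skew-symmetry $\{x,y,z\}=-\rho(x,y)\{y,x,z\}$ follows from $\mu(Tx,Ty)=-\rho(Tx,Ty)\mu(Ty,Tx)$ together with $|Tx|=|x|$ since $T$ is even. A convenient preparatory observation is that, using the $\rho$-Hom-$\mathcal{O}$-operator identity and $T\circ\beta=\phi\circ T$, one has
\begin{equation*}
T[x_1,x_2,x_3]_c=[Tx_1,Tx_2,Tx_3]_\mathcal{B}\quad\text{and}\quad T\psi=\phi T,
\end{equation*}
so applying $T$ turns $[\cdot,\cdot,\cdot]_c$ on $V$ into $[\cdot,\cdot,\cdot]_\mathcal{B}$ on $\mathcal{B}$.

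Next I would verify the first fundamental identity of the 3-pre-Hom-$\rho$-Lie axioms. Substituting \eqref{460}, its left-hand side becomes $\mu(\phi(Tx_1),\phi(Tx_2))\mu(Ty_1,Ty_2)y_3$. Now I would apply identity \eqref{1} of Definition \ref{461} to rewrite this as
\begin{equation*}
\mu\bigl([(Tx_1,Tx_2),(Ty_1,Ty_2)]_{\mathcal{L}}\bigr)\circ\beta(y_3)+\rho(x_1+x_2,y_1+y_2)\mu(\phi(Ty_1),\phi(Ty_2))\mu(Tx_1,Tx_2)y_3,
\end{equation*}
then expand the $\mathcal{L}$-bracket and use the above preparatory observation $T[x_1,x_2,y_j]_c=[Tx_1,Tx_2,Ty_j]_\mathcal{B}$, $T\beta=\phi T$ to recognise the three terms on the right-hand side as precisely $\{[x_1,x_2,y_1]_c,\psi(y_2),\psi(y_3)\}$, $\rho(x_1+x_2,y_1)\{\psi(y_1),[x_1,x_2,y_2]_c,\psi(y_3)\}$, and $\rho(x_1+x_2,y_1+y_2)\{\psi(y_1),\psi(y_2),\{x_1,x_2,y_3\}\}$ respectively. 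The second fundamental identity is handled in exactly the same spirit, but starting from representation identity \eqref{2} (applied with the entries $(g_1,g_2,g_3,f)=(Tx_1,Tx_2,Tx_3,Ty_1)$ and evaluated on $y_2$), and then using the $\mathcal{O}$-operator relation to absorb the innermost $T$-bracket.

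The only non-trivial part is the bookkeeping of the $\rho$-factors, because every time one moves a symbol of grade $|Tx_i|=|x_i|$ past another, a factor of $\rho$ is produced; experience with the non-Hom case (the lemma that sets up \eqref{457}) shows that the factors match exactly once one consistently uses $|T\beta(x)|=|x|$. I expect this to be the main (though purely routine) obstacle, and apart from it the proof is essentially a substitution exercise. Since no new geometric insight is required, I would present the verification as a direct computation as in the analogous earlier lemma, referencing Definitions \ref{461} and \ref{z12}.
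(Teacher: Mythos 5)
Your proposal is correct and follows essentially the same route as the paper's own proof: substitute $\{x,y,z\}=\mu(Tx,Ty)z$ into each term of the two fundamental identities, use $T\circ\beta=\phi\circ T$ and $T[x,y,z]_c=[Tx,Ty,Tz]_{\mathcal{B}}$ to rewrite everything as compositions of $\mu$ evaluated at $T$-images, and then invoke the representation identities of Definition \ref{461}. The paper merely states the four rewritten terms and cites the definition, whereas you make explicit which representation identity closes which fundamental identity; the content is the same.
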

\begin{proof}
	Suppose that $x,y,z\in Hg(V)$. It is easy to see that 
	\begin{align*}
	\{x,y,z\}=-\rho(x,y)\{y,x,z\},\qquad T[x,y,z]_c=[Tx,Ty,Tz].
	\end{align*}
	Also, for $x_1, x_2, x_3, x_4, x_5\in Hg(V)$, we have
	\begin{align}
	\{\psi(x_1), \psi(x_2), \{x_3, x_4, x_5\}\}&=\mu(T\circ \psi (x_1), T\circ \psi (x_2))\mu(Tx_3, Tx_4)x_5\nonumber\\
	&=\mu(\phi\circ T (x_1), \phi\circ T (x_2))\mu(Tx_3, Tx_4)x_5,\label{s1}\\
	\{[x_1, x_2, x_3]_c,\psi(x_4), \psi(x_5)\}&=\mu([Tx_1, Tx_2,Tx_3],T\circ \psi(x_4))x_5\nonumber\\
	&=\mu([Tx_1, Tx_2,Tx_3],\phi\circ T(x_4))x_5,\label{s2}\\
	\rho(x_1+x_2, x_3)\{\psi(x_3), [x_1, x_2, x_4]_c, \psi(x_5)\}&=\rho(x_1+x_2, x_3)\mu(T\circ\psi (x_3), [Tx_1, Tx_2, Tx_4])x_5\nonumber\\
	&=\rho(x_1+x_2, x_3)\mu(\phi\circ T (x_3), [Tx_1, Tx_2, Tx_4])x_5,\label{s3}\\
	\rho(x_1+x_2, x_3+x_4)\{\psi(x_3), \psi(x_4),\{x_1,x_2,x_5\}\}&=\rho(x_1+x_2, x_3+x_4)\mu(T\circ \psi(x_3), T\circ \psi(x_4))\mu(Tx_1, Tx_2)x_5\nonumber\\
	&=\rho(x_1+x_2, x_3+x_4)\mu(\phi\circ T(x_3), \phi\circ T(x_4))\mu(Tx_1, Tx_2)x_5.\label{s4}
	\end{align}
	By Definition \ref{461} and the equalities (\ref{s1})--(\ref{s4}), we get the result.
\end{proof}
\begin{lemma}
Consider $\mathcal{B}_{\rho,\phi}$ as a 3-Hom-$\rho$-Lie algebra. Then the following assertions are equivalent
	\begin{itemize}
		\item[i.]
		existence of a compatible 3-pre-Hom-$\rho$-Lie algebra structure,
		\item[ii.]
		existence of an invertible $\rho$-$\mathcal{O}$-operator.
	\end{itemize}
\end{lemma}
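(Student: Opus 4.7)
The plan is to reduce this lemma to its non-Hom counterpart (Lemma \ref{459}) by following the same strategy but tracking the twist by $\phi$ and $\beta$. The forward direction (i)$\Rightarrow$(ii) will use the regular representation and the identity map, while the backward direction (ii)$\Rightarrow$(i) will transport a structure on $V$ back to $\mathcal{B}$ via $T$.

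For (i)$\Rightarrow$(ii): Suppose $(\mathcal{B},\{.,.,.\},\rho,\phi)$ is a compatible 3-pre-Hom-$\rho$-Lie algebra on $\mathcal{B}_{\rho,\phi}$, meaning that its sub-adjacent 3-Hom-$\rho$-Lie algebra $\mathcal{B}^c_{\rho,\phi}$ coincides with the given $\mathcal{B}_{\rho,\phi}$. As remarked just before the definition of representations of $\mathcal{B}_{\{\},\phi}$, the left multiplication $L:\wedge^2\mathcal{B}\longrightarrow\mathfrak{gl}(\mathcal{B})$ gives a representation of $\mathcal{B}^c_{\rho,\phi}$ with $\beta=\phi$. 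I would then check that the identity map $\mathrm{id}_{\mathcal{B}}:\mathcal{B}\to\mathcal{B}$ is an invertible $\rho$-Hom-$\mathcal{O}$-operator associated to $(\mathcal{B},L,\phi)$: the compatibility $\mathrm{id}\circ\phi=\phi\circ\mathrm{id}$ is trivial, and the operator identity in Definition \ref{z12} becomes precisely the defining formula
\[
[f,g,h]_{\mathcal{B}}=[f,g,h]_c=\{f,g,h\}+\rho(f,g+h)\{g,h,f\}+\rho(f+g,h)\{h,f,g\},
\]
which holds by the compatibility assumption.

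For (ii)$\Rightarrow$(i): Given an invertible $\rho$-Hom-$\mathcal{O}$-operator $T:V\to\mathcal{B}$ associated to a $\rho$-skew-symmetric representation $(V,\mu,\beta)$, the previous lemma furnishes a 3-pre-Hom-$\rho$-Lie algebra structure on $V$ via $\{x,y,z\}_V=\mu(Tx,Ty)z$ with $\psi=\beta$. Because $T$ is invertible and satisfies $T\circ\beta=\phi\circ T$, it is an isomorphism of the ambient $G$-graded spaces intertwining $\beta$ and $\phi$, so I would transport the structure to $\mathcal{B}$ by setting
\[
\{f,g,h\}_{\mathcal{B}}:=T\{T^{-1}f,T^{-1}g,T^{-1}h\}_V = T\,\mu(f,g)T^{-1}h,
\]
and verify that $(\mathcal{B},\{.,.,.\}_{\mathcal{B}},\rho,\phi)$ is a 3-pre-Hom-$\rho$-Lie algebra (the axioms are carried over by the isomorphism) and that it is compatible with the given bracket. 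The compatibility is exactly the content of Definition \ref{z12}: applying $T^{-1}$ to the defining identity of $T$ yields
\[
[f,g,h]_{\mathcal{B}} = \{f,g,h\}_{\mathcal{B}}+\rho(f,g+h)\{g,h,f\}_{\mathcal{B}}+\rho(f+g,h)\{h,f,g\}_{\mathcal{B}},
\]
so $\mathcal{B}^c_{\rho,\phi}=\mathcal{B}_{\rho,\phi}$.

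The main obstacle I anticipate is bookkeeping the Hom-twist: one must check that transporting via $T$ really yields the 3-pre-Hom-$\rho$-Lie axioms (not merely the non-Hom ones) on $\mathcal{B}$, which requires that $\phi$ appears in the right places; here the identity $T\circ\beta=\phi\circ T$ built into Definition \ref{z12} is precisely what makes everything go through. Apart from this, the proof is a verbatim adaptation of Lemma \ref{459}, and no new $\rho$-sign identities are needed beyond those already invoked for the non-Hom version.
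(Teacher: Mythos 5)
Your proposal is correct and follows essentially the same route as the paper: the paper's proof simply refers back to the non-Hom case (Lemma \ref{459}), where (ii)$\Rightarrow$(i) is obtained by pushing the induced 3-pre-structure $\{Tu,Tv,Tw\}_{T(V)}=T\{u,v,w\}_V$ onto $T(V)=\mathcal{B}$ via the invertibility of $T$ (your explicit transport $T\mu(f,g)T^{-1}h$ is the same construction), and (i)$\Rightarrow$(ii) takes the identity map as an invertible $\rho$-Hom-$\mathcal{O}$-operator for the regular representation $(\mathcal{B},L)$ with $\beta=\phi$. Your extra care in tracking the intertwining relation $T\circ\beta=\phi\circ T$ supplies detail the paper leaves implicit, but introduces no new idea.
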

\begin{proof}
Refer to the proof of Lemma \ref{459}.
\end{proof}
\textbf{Phase Space:} In the following, we extend the study of phase space of a 3-$\rho$-Lie algebra to 3-Hom-$\rho$-Lie algebras  and show that a 3-Hom-$\rho$-Lie algebra has a phase space if and only if it is sub-adjacent to a 3-pre-Hom-$\rho$-Lie algebra. 
\begin{proposition}%\label{ass}
	Consider the symplectic 3-Hom-$\rho$-Lie algebra $(\mathcal{B}, [.,.,.]_\mathcal{B},\rho,\phi,\omega)$. Then the following structure defines a compatible 3-pre-Hom-$\rho$-Lie algebra structure on $\mathcal{B}$
	$$\omega(\{f,g,h\},\phi(s)) = -\rho(f+g,h)\omega(\phi(h), [f,g,s]_\mathcal{B}),~~\forall f,g,h,s\in Hg(\mathcal{B}).$$
\end{proposition}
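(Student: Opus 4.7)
The plan is to adapt the proof of Proposition \ref{ass} to the Hom-setting. First, using the non-degeneracy of $\omega$, I would define an even linear isomorphism $T\colon \mathcal{B}^* \to \mathcal{B}$ by the defining relation $\omega(T\alpha, g) = \alpha(g)$ for all $\alpha\in \mathcal{B}^*$ and $g\in Hg(\mathcal{B})$.

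Next, I would verify that $T$ is an invertible $\rho$-Hom-$\mathcal{O}$-operator in the sense of Definition \ref{z12}, associated to the coadjoint representation $(\mathcal{B}^*, ad^*, \tilde{\phi})$ of $\mathcal{B}_{\rho,\phi}$, where $\tilde{\phi}(\varrho) = \varrho\circ \phi$. The intertwining condition $T\circ \tilde{\phi} = \phi\circ T$ amounts, after pairing under $\omega$, to the $\omega$-compatibility of $\phi$, i.e.\ $\omega(\phi(h), y) = \omega(h, \phi(y))$, which is the natural symmetry property expected in a symplectic 3-Hom-$\rho$-Lie algebra. For the $\rho$-Hom-$\mathcal{O}$-operator identity $[T\alpha, T\beta, T\gamma]_\mathcal{B} = T\bigl(ad^*(T\alpha, T\beta)\gamma + \rho(\alpha, \beta+\gamma)ad^*(T\beta, T\gamma)\alpha + \rho(\alpha+\beta, \gamma)ad^*(T\gamma, T\alpha)\beta\bigr)$, I would pair both sides under $\omega$ against an arbitrary $\phi(f_4)\in Hg(\mathcal{B})$. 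Using the defining property of $T$, the explicit formula for $ad^*$ from Proposition \ref{a13}, and the $\rho$-skew-symmetry of $\omega$, every summand rewrites as an $\omega$-pairing of the form $\omega([f_i, f_j, f_k]_\mathcal{B}, \phi(f_\ell))$, and the equality then collapses to precisely the symplectic cocycle identity of Definition \ref{a20}.

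With $T$ established as an invertible $\rho$-Hom-$\mathcal{O}$-operator, the lemma immediately preceding the statement endows $V = \mathcal{B}^*$ with a 3-pre-Hom-$\rho$-Lie structure $\{\alpha, \beta, \gamma\}_V := ad^*(T\alpha, T\beta)\gamma$, and the standard transport along the bijection $T$ produces the compatible 3-pre-Hom-$\rho$-Lie algebra structure on $\mathcal{B}$ given by $\{f, g, h\} := T\bigl(ad^*(f, g)T^{-1}(h)\bigr)$. Evaluating $\omega(\{f, g, h\}, \phi(s))$ with this formula and unwinding $T$ and $ad^*$ yields $-\rho(f+g, h)\omega(h, [f, g, \phi(s)]_\mathcal{B})$, which the $\omega$-compatibility of $\phi$ converts into the asserted right-hand side $-\rho(f+g, h)\omega(\phi(h), [f, g, s]_\mathcal{B})$.

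The hard part, I expect, will be the verification of the $\rho$-Hom-$\mathcal{O}$-operator identity: the four $ad^*$-terms on the right must be aligned, with the correct Koszul-type $\rho$-signs, against the four summands of the symplectic cocycle in Definition \ref{a20}. The sign book-keeping under the $\rho$-skew-symmetry of $\omega$ while interchanging arguments is the computationally delicate step of the argument, and care is needed to ensure that the various appearances of $\phi$ (inside the coadjoint action, inside the $T$--intertwiner, and inside the cocycle) match up consistently.
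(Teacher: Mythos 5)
Your overall strategy is the same as the paper's: realize the symplectic form as an invertible $\rho$-Hom-$\mathcal{O}$-operator $T$ for the coadjoint representation and transport the induced 3-pre-Hom-$\rho$-Lie structure $\{f,g,h\}=T(ad^*(f,g)T^{-1}(h))$ back to $\mathcal{B}$. The one substantive divergence is your choice of $T$, and it creates a genuine gap in the last step. You define $T$ by the untwisted relation $\omega(T\alpha,g)=\alpha(g)$, i.e.\ $T^{-1}(h)(\cdot)=\omega(h,\cdot)$. Unwinding then gives
\begin{align*}
\omega(\{f,g,h\},\phi(s))=ad^*(f,g)\bigl(T^{-1}(h)\bigr)(\phi(s))=-\rho(f+g,h)\,\omega\bigl(h,[f,g,\phi(s)]_\mathcal{B}\bigr),
\end{align*}
exactly as you acknowledge. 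But converting this into $-\rho(f+g,h)\,\omega(\phi(h),[f,g,s]_\mathcal{B})$ does not follow from the compatibility $\omega(\phi(x),y)=\omega(x,\phi(y))$: that identity yields $\omega(\phi(h),[f,g,s]_\mathcal{B})=\omega(h,\phi([f,g,s]_\mathcal{B}))$, and $\phi([f,g,s]_\mathcal{B})$ equals $[\phi(f),\phi(g),\phi(s)]_\mathcal{B}$ in the multiplicative case, not $[f,g,\phi(s)]_\mathcal{B}$. So your final step does not reach the asserted identity.

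The paper avoids this by building the twist into $T$ from the start: it defines $T$ by $\omega(T\alpha,\phi(g))=\alpha(g)$, equivalently $T^{-1}(h)(\cdot)=\omega(\phi(h),\cdot)=\omega(h,\phi(\cdot))$. With this $T$ the same two-line computation gives
\begin{align*}
\omega(\{f,g,h\},\phi(s))=ad^*(f,g)\bigl(T^{-1}(h)\bigr)(s)=-\rho(f+g,h)\,T^{-1}(h)\bigl([f,g,s]_\mathcal{B}\bigr)=-\rho(f+g,h)\,\omega\bigl(\phi(h),[f,g,s]_\mathcal{B}\bigr),
\end{align*}
which is precisely the stated formula; the twisted $T$ also aligns directly with the cocycle of Definition \ref{a20}, whose terms carry $\phi$ in the second slot. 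Everything else you outline (the intertwining $T\circ\tilde{\phi}=\phi\circ T$, pairing the $\rho$-Hom-$\mathcal{O}$-operator identity against $\phi(f_4)$ to recover the symplectic cocycle, and transporting the structure from $V=\mathcal{B}^*$ to $\mathcal{B}$ via Definition \ref{z12} and the subsequent lemma) goes through verbatim for the twisted $T$, and is in fact more detailed than the paper's own justification, which asserts the operator property without verification. The fix is therefore purely local: replace your defining relation for $T$ by the $\phi$-twisted one.
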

\begin{proof}
Let us define a linear map $T:\mathcal{B}^*\longrightarrow \mathcal{B}$ by  $\omega(\phi (f), g)=\omega(f, \phi (g))=(T^{-1}(f))(g)$, or equivalently, $\omega(T\alpha, \phi(g)) =\alpha(g)$ for all $f,g\in Hg(\mathcal{B})$ and $\alpha\in\mathcal{B}^*$. Since $\omega$ is a symplectic structure, we deduce that $T$ is an invertible $\rho$-$\mathcal{O}$-operator associated to $(\mathcal{B}^*, Ad^*)$. Also, by \eqref{457} and \eqref{460}, there there exists a compatible 3-$\rho$-pre-Lie algebra on $B$ given by $\{f,g,h\} = T (ad^*(f,g)T^{-1}(h))$. So
\begin{align*}
\omega(\{f,g,h\},\phi(s))&=\omega(T (ad^*(f,g)T^{-1}(h)),\phi(s))=ad^*(f,g)T^{-1}(h)(s)\\
&=-\rho(f+g,h)T^{-1}(h)[f,g,s]_{\mathcal{B}}=-\rho(f+g,h)\omega(\phi(h),[f,g,s]_{\mathcal{B}}).
\end{align*}
\end{proof}
\begin{definition}
	$(\mathcal{B}\oplus \mathcal{B}^*, [.,.,.]_{\mathcal{B}\oplus \mathcal{B}^*},\rho,\phi+\phi^*, \omega)$ is called a phase space of the 3-Hom-$\rho$-Lie algebra $\mathcal{B}$, if
	\begin{enumerate}
		\item[i.]
		$(\mathcal{B}\oplus \mathcal{B}^*,[.,.,.]_{\mathcal{B}\oplus \mathcal{B}^*},\rho,\phi+\phi^*)$ is a 3-Hom-$\rho$-Lie algebra,
		\item[ii.]
		$\omega$ is a natural non-degenerate skew-symmetric bilinear form on $\mathcal{B}\oplus \mathcal{B}^*$ given by 
		\begin{equation}\label{112}
		\omega(f+\alpha, g+\beta)=\alpha(g)-\rho(f,g)\beta(f),
		\end{equation}
		such that $(\mathcal{B}\oplus \mathcal{B}^*, [.,.,.]_{\mathcal{B}\oplus \mathcal{B}^*},\rho,\phi+\phi^*, \omega)$ is a symplectic 3-Hom-$\rho$-Lie algebra,
		\item[iii.]
		$(\mathcal{B}, [.,.,.]_\mathcal{B},\rho,\phi)$ and $(\mathcal{B}^*, [.,.,.]_{\mathcal{B}^*},\rho,\phi^*)$ are 3-Hom-$\rho$-Lie subalgebras of $(\mathcal{B}\oplus \mathcal{B}^*, [.,.,.]_{\mathcal{B}\oplus \mathcal{B}^*},\rho,\phi+\phi^*)$.
	\end{enumerate}
\end{definition}
\begin{theorem}
A 3-Hom-$\rho$-Lie algebra has a phase space if and only if it is sub-adjacent to a 3-pre-Hom-$\rho$-Lie algebra.
\end{theorem}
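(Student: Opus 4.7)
The plan is to mirror the classical proof of Theorem~\ref{th.2020} but incorporating the twisting map $\phi$ throughout. Both directions of the previous theorem carry over with only mild modifications, because every ingredient used there has already been upgraded to the Hom setting elsewhere in Section~6: the sub-adjacent 3-Hom-$\rho$-Lie algebra $\mathcal{B}^c_{\rho,\phi}$ of a 3-pre-Hom-$\rho$-Lie algebra, the left multiplication $L$ as a representation with respect to $\beta=\phi$, the semidirect-sum construction from Proposition~\ref{pro 3.14}, and the dual representation $L^*$ with respect to $\widetilde{\phi}(\varrho)=\varrho\circ\phi$.

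For the direction ($\Leftarrow$), start from a 3-pre-Hom-$\rho$-Lie algebra $\mathcal{B}_{\{\},\phi}$. First pass to the 3-Hom-$\rho$-Lie algebra $\mathcal{B}^c_{\rho,\phi}$ and form the semidirect sum $\mathcal{B}^c\oplus\mathcal{B}^*$ via the dual representation $L^*$, equipped with the twisting map $\phi+\phi^*$. Define
\[
\omega(f+\alpha,g+\beta)=\alpha(g)-\rho(f,g)\beta(f),
\]
which is manifestly non-degenerate and $\rho$-skew-symmetric. The main verification is the symplectic identity of Definition~\ref{a20}: one expands each of the four terms $\omega([x_i,x_j,x_k]^{L^*}_{\mathcal{B}^c\oplus\mathcal{B}^*},\,(\phi+\phi^*)(x_\ell))$ using the formula for $L^*$ and the pairing, exactly as in the computation displayed in the proof of Theorem~\ref{th.2020}, but with the occurrences of the $\alpha_i$'s now composed with $\phi$. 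After collecting, the four expressions cancel in pairs by the 3-pre-Hom-$\rho$-Lie axioms; crucially, $\mathcal{B}^c$ is a subalgebra and $\mathcal{B}^*$ is abelian in $\mathcal{B}^c\oplus\mathcal{B}^*$, so the construction gives a legitimate phase space.

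For the direction ($\Rightarrow$), suppose $(\mathcal{B}\oplus\mathcal{B}^*,[.,.,.]_{\mathcal{B}\oplus\mathcal{B}^*},\rho,\phi+\phi^*,\omega)$ is a phase space of $\mathcal{B}_{\rho,\phi}$. By the preceding proposition (the Hom-analogue of Proposition~\ref{ass}), there is a compatible 3-pre-Hom-$\rho$-Lie algebra structure $\{.,.,.\}$ on $\mathcal{B}\oplus\mathcal{B}^*$ determined by
\[
\omega(\{x,y,z\},(\phi+\phi^*)(w))=-\rho(x+y,z)\,\omega((\phi+\phi^*)(z),[x,y,w]_{\mathcal{B}\oplus\mathcal{B}^*}).
\]
The task is to show that $\{.,.,.\}$ restricts to $\mathcal{B}$. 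For $f,g,h\in Hg(\mathcal{B})$ and any $z\in Hg(\mathcal{B})$, use the fact that $\mathcal{B}$ is a Hom-subalgebra, so $[f,g,z]_{\mathcal{B}\oplus\mathcal{B}^*}=[f,g,z]_{\mathcal{B}}\in\mathcal{B}$, and therefore $\omega(\phi(h),[f,g,z]_{\mathcal{B}})=0$ by the explicit form of $\omega$. Hence $\omega(\{f,g,h\},\phi(z))=0$ for all $z\in\mathcal{B}$, which, combined with non-degeneracy of $\omega$ and invertibility of $\phi+\phi^*$ on the pairing, forces $\{f,g,h\}\in\mathcal{B}$. Thus $(\mathcal{B},\{.,.,.\}|_\mathcal{B},\rho,\phi)$ is a 3-pre-Hom-$\rho$-Lie subalgebra whose sub-adjacent 3-Hom-$\rho$-Lie algebra coincides with $\mathcal{B}_{\rho,\phi}$.

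The main obstacle I expect is purely bookkeeping in the forward direction: the symplectic identity becomes a sum of four blocks, each expanding into six terms once $L^*$ and the $\rho+\phi$ conventions are unfolded, so one must track the twist $\phi$ applied to the fourth slot and ensure it matches the $\phi$ already present in the 3-pre-Hom-$\rho$-Lie axioms. No new idea is needed beyond the classical computation; the subtlety is simply to invoke $\phi\circ\{\cdot,\cdot,\cdot\}$ compatibility at the right step so that each of the four cyclic terms carries a matching $\phi$ before cancellation.
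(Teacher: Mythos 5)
Your proposal follows essentially the same route as the paper: the paper's own proof is a one-line deferral to Theorem~\ref{th.2020}, and what you have written out is precisely the adaptation of that classical argument (semidirect sum via $L^*$ with twist $\phi+\phi^*$ in one direction, restriction of the compatible 3-pre-Hom-$\rho$-Lie structure via the isotropy of $\mathcal{B}$ in the other) that the deferral implicitly requires. Your explicit flagging of the need for $\phi$ to act invertibly on the pairing in the converse direction is a detail the paper glosses over, but it does not change the approach.
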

\begin{proof}
By Theorem \ref{th.2020}, it is easy to get the result.
\end{proof}
\begin{lemma}
	Let $(V,\mu,\beta)$ be a representation of $\mathcal{B}_{\rho,\phi}$ and $(\mathcal{B}\oplus \mathcal{B}^*, [.,.,.]^{\mu^*}_{\mathcal{B}\oplus \mathcal{B}^*},\rho,\phi+\phi^{*}, \omega)$ be a phase space of it. Then the relation $\{f,g,h\}= \mu(f,g)h$ constructs a 3-pre-Hom-$\rho$-Lie algebra structure on $\mathcal{B}$ if $\beta=\phi$.
\end{lemma}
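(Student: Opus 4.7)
The plan is to derive the formula $\{f,g,h\}=\mu(f,g)h$ as the restriction to $\mathcal{B}$ of the compatible 3-pre-Hom-$\rho$-Lie bracket that the preceding proposition supplies on the whole phase space, and then to verify the identification by pairing against $\mathcal{B}^*$. The hypothesis $\beta=\phi$ will enter at exactly the point where the dual representation and the phase-space twist need to be identified with $\phi^{*}$.

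First I would invoke the preceding proposition applied to the symplectic 3-Hom-$\rho$-Lie algebra $(\mathcal{B}\oplus\mathcal{B}^{*},[.,.,.]^{\mu^{*}}_{\mathcal{B}\oplus\mathcal{B}^{*}},\rho,\phi+\phi^{*},\omega)$. This produces a compatible 3-pre-Hom-$\rho$-Lie algebra structure $\{.,.,.\}$ on $\mathcal{B}\oplus\mathcal{B}^{*}$ characterized implicitly by
$$\omega(\{x,y,z\},(\phi+\phi^{*})(w))=-\rho(x+y,z)\,\omega((\phi+\phi^{*})(z),[x,y,w]^{\mu^{*}}_{\mathcal{B}\oplus\mathcal{B}^{*}}).$$
Because $(\mathcal{B},[.,.,.]_{\mathcal{B}},\rho,\phi)$ is a 3-Hom-$\rho$-Lie subalgebra of the phase space (item (iii) in the definition of phase space), $\{f,g,h\}$ lies in $\mathcal{B}$ whenever $f,g,h\in\mathcal{B}$, so the restriction of $\{.,.,.\}$ to $\mathcal{B}$ is already a 3-pre-Hom-$\rho$-Lie algebra structure on $\mathcal{B}$. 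It only remains to recognize this restriction as $\mu(f,g)h$.

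Next I would fix $f,g,h\in Hg(\mathcal{B})$ and test the characterizing identity against an arbitrary $\alpha\in Hg(\mathcal{B}^{*})$ by taking $w=\alpha$. Using the formula \eqref{112} for $\omega$ together with $(\phi+\phi^{*})(\alpha)=\alpha\circ\phi$, the left-hand side reduces, up to a scalar product of $\rho$-factors, to $\alpha(\phi\{f,g,h\})$. On the right-hand side, the bracket $[f,g,\alpha]^{\mu^{*}}_{\mathcal{B}\oplus\mathcal{B}^{*}}$ equals $\mu^{*}(f,g)\alpha$ by the explicit formula \eqref{dd} in the Hom setting, and one more application of \eqref{112} together with $\mu^{*}(f,g)\alpha=-\rho(f+g,\alpha)\,\alpha\circ\mu(f,g)$ reduces the right-hand side, up to the same scalar product of $\rho$-factors, to $\alpha(\phi\,\mu(f,g)h)$. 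The hypothesis $\beta=\phi$ enters here in an essential way: it is precisely the assumption that makes $\phi^{*}\alpha=\alpha\circ\beta=\alpha\circ\phi$ the correct dual twist, so that $(\mathcal{B}^{*},\mu^{*},\phi^{*})$ is indeed the dual representation of $(V,\mu,\beta)$ used in the phase-space construction. Cancelling the common cocycle prefactor and varying $\alpha$ over $\mathcal{B}^{*}$, non-degeneracy of the natural pairing yields $\{f,g,h\}=\mu(f,g)h$.

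The principal obstacle is purely bookkeeping of the two-cycle $\rho$: one must confirm that the prefactors produced on both sides (from \eqref{112}, from the $\rho$-skew-symmetry of $\omega$, and from the coadjoint-type formula for $\mu^{*}$) truly coincide, using repeatedly $\rho(a+b,c)=\rho(a,c)\rho(b,c)$ and $\rho(a,b)=\rho(b,a)^{-1}$. Once the cocycle factors are matched and cancelled, the argument is formally identical to the classical case treated at the end of Section~4.1, with $\beta=\phi$ supplying exactly the compatibility needed for the Hom twist on $\mathcal{B}^{*}$ to restrict correctly.
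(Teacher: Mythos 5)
Your proposal is correct and follows essentially the same route as the paper: the paper's proof likewise pairs $\{f,g,h\}$ against an arbitrary $\alpha\in Hg(\mathcal{B}^*)$ via $\omega$, uses the defining identity of the compatible 3-pre-Hom-$\rho$-Lie structure together with $[f,g,\alpha]_{\mathcal{B}\oplus\mathcal{B}^*}=\mu^*(f,g)\alpha$ and the formula \eqref{112}, and concludes $\rho(f+g+h,\alpha)\,\alpha\circ\phi(\{f,g,h\})=\rho(f+g+h,\alpha)\,\alpha\circ\phi(\mu(f,g)h)$ by matching the $\rho$-prefactors. The only cosmetic difference is that you make explicit the preliminary step that $\{f,g,h\}$ lands in $\mathcal{B}$ (which really uses the Lagrangian-subspace argument from Theorem \ref{th.2020} rather than item (iii) alone), whereas the paper leaves this implicit.
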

\begin{proof}
Assume that $f,g,h\in Hg(\mathcal{B})$ and $\alpha\in Hg(\mathcal{B}^*)$. Then, we have
\begin{align*}
\rho(f+g+h,\alpha)\alpha\circ\phi(\{f,g,h\})&=-\omega(\{f,g,h\},\alpha\circ\phi)=\rho(f+g,h)\omega(\phi(h),[f,g,\alpha]_{\mathcal{B}\oplus \mathcal{B}^*})\\
&=\rho(f+g,h)\omega(\phi(h),\mu^*(f,g)\alpha)\\
&=-\rho(f+g,h)\rho(h,f+g+\alpha)\mu^*(f,g)\alpha(\phi(h))\\
&= \rho(f+g+h,\alpha)\alpha\circ\phi\mu(f,g)h.
\end{align*}
\end{proof}
{\bf Data Availability Statements.} The data that support the findings of this study are available from the corresponding author upon reasonable request.
%%%%%%%%%%%%%%%%%%%%%%%%%%%%%%%%%%%%%%%%%%%%
%\section{Preliminaries}
% % % % % % % % % % % % % % % % % % % % % % % % % % % % % % % % % % % % % % % % % % % % % % 
%*******************************************************************************************
\bigskip \addcontentsline{toc}{section}{References}

%%%%%%%%%%%%%%%%%%%%%%%%%%%%%%%%%%%%
%%%%%%%%%%%%%%%%%%%%%%%%%%%%%%%%%%%%%%%
\end{document}